\tikzset{%
	symbol/.style={%
		draw=none,
		every to/.append style={%
			edge node={node [sloped, allow upside down, auto=false]{$#1$}}}
	}
}
\newtheorem{Theorem}{Theorem}
\newtheorem{proposition}[Theorem]{Proposition}
\newtheorem{lemma}[Theorem]{Lemma}
\newtheorem{corollary}[Theorem]{Corollary}
\newtheorem{thmx}{Theorem}
\newtheorem{propx}[thmx]{Proposition}
\newtheorem{corx}[thmx]{Corollary}
\theoremstyle{definition}
\newtheorem{remark}[Theorem]{Remark}
\newtheorem{definition}[Theorem]{Definition}
\DeclareMathOperator{\Ascr}{\mathscr{A}}
\DeclareMathOperator{\Cscr}{\mathscr{C}}
\DeclareMathOperator{\Fscr}{\mathscr{F}}
\DeclareMathOperator{\Gscr}{\mathscr{G}}
\DeclareMathOperator{\Pscr}{\mathscr{P}}
\DeclareMathOperator{\Tscr}{\mathscr{T}}
\DeclareMathOperator{\Fbb}{\mathbb{F}}
\DeclareMathOperator{\id}{id}
\DeclareMathOperator{\C}{\mathbb{C}}
\DeclareMathOperator{\Z}{\mathbb{Z}}
\DeclareMathOperator{\N}{\mathbb{N}}
\DeclareMathOperator{\KVect}{\mathnormal{K}-\mathbf{Vect}}
\DeclareMathOperator{\RMod}{\mathnormal{R}-\mathbf{Mod}}
\DeclareMathOperator{\Q}{\mathbb{Q}}
\DeclareMathOperator{\Ext}{Ext}
\DeclareMathOperator{\op}{op}
\DeclareMathOperator{\Per}{\mathbf{Perv}}
\DeclareMathOperator{\incl}{incl}
\DeclareMathOperator{\Sf}{\mathbf{Sf}}
\DeclareMathOperator{\Ch}{\mathbf{Ch}}
\DeclareMathOperator{\fCat}{\mathfrak{Cat}}
\DeclareMathOperator{\SfResl}{\mathbf{SfResl}}
\DeclareMathOperator{\quo}{\mathsf{quot}}
\DeclareMathOperator{\PC}{\mathbf{PC}}
\DeclareMathOperator{\Var}{\mathbf{Var}}
\DeclareMathOperator{\fil}{fil}
\DeclareMathOperator{\rea}{real}
\let\epsilon\varepsilon
\newcommand{\Dbeqstack}[1]{\mathnormal{D}^{\mathnormal{b}}_{\text{eq}}(\underline{[\mathnormal{G} \backslash \mathnormal{X}]}_{\bullet};\overline{\Q}_{\ell})}
\DeclareMathOperator{\of}{\overline{\mathnormal{f}}}
\DeclareMathOperator{\Bicat}{\mathsf{Bicat}}
\DeclareMathOperator{\One}{\mathbbm{1}}
\DeclareMathOperator{\cnst}{cnst}
\DeclareMathOperator{\gr}{gr}
\let\epsilon\varepsilon
\newcommand{\quot}[2]{\,_{#2}{#1}}
\newcommand{\ul}[1]{\underline{{#1}}}
\numberwithin{Theorem}{section}
\numberwithin{equation}{section}
\title{On the Equivariant Derived Category of Perverse Sheaves}
\author{Geoff Vooys}
\date{\today}
\begin{document}

\begin{abstract}
In this paper we extend Beilinson's realization formalism for triangulated categories and filtered triangulated categories to a pseudofunctorial and pseudonatural setting. As a consequence we prove an equivariant version of Beilinson's Theorem: for any algebraic group $G$ over an algebraically closed field $K$ and for any $G$-variety $X$, there is an equivalence of categories $D_G^b(X; \overline{\mathbb{Q}}_{\ell}) \simeq D_G^b(\mathbf{Perv}(X;\overline{\mathbb{Q}}_{\ell}))$ where $\ell$ is an integer prime coprime to the characteristic of $K$. We also show that the equivariant analogues of the other non-$D$-module aspects of Beilinson's Theorem hold in the equivariant case.
\end{abstract}

\subjclass{Primary 14A30, 18N10; Secondary 18G15, 14F08, 14F43, 14M17}
\keywords{Pseudocones, Pseudolimit, Descent, Derived Descent, Equivariant Descent, Equivariant Derived Category, Equivariant Homological Algebra, Equivariant Sheaves, Equivariant Perverse Sheaves, Beilinson's Theorem, Equivariant Derived Category of Perverse Sheaves}

\maketitle
\tableofcontents
	
\section{Introduction}

In the 1980's Beilinson proved in \cite{Beilinson} a result of crucial importance in both representation theory and arithmetic algebraic geometry: there is an equivalence of categories
\[
D^b(\Per(X;\overline{\Q}_{\ell})) \simeq D_c^b(X;\overline{\Q}_{\ell})
\]
for all separated finite type schemes over an algebraically closed field $K$ with $\ell$ coprime to the characteristic of $K$. This celebrated result has had myriad ramifications and led to applications which allow us to compute extensions of perverse sheaves in the derived category as opposed to in the derived category of perverse sheaves, which is often a crucial step in actually being able to calculate things because we have access to the usual (derived) functors which are well-understood. This is also important for $K$-theoretic results, as it gives rise to an isomorphism of Grothendieck groups $\mathsf{K}_0D^b(\Per(X)) \cong \mathsf{K}_0D^b(X)$, which is helpful in testing conjectures which appear in representation theory. 

As representation theory and the Langlands Programme have proceeded over time, there has been a need to develop more of the theory of equivariant derived categories and equivariant algebraic geometry which is applicable to the study of $p$-adic groups. In \cite{CFMMX} the authors introduced a microlocal approach towards the study of A-packets motivated by \cite{ABV} which use, in fundamental ways, the theory of equivariant derived categories, equivariant perverse sheaves, and equivariant nearby and vanishing cycles. These techniques have been further used in \cite{cunningham2023proof} and related work. These insights highlight the importance of the equivariant derived category and equivariant perverse sheaves in the study of $p$-adic groups and the Local Langlands Programme for $p$-adic groups.

Alternatively, the study of much of the pure homological algebra which goes into proving Beilinson's Theorem \cite[Theorem 1.3]{Beilinson} is of independent interest. The techniques used require a careful manipulation of triangulated and truncation structures, and so admit applications beyond algebro-geometric and representation-theoretic circumstances. It is shown in \cite{Monograph}, with an eye towards these further applications, that much of the category theory which equivariant descent comes down to the language of pseudofunctors, pseudocones, and higher-category theory. In fact, the equivariant derived category $D_G^b(X)$ is the pseudolimit in $\fCat$ of a certain pseudofunctor. The various properties of the equivariant derived category we care about in practice (such as its triangulation, standard $t$-structure, perverse $t$-structure, six functor formalism, etc.\@) follow directly from various properties of the pseudofunctor indexing the diagram for which $D_G^b(X)$ is a pseudolimit. Because the pseudocone and pseudolimit language is so fruitful in working with the equivariant derived category, we show this perspective is well-suited towards generalizing the filtered triangulated categories of \cite{Beilinson} to certain classes of pseudofunctors and also towards using this general framework to study equivariant algebraic geometry. It is also worth noting that this pseudofunctorial approach is an alternative to the averaging functors used in \cite{PramodBook}, which showed that this equivalence occurs when $G$ is a finite complex algebraic group and $X$ is a $G$-variety over $\C$.

\subsection{Main Results and Structure of the Paper}

The main result of the paper is Theorem \ref{Thm: Equivariant Beilinson} below, which shows many cases in which the equivariant bounded derived category of perverse sheaves is equivalent to the equivariant bounded derived category.
\begin{thmx}[Cf.\@ Theorem \ref{Thm: Equivariant Beilinson}]
The following equivalences of categories hold:
\begin{enumerate}
	\item Let $K$ be a field, $G$ a smooth algebraic group over $K$, and $X$ a left $G$-variety. If $R$ is any finite ring with characteristic coprime to the characteristic of $K$ then there is an equivalence of categories
	\[
	D^b_G(X;\RMod) \simeq D^b_G(\Per(X;\RMod)
	\]
	where $\Per(X;\RMod)$ denotes the category of perverse sheaves with coefficients in $\RMod$, $D^b_G(\Per(X;\RMod))$ is the equivariant derived category of perverse sheaves of $R$-modules.
	\item Let $K$ be an algebraically closed field, $G$ an algebraic group over $K$, and $X$ a left $G$-variety. Then
	\[
	D_G^b(X;\overline{\Q}_{\ell}) \simeq D_G^b(\Per(X;\overline{\Q}_{\ell})).
	\]
	\item If $G$ is a complex algebraic group and $X$ is a left $G$-variety over $\C$ then for any field $K$ there is an equivalence
	\[
	D_G^b(X(\C),\KVect) \simeq D_G^b(\Per(X(\C),\KVect)).
	\]
	\item If $K = \Fbb_q$ is a finite field, $G$ is an algebraic group over $K$, and $X$ is a left $G$-variety then for any prime $\ell$ coprime to $q$,
	\[
	D_{G,m}^b(X,\overline{\Q}_{\ell}) \simeq D_G^b(\Per_{m}(X;\overline{\Q}_{\ell})),
	\]
	where $D_{G,m}^b(X,\overline{\Q}_{\ell})$ is the equivariant derived category of mixed $\ell$-adic sheaves (and similarly in the perverse case).
\end{enumerate}
\end{thmx}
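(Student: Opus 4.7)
The plan is to leverage the pseudofunctorial/pseudocone description of the equivariant derived category together with the pseudonatural version of Beilinson's realization formalism (extended earlier in the paper). Roughly, $D^b_G(X;-)$ and $D^b_G(\Per(X;-))$ should both be presented as pseudolimits of essentially the same shape, whose pseudofunctors agree at each vertex by the classical (non-equivariant) Beilinson Theorem applied to an appropriate acyclic resolution of $X$. The whole point of the earlier pseudofunctorial realization technology is to upgrade those vertexwise equivalences into a single pseudonatural equivalence between pseudofunctors, whence a pseudolimit-level equivalence.

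Concretely, I would first fix the smooth pseudofunctor indexing diagram $\Gamma \mapsto D^b(\XGamma;-)$ of which $D^b_G(X;-)$ is the pseudolimit, as discussed in the introduction and developed in \cite{Monograph}. In parallel, I would build the pseudofunctor $\Gamma \mapsto D^b(\Per(\XGamma;-))$ by showing that the smooth pullbacks $\of^\ast$ appearing in the equivariant descent diagram restrict suitably on perverse sheaves (up to a shift by the relative dimension), so that one has well-defined transition functors on the derived categories of perverse sheaves. The $G$-equivariant derived category of perverse sheaves $D^b_G(\Per(X;-))$ is then, by definition/construction, the pseudolimit of this second pseudofunctor.

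Next, I would invoke the pseudofunctorial Beilinson realization formalism from the main technical sections of the paper to produce, at each $\Gamma$, a realization functor $\rea_\Gamma : D^b(\Per(\XGamma;-)) \to D^b(\XGamma;-)$, and to check that these assemble into a pseudonatural transformation of pseudofunctors. Each $\rea_\Gamma$ is an equivalence by the classical Beilinson Theorem in each of the four coefficient settings: finite-ring coefficients prime to $\Char K$ for (1), $\overline{\Q}_\ell$ in the algebraically closed case for (2), constructible sheaves of $K$-vector spaces on the complex-analytic site for (3), and mixed $\ell$-adic sheaves over $\Fbb_q$ for (4). (Each of these is the appropriate input: Beilinson \cite{Beilinson} for (2), Beĭlinson--Bernstein--Deligne-type variants for (1) and (4), and the topological Beilinson theorem for (3); in each case one must confirm that $\XGamma$ is of the type to which the theorem applies, which follows from $X$ being a $G$-variety and the resolutions $\Gamma$ being smooth of finite type.) Finally, since pseudolimits in $\fCat$ preserve pseudonatural equivalences, taking pseudolimits yields the desired equivalences of equivariant categories in each of (1)--(4).

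The main obstacle is the pseudonaturality: one must verify that $\rea_\Gamma$ commutes, up to specified and coherent invertible $2$-cells, with the smooth pullback functors indexing the descent diagram, and that these $2$-cells satisfy the hexagon/associativity coherences required of a pseudonatural transformation. This is exactly what the pseudofunctorial upgrade of Beilinson's filtered-triangulated formalism is designed to give, since $\rea$ is built from $f$-categories whose behavior under smooth pullback is controlled. Once pseudonaturality is in hand and each $\rea_\Gamma$ is a vertexwise equivalence, the passage to pseudolimits is formal and yields all four equivalences uniformly.
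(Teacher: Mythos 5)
Your proposal takes essentially the same approach as the paper: build the realization via the pseudofunctorial extension of Beilinson's filtered-triangulated formalism, observe that each vertexwise component is an equivalence by the classical Beilinson Theorem, and conclude by passing to pseudocones. The one thing you leave implicit — and that the paper spells out — is that "checking pseudonaturality of $\rea$" is packaged as verifying that $D^b(\Per(G\backslash(-)))$ is an $f$-pseudofunctor over $D^b_c(G\backslash(-))$ (fibrewise filteredness from \cite[Example A.2]{Beilinson} plus the smoothness of the descent maps making each ${}^p(G\backslash(f\times\id_X))^\ast$ an $f$-functor), which is precisely the hypothesis of Theorems \ref{Thm: Section Pseudocone Realizations: Pseudofunctor Realization} and \ref{Thm: Section Pseudocone Realization: Pseudocone Realization}.
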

\begin{corx}[Cf.\@ Corollary \ref{Cor: Section Equiv Beilin: EDC is EDPer for all var}]
Let $K$ be a field, $G$ a smooth algebraic group over $K$, and let $X$ be a left $G$-variety. Then there is an equivalence of categories
\[
D_G^b(X;\overline{\Q}_{\ell}) \simeq D^b_G\left(\Per(X;\overline{\Q}_{\ell})\right).
\]
\end{corx}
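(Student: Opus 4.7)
The plan is to deduce this corollary from part~(1) of Theorem~A via the standard Ekedahl/Deligne-style construction of $\overline{\mathbb{Q}}_\ell$-coefficients as an iterated pseudolimit-then-pseudocolimit of finite-coefficient categories; the extra content beyond Theorem~A~(1) is purely the bookkeeping required to commute that construction with the equivalences produced for each finite ring.

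First I would apply part~(1) of Theorem~A to the family of finite rings $R_n := \mathbb{Z}/\ell^n\mathbb{Z}$ for $n \geq 1$. Since $\ell$ is necessarily coprime to $\operatorname{char}(K)$ (otherwise the $\overline{\mathbb{Q}}_\ell$-theory is not defined), $\operatorname{char}(R_n) = \ell^n$ is also coprime to $\operatorname{char}(K)$, and Theorem~A~(1) applies, yielding for each $n$ an equivalence
\[
\Phi_n \colon D_G^b(X; R_n\text{-}\mathbf{Mod}) \xrightarrow{\,\sim\,} D_G^b(\Per(X; R_n\text{-}\mathbf{Mod})).
\]

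Next, I would promote the collection $\{\Phi_n\}_{n \in \mathbb{N}}$ to a pseudonatural equivalence between the two pseudofunctors $\mathbb{N}^{\op} \to \fCat$ given by $n \mapsto D_G^b(X; R_n\text{-}\mathbf{Mod})$ and $n \mapsto D_G^b(\Per(X; R_n\text{-}\mathbf{Mod}))$, with transition pseudofunctors induced by the reductions $R_{n+1} \twoheadrightarrow R_n$. This is where the pseudofunctorial extension of Beilinson's realization formalism developed in the main body of the paper does the real work: since each $\Phi_n$ is produced by the pseudonatural realization construction, it carries the coherence data of a pseudonatural transformation automatically, and the diagrams comparing $\Phi_{n+1}$ with $\Phi_n$ along $R_{n+1} \twoheadrightarrow R_n$ commute up to the coherent pseudonatural isomorphisms supplied by that formalism. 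Taking the pseudolimit in $\fCat$ of both pseudofunctors then yields
\[
D_G^b(X; \mathbb{Z}_\ell) \simeq D_G^b(\Per(X; \mathbb{Z}_\ell)),
\]
since both sides are by definition such pseudolimits.

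To conclude, I would invert $\ell$ to pass to $\mathbb{Q}_\ell$-coefficients and then take a filtered pseudocolimit over finite subextensions $E/\mathbb{Q}_\ell$ of $\overline{\mathbb{Q}}_\ell$ along scalar extension. Both Verdier localization and filtered 2-colimits in $\fCat$ preserve equivalences, so one obtains
\[
D_G^b(X; \overline{\mathbb{Q}}_\ell) \simeq D_G^b(\Per(X; \overline{\mathbb{Q}}_\ell))
\]
as claimed. The main obstacle is the pseudonaturality check in the pseudolimit step: one must verify that the $\Phi_n$ genuinely assemble into a pseudonatural transformation of $\mathbb{N}^{\op}$-indexed pseudofunctors rather than merely existing as objectwise equivalences. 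This is exactly the sort of coherence the pseudofunctorial realization formalism of this paper is designed to supply, and once it is read off from the construction of the $\Phi_n$, the remainder of the argument reduces to formal manipulations of pseudolimits and pseudocolimits in $\fCat$.
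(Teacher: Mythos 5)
Your approach is genuinely different from the paper's, and it has two real gaps that you gesture at but don't close.

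The paper does not reduce to finite coefficients at all. It states this corollary as a consequence of rerunning the proof of its Theorem \ref{Thm: Equivariant Beilinson} part~(2) directly with the pseudofunctor of $\overline{\mathbb{Q}}_\ell$-coefficient categories over an arbitrary field $K$, using Ekedahl's adic formalism to make sense of $D_c^b(-;\overline{\Q}_\ell)$ and $\Per(-;\overline{\Q}_\ell)$ for varieties over non-algebraically-closed fields and then invoking Beilinson's theorem object-locally in that setting. In other words, the paper applies the filtered-triangulated-pseudofunctor machinery once, to the $\overline{\Q}_\ell$-fibers themselves, exactly as in the proof of Theorem \ref{Thm: Equivariant Beilinson}. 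Your route instead applies Theorem~A(1) separately at each $R_n = \mathbb{Z}/\ell^n$ and then tries to reassemble $\overline{\Q}_\ell$-coefficients by a pseudolimit, a Verdier localization, and a filtered pseudocolimit.

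The first gap is the one you flag: you claim the objectwise equivalences $\Phi_n$ ``carry the coherence data of a pseudonatural transformation automatically.'' They don't, at least not from anything proved in the paper. Each $\Phi_n$ is constructed by the realization formalism as a pseudonatural transformation over $\SfResl_G(X)^{\op}$, but nothing in that construction makes the collection $\{\Phi_n\}_n$ compatible along the reduction functors $R_{n+1}\text{-}\mathbf{Mod} \to R_n\text{-}\mathbf{Mod}$. Establishing that compatibility would require upgrading the whole filtered/truncated pseudofunctor apparatus to a pseudofunctor on (roughly) $\mathbb{N}^{\op} \times \SfResl_G(X)^{\op}$, which is a nontrivial additional layer the paper does not build.

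The second gap you do not mention at all, and it is the more serious one. The paper defines $D_G^b(X;\overline{\Q}_\ell)$ as $\PC$ of the pseudofunctor $\Gamma \times X \mapsto D_c^b(G\backslash(\Gamma\times X);\overline{\Q}_\ell)$; the Ekedahl/Deligne limit-localize-colimit construction happens inside each fiber, not at the level of the equivariant category. For your argument to produce the category in the statement of the corollary you need the pseudolimit $\PC(-)$ over $\SfResl_G(X)^{\op}$ to commute with the $\mathbb{N}^{\op}$-pseudolimit (fine — limits commute with limits), with Verdier localization at $\ell$, and with the filtered 2-colimit over finite subextensions $E/\mathbb{Q}_\ell$. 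The last two do \emph{not} commute with an arbitrary pseudolimit in general, and $\SfResl_G(X)^{\op}$ is not a shape for which such commutation is automatic. Without that interchange your chain of equivalences terminates at a category that is not, on the face of it, $D_G^b(X;\overline{\Q}_\ell)$ in the sense of Definition \ref{Defn: Section Recall: EDC}. The paper's direct approach sidesteps both issues entirely, which is precisely why it is the cleaner route here.
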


We prove this primarily by extending the language of Beilinson's filtered triangulated categories and realization functor to pseudofunctors and pseudocone categories. In Section \ref{Section: } we give a review of pseudocone categories and highlight some terminology, results, and other background material used throughout this paper. Afterwards, in Section \ref{Section: Pseudocone Realization Functors}, we begin the meat of our work and develop the extension of Beilinson's realization functors to the pseudofunctor setting. This long section culminates in the two main theorems below.
\begin{thmx}[Cf.\@ Theorem \ref{Thm: Section Pseudocone Realizations: Pseudofunctor Realization}]
Let $F:\Cscr^{\op} \to \fCat$ be a filtered triangulated pseudofunctor over triangulated pseudofunctor $T:\Cscr^{\op} \to \fCat$ and assume that $T$ and $F$ are truncated with compatible $t$-structures. Then if $Q$ is the quasi-isomorphism subpseudofunctor of $\Ch^b(T^{\heartsuit})$, there is a unique pseudonatural transformation $\ul{\rea}:Q^{-1}\Ch^b(T^{\heartsuit}) \to T$ for which the diagram
\[
\begin{tikzcd}
	\Ch^b(T^{\heartsuit}) \ar[rr]{}{\widetilde{\rea}} \ar[dr, swap]{}{\ul{\lambda}_Q} & & T \\
	& Q^{-1}\Ch^b\left(T^{\heartsuit}\right) \ar[ur, swap]{}{\ul{\rea}}
\end{tikzcd}
\]
commutes in $\Bicat(\Cscr^{\op},\fCat)$. Furthermore, each object functor $\quot{\ul{\rea}}{X}$ is $t$-exact for all obects $X$ of $\Cscr$.
\end{thmx}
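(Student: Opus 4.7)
The plan is to reduce to the classical (non-pseudofunctorial) Beilinson realization theorem object by object, and then assemble the resulting functors into a pseudonatural transformation by invoking a universal property of the pseudofunctorial localization $\ul{\lambda}_Q$. For each object $X$ of $\Cscr$, the hypotheses on $F$ and $T$ specialize to the fact that $F(X)$ is a filtered triangulated category over the truncated triangulated category $T(X)$ with compatible $t$-structures. Classical Beilinson realization then supplies a $t$-exact functor $\quot{\widetilde{\rea}}{X} : \Ch^b(T(X)^{\heartsuit}) \to T(X)$ which inverts every quasi-isomorphism in the heart, and hence factors uniquely through the Verdier localization as a $t$-exact functor $\quot{\ul{\rea}}{X} : Q(X)^{-1}\Ch^b(T(X)^{\heartsuit}) \to T(X)$. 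This handles the object-level component.

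For the pseudonatural structure, I would appeal to the fact (which should be assembled from the pseudofunctorial version of the Gabriel--Zisman calculus developed earlier in the paper) that $\ul{\lambda}_Q : \Ch^b(T^{\heartsuit}) \to Q^{-1}\Ch^b(T^{\heartsuit})$ is the universal pseudonatural transformation inverting the $Q$-cells in the pseudofunctor bicategory $\Bicat(\Cscr^{\op},\fCat)$. Since $\widetilde{\rea}$ is already given as a pseudonatural transformation by assumption, and since we have just verified that each component $\quot{\widetilde{\rea}}{X}$ inverts $Q(X)$-morphisms, the universal property produces a unique pseudonatural transformation $\ul{\rea} : Q^{-1}\Ch^b(T^{\heartsuit}) \to T$ with $\ul{\rea} \circ \ul{\lambda}_Q \cong \widetilde{\rea}$, which gives both the required commuting triangle and the uniqueness.

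The main obstacle, and the step I expect will need the most care, is the construction of the pseudonaturality constraint 2-cells of $\ul{\rea}$ from those of $\widetilde{\rea}$. Concretely, for each $f : X \to Y$ in $\Cscr$ one must produce a natural isomorphism
\[
\quot{\ul{\rea}}{X} \circ (Q^{-1}\Ch^b(T^{\heartsuit}))(f) \;\Rightarrow\; T(f) \circ \quot{\ul{\rea}}{Y}
\]
and check that these satisfy the associativity and unit coherence pentagons. The right approach is to observe that both sides, when precomposed with $\quot{\ul{\lambda}_Q}{Y}$, become canonically isomorphic via the constraint cells of $\widetilde{\rea}$ together with the structure cells of $\ul{\lambda}_Q$; the universal property of the componentwise localization $Q(Y)^{-1}\Ch^b(T(Y)^{\heartsuit})$ then forces a unique natural isomorphism downstairs, and the coherence axioms for $\ul{\rea}$ are extracted from those of $\widetilde{\rea}$ and $\ul{\lambda}_Q$ by uniqueness of factorizations through the localization.

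Finally, $t$-exactness of each $\quot{\ul{\rea}}{X}$ is inherited directly: classical Beilinson gives the $t$-exactness of $\quot{\widetilde{\rea}}{X}$, and since $\quot{\ul{\lambda}_Q}{X}$ is $t$-exact and essentially surjective on the heart, the induced functor $\quot{\ul{\rea}}{X}$ is $t$-exact as well. Uniqueness of the entire pseudonatural transformation is then an immediate consequence of the universal property invoked above.
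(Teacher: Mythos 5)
Your approach is essentially the same as the paper's: build $\widetilde{\rea}$ pseudonaturally (from $I$, $\incl_{F^{\heartsuit}}$, and $\omega$), observe that each $\quot{\widetilde{\rea}}{X}$ inverts quasi-isomorphisms, and then factor uniquely through the localization pseudofunctor via a universal property, with $t$-exactness inherited from the classical realization functor of Beilinson [Section A.6]. The only cosmetic difference is that the paper invokes a black-boxed universal property of localization pseudofunctors from the Monograph reference (Theorem 4.1.29) rather than re-deriving the pseudonaturality constraint $2$-cells and their coherence, which you sketch explicitly; your sketch of that step is the content of the cited result.
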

\begin{thmx}[Cf.\@ Theorem \ref{Thm: Section Pseudocone Realization: Pseudocone Realization}]
Let $F:\Cscr^{\op} \to \fCat$ be a filtered triangulated pseudofunctor over triangulated pseudofunctor $T:\Cscr^{\op} \to \fCat$ and assume that $T$ and $F$ are truncated with compatible $t$-structures. Then if $Q$ is the quasi-isomorphism subpseudofunctor of $\Ch^b(T^{\heartsuit})$, there is a unique pseudonatural transformation $\ul{\rea}:Q^{-1}\Ch^b(T^{\heartsuit}) \to T$ for which the diagram
\[
\begin{tikzcd}
	\PC(\Ch^b(T^{\heartsuit})) \ar[rr]{}{\PC(\widetilde{\rea})} \ar[dr, swap]{}{\PC(\ul{\lambda}_Q)} & & \PC(T) \\
	& \PC(D^b\left(T^{\heartsuit}\right)) \ar[ur, swap]{}{\PC(\ul{\rea})}
\end{tikzcd}
\]
commutes in $\Bicat(\Cscr^{\op},\fCat)$ with $\PC(\ul{\rea})$ $t$-exact. Furthermore, for any obejct $A$ of $\PC(\Ch^b(T^{\heartsuit}))$,
\[
\PC(\ul{\rea})(A) = \left\lbrace \quot{\ul{\rea}}{X}(\quot{A}{X}) \; : \; X \in \Cscr_0\right\rbrace
\]
where $\quot{\ul{\rea}}{X}$ is the realization $D^b(T(X)^{\heartsuit}) \to T(X).$
\end{thmx}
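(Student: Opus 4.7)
The plan is to apply the (pseudo)functor $\PC : \Bicat(\Cscr^{\op}, \fCat) \to \fCat$ to the commuting triangle produced by Theorem \ref{Thm: Section Pseudocone Realizations: Pseudofunctor Realization}. First I would invoke that theorem to obtain the unique pseudonatural transformation $\ul{\rea}: Q^{-1}\Ch^b(T^{\heartsuit}) \to T$ for which $\widetilde{\rea} = \ul{\rea} \circ \ul{\lambda}_Q$ holds in $\Bicat(\Cscr^{\op}, \fCat)$. Applying $\PC$ to this identity and using that $\PC$ respects composition of 1-cells yields $\PC(\widetilde{\rea}) = \PC(\ul{\rea}) \circ \PC(\ul{\lambda}_Q)$, which is exactly the commutativity of the displayed triangle in $\fCat$.

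The componentwise formula $\PC(\ul{\rea})(A) = \lbrace \quot{\ul{\rea}}{X}(\quot{A}{X}) : X \in \Cscr_0\rbrace$ is then a direct unwinding of the action of $\PC$ on 1-cells: a pseudonatural transformation $\ul{\alpha}: F \Rightarrow G$ is sent to the functor whose value on a pseudocone $A = \lbrace \quot{A}{X}\rbrace_X$ is the pseudocone with $X$-component $\quot{\alpha}{X}(\quot{A}{X})$, with structural 2-cells obtained by pasting those of $A$ with the naturality data of $\ul{\alpha}$. Uniqueness of $\PC(\ul{\rea})$ follows from uniqueness of $\ul{\rea}$: any functor $R: \PC(Q^{-1}\Ch^b(T^{\heartsuit})) \to \PC(T)$ making the triangle commute is determined on each component by postcomposition with the projection pseudocones $\PC(T) \to T(X)$, and on each component must agree with the uniquely determined $\quot{\ul{\rea}}{X}$ from the previous theorem.

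Finally, $t$-exactness of $\PC(\ul{\rea})$ is verified componentwise. Under the hypothesis that $T$ and $F$ are truncated with compatible $t$-structures, the pseudocone category $\PC(T)$ inherits a $t$-structure whose aisles are detected componentwise, i.e., $A \in \PC(T)^{\leq 0}$ iff $\quot{A}{X} \in T(X)^{\leq 0}$ for each $X$, and symmetrically for $\PC(T)^{\geq 0}$; the same applies to $\PC(Q^{-1}\Ch^b(T^{\heartsuit}))$. Since each $\quot{\ul{\rea}}{X}$ is $t$-exact by the previous theorem, the componentwise formula for $\PC(\ul{\rea})$ immediately shows that it sends the respective aisles into one another.

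The main obstacle I anticipate is bookkeeping around the $t$-structure on $\PC(T)$, namely checking that componentwise truncation indeed assembles into a well-defined $t$-structure on the pseudocone category compatible with the pseudonaturality data, and that $\PC(\ul{\rea})$ preserves this structure coherently. The formal triangle commutativity is nearly automatic from 2-functoriality of $\PC$; the real content is that the compatible-$t$-structure hypotheses on $T$ and $F$ survive passage to $\PC$ so that componentwise $t$-exactness translates to $t$-exactness of the assembled functor on pseudocones.
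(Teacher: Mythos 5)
Your proposal is essentially the paper's own argument: apply the strict $2$-functor $\PC$ to the commuting triangle furnished by the preceding theorem, derive the componentwise formula for $\PC(\ul{\rea})$ from how $\PC$ acts on pseudonatural transformations, and get $t$-exactness from $t$-exactness of each $\quot{\ul{\rea}}{X}$ together with the fact that the $t$-structure on a pseudocone of a truncated pseudofunctor is detected componentwise. The paper additionally makes explicit the bridge that the student glosses over: the diagram in the statement has $\PC(D^b(T^{\heartsuit}))$ where the prior theorem produced $Q^{-1}\Ch^b(T^{\heartsuit})$, and the paper's proof opens by noting that the canonical $X$-wise isomorphisms $\quot{Q^{-1}}{X}\Ch^b(T(X)^{\heartsuit}) \cong D^b(T(X)^{\heartsuit})$ assemble into a pseudonatural isomorphism before $\PC$ is applied; this is a small but real bookkeeping step that should be included. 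One other caution: the uniqueness clause in the theorem is the uniqueness of $\ul{\rea}$ as a pseudonatural transformation, already secured by the previous theorem, and your stronger phrasing that \emph{any} functor $R$ making the $\PC$-triangle commute must equal $\PC(\ul{\rea})$ would require an argument that such an $R$ is determined componentwise and is of pseudocone type, which isn't actually needed for the statement.
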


Finally we conclude the paper with our main theorem and corollary before giving a result on extension groups of equivariant perverse sheaves.

\begin{propx}[Cf.\@ Proposition \ref{Prop: The Exty boi}]
Let $\Fscr$ and $\Gscr$ be two equivariant perverse sehaves and let $n \in \Z$. Then there is an isomorphism
\[
\Ext^n_{\Per_G(X;\overline{\Q}_{\ell})}(\Fscr,\Gscr) \cong \Ext^n_{D_G^b(X;\overline{\Q}_{\ell})}(\Fscr,\Gscr).
\]
\end{propx}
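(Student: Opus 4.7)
The plan is to deduce this as a direct consequence of the equivariant Beilinson equivalence established in Theorem \ref{Thm: Equivariant Beilinson} (more specifically the corollary cited above), combined with the standard identification of Yoneda Ext in an abelian category $\Acal$ with Hom in $D^b(\Acal)$.

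First, I would recall the identification
\[
\Ext^n_{\Per_G(X;\overline{\Q}_{\ell})}(\Fscr,\Gscr) \cong \Hom_{D^b(\Per_G(X;\overline{\Q}_{\ell}))}(\Fscr,\Gscr[n]),
\]
which follows because $\Per_G(X;\overline{\Q}_{\ell})$ is an abelian category (being the heart of the perverse $t$-structure on the triangulated category $D^b_G(X;\overline{\Q}_{\ell})$) and so the Yoneda $\Ext$ agrees with morphisms into shifts in the bounded derived category. Dually, the target in the statement unwinds to
\[
\Ext^n_{D_G^b(X;\overline{\Q}_{\ell})}(\Fscr,\Gscr) = \Hom_{D_G^b(X;\overline{\Q}_{\ell})}(\Fscr,\Gscr[n])
\]
by definition of $\Ext$ in a triangulated category.

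Next, I invoke the equivariant Beilinson equivalence to produce a functor
\[
\ul{\rea}_{X}\colon D^b\bigl(\Per_G(X;\overline{\Q}_{\ell})\bigr) \xrightarrow{\simeq} D_G^b(X;\overline{\Q}_{\ell}),
\]
which is $t$-exact by the statement of Theorem \ref{Thm: Section Pseudocone Realization: Pseudocone Realization}. Because this equivalence is $t$-exact and $\Fscr, \Gscr$ lie in the heart $\Per_G(X;\overline{\Q}_{\ell})$, the realization sends $\Fscr$ to $\Fscr$ and $\Gscr[n]$ to $\Gscr[n]$ up to canonical natural isomorphism; here I use that the realization restricts to the identity on the hearts and that shifts are preserved by triangulated functors. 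Applying the equivalence to the Hom groups on each side of the first identification then yields
\[
\Hom_{D^b(\Per_G(X;\overline{\Q}_{\ell}))}(\Fscr,\Gscr[n]) \cong \Hom_{D_G^b(X;\overline{\Q}_{\ell})}(\Fscr,\Gscr[n]).
\]
Stringing these three isomorphisms together produces the desired natural isomorphism and finishes the argument.

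The only nontrivial point I anticipate is verifying that the realization functor genuinely restricts to the identity on the hearts, i.e.\@ that an object $\Fscr \in \Per_G(X;\overline{\Q}_{\ell})$ viewed as a complex concentrated in degree $0$ is sent, under $\ul{\rea}_X$, to the object $\Fscr$ in $D_G^b(X;\overline{\Q}_{\ell})$. This is standard for Beilinson's non-equivariant realization, and in the pseudocone framework it should follow from the explicit formula $\PC(\ul{\rea})(A) = \lbrace \quot{\ul{\rea}}{X}(\quot{A}{X}) : X \in \Cscr_0 \rbrace$ together with the fact that each component realization is $t$-exact and identity-on-hearts. Once this bookkeeping is in place the result follows immediately.
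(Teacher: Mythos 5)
Your proposal correctly reduces the statement to a comparison of Hom-groups via the (classical) identification $\Ext^n_{\Acal}(A,B) \cong \Hom_{D^b(\Acal)}(A,B[n])$ and then appeals to the equivariant Beilinson equivalence. There is, however, a genuine gap at the second step: the realization functor you invoke does not have the domain you need.

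You write the realization as
\[
\ul{\rea}_X \colon D^b\bigl(\Per_G(X;\overline{\Q}_{\ell})\bigr) \xrightarrow{\;\simeq\;} D_G^b(X;\overline{\Q}_{\ell}),
\]
citing Theorem \ref{Thm: Section Pseudocone Realization: Pseudocone Realization}. But the realization constructed there, and the equivalence in Theorem \ref{Thm: Equivariant Beilinson} and Corollary \ref{Cor: Section Equiv Beilin: EDC is EDPer for all var}, has domain $D^b_G(\Per(X;\overline{\Q}_{\ell})) = \PC\bigl(D^b(\Per(G \backslash(-)))\bigr)$ --- the pseudocone of object-local bounded derived categories --- \emph{not} the ordinary bounded derived category $D^b(\Per_G(X;\overline{\Q}_{\ell}))$ of the abelian category $\Per_G(X;\overline{\Q}_{\ell})$. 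These are a priori distinct constructions (a pseudolimit of derived categories versus a derived category of a pseudolimit), and the paper explicitly keeps them separate: it only asserts that $\Per_G(X) \simeq D^b_G(\Per(X))^{\heartsuit}$. Having the same heart does not force the two triangulated categories to agree --- that sort of non-identification is precisely why the equivariant derived category is defined by descent rather than as $D^b$ of an equivariant abelian category. So your chain of isomorphisms
\[
\Ext^n_{\Per_G(X)}(\Fscr,\Gscr) \cong \Hom_{D^b(\Per_G(X))}(\Fscr,\Gscr[n]) \cong \Hom_{D^b_G(X)}(\Fscr,\Gscr[n])
\]
has its second link unjustified: to use the paper's realization functor you would need either $D^b(\Per_G(X)) \simeq D^b_G(\Per(X))$ or an independent Beilinson-type realization $D^b(\Per_G(X)) \to D^b_G(X)$ proven to be an equivalence, and neither is supplied. (Your own final paragraph betrays the slippage: the formula $\PC(\ul{\rea})(A) = \{\quot{\ul{\rea}}{X}(\quot{A}{X})\}$ describes a functor whose inputs are pseudocone objects, i.e.\@ objects of $D^b_G(\Per(X))$, not complexes of equivariant perverse sheaves.)

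The paper's argument avoids this by never passing through $D^b(\Per_G(X))$: it takes an $n$-extension of equivariant perverse sheaves, decomposes it along the equivalence $\Per_G(X) \simeq \PC(\Per(G\backslash(-)))$ into compatible $\Gamma$-local extensions, applies the Yoneda Ext/Hom correspondence \emph{in each local derived category} $D^b(\Per(G\backslash(\Gamma\times X)))$ (where it genuinely holds), checks compatibility with transition isomorphisms so that these assemble into a morphism $\Fscr \to \Gscr[n]$ in $D^b_G(\Per(X))$, and only then applies the realization equivalence. To repair your proof you would need to either carry out this object-local decomposition, or prove independently that $D^b(\Per_G(X)) \simeq D^b_G(\Per(X))$; neither is a formality.
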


\subsection{Acknowledgments}
I would like to thank Clifton Cunningham and James Steele for many stimulating conversations and posing the questions ``is it true that there is an equivalence
\[
D_G^b\left(\Per(X;\overline{\Q}_{\ell})\right) \simeq D_G^b(X;\overline{\Q}_{\ell})
\]
for a complex algebraic group $G$ and $G$-variety $X$?'' and ``for any $n \in \Z$ is it true that
\[
\Ext^n_{\Per_G(X;\overline{\Q}_{\ell})}(\Fscr,\Gscr) \cong D^b_G(X;\overline{\Q}_{\ell})(\Fscr,\Gscr[n])
\]
for equivariant perverse sheaves $\Fscr, \Gscr$?'' to me which motivated writing this paper. I am additionally grateful to the entire Voganish research group for helpful discussions and input.

\section{Equivariant Categories, Descent, and Pseudocones}\label{Section: }
In this section we recall the basics of the language of categories of pseudocones, some of their properties which we will need, and how they apply to the categories we study. A systematic and in-depth study of these categories, their properties, and how they apply to equivariant descent for varieties and topological spaces is done in \cite{Monograph}. What is important for this paper is that the work there allows us to construct an equivariant derived category $D_G^b(\Per(X))$ of perverse sheaves on $X$ which is a triangulated category with standard $t$-structure for which $D^b_G(\Per(X))^{\heartsuit_{\operatorname{stand}}} \simeq \Per_G(X)$.

Before diving in directly, let us set some conventions. In this paper all the schemes we consider are quasi-projective and by varieties we mean separated schemes of finite type over a field. We also assume some familarity with $2$-category theory, the theory of pseudofunctors, pseudonatural transformations, modifications, and some of the basics involving the theory of bicategories. A textbook introduction to the higher category theory we use in this paper can be found in \cite{TwoDimCat}.

We will begin by discussing some of the basics of equivariant descent we use to define the equivariant derived category and equivariant derived category of perverse sheaves. The main technique used to define the equivariant category on a variety follows the definitions of Lusztig and Achar: we must descend by principal $G$-varieties (which, in case $G$ is not affine, are locally isotrivial). An in-depth discussion of this condition is given in \cite[{Section 6.1}]{Monograph} but for the reader who is content with working over affine algebraic groups this poses no issue nor distinction.

\begin{definition}\label{Defn: Pseudocone cat}
Let $F:\Cscr^{\op} \to \fCat$ be a pseudofunctor. The category of pseudocones of shape $F$ (with apex $\One$) is defined to be the category
\[
\PC(F) := \Bicat(\Cscr^{\op},\fCat)(\cnst(\One),F)
\]
where $\cnst(\One):\Cscr^{\op} \to \fCat$ is the constant pseudofunctor at the terminal category $\One$.
\end{definition}
A large list of these pseudocone categories is given in \cite[Example 2.3.12]{Monograph} and many of the categories given come from algebraic-geometric contexts. We now recall from \cite[Remark 2.3.8]{Monograph} a more explicit combinatorial description of the objects and morphisms in a pseudocone category.
\begin{remark}\label{Remark: Pseudocone Section: Pseudocone explicit} 
	Let us unwrap the definition of $\PC(F)$ in order to give an explicit description of the objects and morphisms of $\PC(F)$, as working with $\PC(F)$ is much more straightforward with this knowledge. To this end recall that to give a functor $\quot{\alpha}{X}:\One \to F(X)$ is to pick out an object of $F(X)$ (and its identity morphism, of course). Similarly, if $f:X \to Y$ is a morphism in $\Cscr$ then the $2$-cell 
	\[
	\quot{\alpha}{f}:(\quot{\alpha}{X} \circ \id_{\One})(\ast) \xrightarrow{\cong} (F(f) \circ \quot{\alpha}{Y})(\ast)
	\]
	corresponds to giving an isomorphism $\quot{\alpha}{X}(\ast) \cong F(f)(\quot{\alpha}{Y}(\ast))$; we write 
	\[
	\tau_f:F(f)(\quot{\alpha}{Y}(\ast)) \xrightarrow{\cong} \quot{\alpha}{X}
	\] 
	for the corresponding inverse isomorphism and call these the transition isomorphisms (of the pseudocone) --- note that since the inverse of an isomorphism is unique, determining the $\quot{\alpha}{f}$ is equivalent to determining the $\tau_f$. Asking the pasting diagrams to coincide is equivalent to asking that for a composable pair $X \xrightarrow{f} Y \xrightarrow{g} Z$ of morphisms in $\Cscr$, the diagram
	\[
	\begin{tikzcd}
		F(f)\left(F(g)\quot{\alpha}{Z}(\ast)\right) \ar[d, swap]{}{\phi_{f,g}^{\quot{\alpha}{Z}(\ast)}} \ar[rr]{}{F(f)(\tau_g)} & & F(f)(\quot{\alpha}{Y}(\ast)) \ar[d]{}{\tau_f} \\
		F(g \circ f)\quot{\alpha}{Z}(\ast) \ar[rr, swap]{}{\tau_{g \circ f}} & & \quot{\alpha}{X}(\ast)
	\end{tikzcd}
	\]
	must commute. That is, the pseudonaturality condition gives rise to the cocycle condition 
	\[
	\tau_{g \circ f} \circ \phi_{f,g}^{\quot{\alpha}{Z}(\ast)} = \tau_{f} \circ F(f)(\tau_g).
	\]
	Similarly, if we have pseudocones $\alpha, \beta:\cnst(\One) \Rightarrow F$ a modification $\rho:\alpha \Rightarrow \beta$ is given by a family of natural transformations $\quot{\rho}{X}:\quot{\alpha}{X} \Rightarrow \quot{\beta}{X}$, one for each $X \in \Cscr_0$, such that for all morphisms $f:X \to Y$ in $\Cscr$ the diagrams of functors and natural transformations
	\[
	\begin{tikzcd}
		\quot{\alpha}{X} \ar[d, swap]{}{\quot{\alpha}{f}} \ar[rr]{}{\quot{\rho}{X}} & & \quot{\beta}{X} \ar[d]{}{\quot{\beta}{f}} \\
		F(f) \circ \quot{\alpha}{Y} \ar[rr, swap]{}{F(f) \ast \quot{\rho}{Y}} & & F(f) \circ \quot{\beta}{Y}
	\end{tikzcd}
	\]
	commute. However, this implies that we can replace the isomorphisms $\quot{\alpha}{f}$ with the corresponding transition isomorphisms to deduce that the diagrams
	\[
	\begin{tikzcd}
		F(f) \circ \quot{\alpha}{Y} \ar[rr]{}{F(f) \ast \quot{\rho}{Y}} \ar[d, swap]{}{\tau_f} & & F(f) \circ \quot{\beta}{Y} \ar[d]{}{\sigma_f} \\
		\quot{\alpha}{X} \ar[rr, swap]{}{\quot{\rho}{X}} & & \quot{\beta}{X}
	\end{tikzcd}
	\]
	also must commute. This allows us to give the following explicit description for $\PC(F)$:
	\begin{itemize}
		\item Objects: Pairs $(A, T_A)$ where 
		\[
		A = \lbrace \quot{A}{X} \; | \; X \in \Cscr_0, \quot{A}{X} \in F(X)_0 \rbrace
		\] 
		is a collection of objects and transition isomorphisms
		\[
		T_A = \lbrace \tau_f^{A}:F(F)(\quot{A}{Y}) \xrightarrow{\cong} \quot{A}{X} \; | \; f:X \to Y, f \in \Cscr_1 \rbrace
		\]
		which satisfy the cocycle condition
		\[
		\tau_{g \circ f}^A \circ \phi_{f,g} = \tau_f^A \circ F(f)\left(\tau_g^A\right)
		\]
		for any composable morphisms $X \xrightarrow{f} Y \xrightarrow{g} Z$ in $\Cscr$.
		\item Morphisms: A morphism $P:(A,T_A) \to (B, T_B)$ is a collection of morphisms 
		\[
		P = \lbrace \quot{\rho}{X}:\quot{A}{X} \to \quot{B}{X} \; | \; X \in \Cscr_0 \rbrace
		\]
		such that for any morphisms $f:X \to Y$ the diagram
		\[
		\begin{tikzcd}
			F(F)(\quot{A}{X}) \ar[rr]{}{F(f)(\quot{\rho}{X})} \ar[d, swap]{}{\tau_f^A} & & F(f)(\quot{B}{Y}) \ar[d]{}{\tau_f^B} \\
			\quot{A}{X} \ar[rr, swap]{}{\quot{\rho}{X}}	& & \quot{B}{X}
		\end{tikzcd}
		\]
		commutes.
	\end{itemize}
\end{remark}
 
Let us now define the pseudofunctors which will induce the equivariant bounded derived category of perverse sheaves we consider in this paper; the core ideas we use go back to the perspectives pioneered and championed by \cite{BernLun}, \cite{LusztigCuspidal2}, \cite{PramodBook}. We first need to define the category which controls the equivariant descent which we use to define our equivariant derived categories. While the discussion here is quite brief, a detailed and in-depth discussion is given in \cite[Section 2.1, 6.1]{Monograph}. As a result, we recall what it means to be a smooth free $G$-variety, a smooth free $G$-resolution of a $G$-variety, and then describe the pseudofunctors which induce the equivariant derived categories of perverse sheaves and the usual equivariant derived category.

\begin{definition}[{\cite[Section 1.9]{LusztigCuspidal2}}]\label{Defn: Pseudocone Section: Smooth free varieties}
	A left $G$-variety $\Gamma$ is a smooth free $G$-variety if:
	\begin{itemize}
		\item $\Gamma$ is smooth, pure dimensional, and admits a geometric quotient $q:\Gamma \to G \backslash \Gamma$ (cf.\@ \cite[Definition 0.6]{GIT}).
		\item The geometric quotient 
		\[
		\quo:\Gamma \to G \backslash \Gamma
		\] 
		is ({\'e}tale) locally isotrivial with fibre $G$, i.e., there is an {\'e}tale cover $\lbrace \varphi_i:U_i \to G \backslash \Gamma \; | \; i \in I \rbrace$ of $G \backslash \Gamma$ such that in each pullback diagram
		\[
		\begin{tikzcd}
			\quo^{-1}(U_i) \ar[r] \ar[d] & \Gamma \ar[d]{}{q} \\
			U_i \ar[r, swap]{}{\varphi_i} & G \backslash \Gamma
		\end{tikzcd}
		\]
		there is an isomorphism $\quo^{-1}(U_i) \cong G \times U_i$, the corresponding map $G \times U_i \to U_i$ acts via projection $\pi_2:G \times U_i \to U_i$, and the {\'e}tale map $\varphi_i:U_i \to G \backslash \Gamma$ can be taken to be finite {\'e}tale.
	\end{itemize}
\end{definition}
\begin{remark}
	The reason we ask for the quotients above to be locally isotrivial is so that we can apply \cite[Proposition 6.1.4]{Monograph}, which states that for any smooth free $G$-variety $\Gamma$ and any left $G$-variety $X$ there is a geometric quotient $G \backslash (\Gamma \times X)$. When $G$ is affine this poses no issue; by \cite[Remark 6.1.5]{Monograph} (cf.\@ also \cite[Proposition 6.1.13]{PramodBook} and \cite[Lemme XIV 1.4]{Raynaud}) $\Sf(G)$-varieties are exactly those pure dimensional smooth free $G$-varieties which admit a geometric quotient.
\end{remark}
\begin{definition}[{\cite[Section 1.9]{LusztigCuspidal2}}]\label{Defn: Pseudocone Section: Smooth free morphisms}
	A morphism of smooth free $G$-varieties is a smooth $G$-equivariant morphism $f:\Gamma \to \Gamma^{\prime}$ where $\Gamma$ and $\Gamma^{\prime}$ are smooth free $G$-varieties and $f$ is a morphism of constant fibre dimension. Explicitly, for every $\gamma_0, \gamma_1 \in \lvert \Gamma \rvert$ we have $\dim f^{-1}(\gamma_0) = \dim f^{-1}(\gamma_1)$.
\end{definition}
\begin{definition}\label{Defn: Pseudocone Section: Sf(G)}
	Define the category of smooth free $G$-varieties $\Sf(G)$ as follows:
	\begin{itemize}
		\item Objects: Smooth free $G$-varieties $\Gamma$ (cf.\@ Definition \ref{Defn: Pseudocone Section: Smooth free varieties}).
		\item Morphisms: Morphisms of smooth free $G$-varieties (cf.\@ Definition \ref{Defn: Pseudocone Section: Smooth free morphisms}).
		\item Composition and Identities: As in $\Var_{/K}$.
	\end{itemize}
\end{definition}
The category of smooth free $G$-resolutions of a $G$-variety is then the category ``$\Sf(G) \times X$.'' We'll justify this as a good category with which to do equivariant descent shortly, but we'll get to know the category of resolutions itself first.
\begin{definition}\label{Defn: Pseudocone Section: SfResl}
	Let $X$ be a left $G$-variety. We define the category $\SfResl_G(X)$ of smooth free $G$-resolutions of $X$ as follows.
	\begin{itemize}
		\item Objects: Maps $\pi_2:\Gamma \times X \to X$ for $\Gamma \in \Sf(G)_0$.
		\item Morphisms: Maps $f:\Gamma \times X \to \Gamma^{\prime} \times X$ of the form $f = h \times \id_X$ for $h \in \Sf(G)(\Gamma, \Gamma^{\prime})$.
		\item Composition and Identities: As in $\Var_{/K}$.
	\end{itemize}
\end{definition}

The categories $\Sf(G)$ and $\SfResl_G(X)$ are studied in more detail in \cite[Chapter 6]{Monograph}. The important aspects we need to know about the category $\SfResl_G(X)$ are the following:
\begin{enumerate}
	\item For every smooth free $G$-variety $\Gamma \in \Sf(G)_0$ and every $G$-variety $X$, the variety $\Gamma \times X$ admits a geometric quotient variety $G \backslash (\Gamma \times X)$ which is functorial in $\SfResl_G(X)$ (cf.\@ \cite[Proposition 6.1.5]{Monograph} for the existence of the quotient functor; note that the local isotriviality of the resolving varieties $\Gamma$ and quasi-projectivity of $X$ are both essential). If $f \times \id_X:\Gamma \times X \to \Gamma^{\prime} \times X$ is a morphism in $\SfResl_G(X)$ then we write $\of:G \backslash (\Gamma \times X) \to G \backslash (\Gamma^{\prime} \times X)$
	\item If $X$ is a free $G$-variety then there is an equivalence of categories 
	\[
	D_G^b(X;\overline{\Q}_{\ell}) \simeq D_c^b(G \backslash X; \overline{\Q}_{\ell}).
	\]
	Note that this follows from \cite[Proposition 2.3.13]{Monograph}; cf.\@ also Proposition \ref{Prop: Sanity} below.
\end{enumerate}
\begin{definition}\label{Defn: Section Recall: Pseudofunctor of derived perverse sheaves}
Consider the following flavours of pseudofunctors of derived categories of perverse sheaves:
\begin{enumerate}
	\item Let $K$ be a field, $G$ a smooth algebraic group over $K$, $X$ a left $G$-variety, and let $R$ be a finite ring for which the characteristic of $R$ is coprime to the characteristic of $K$. Then we define the pseudofunctor 
	\[
	D^b(\Per(G \backslash(-));\RMod):\SfResl_G(X)^{\op} \to \fCat
	\] 
	as follows: for each object $\Gamma \times X$ of $\SfResl_G(X)$ we define the fibre category to be $D^b(\Per(G \backslash (\Gamma \times X);\RMod))$, where $\Per(G \backslash (\Gamma \times X);\RMod)$ is the category of perverse ({\'e}tale) sheaves of $R$-modules on the quotient variety $G \backslash (\Gamma \times X)$, and for each morphism $f\times\id_X$ we define the fibre functor to be given by ${}^{p}(G \backslash (f \times \id_X))^{\ast}$.
	\item Let $K$ be a field, $G$ a smooth algebraic group over $K$, $X$ a left $G$-variety, and let $\ell$ be an integer prime which is coprime to the characteristic of $K$. Then we define the pseudofunctor
	\[
	D^b(\Per(G \backslash(-));\overline{\Q}_{\ell}):\SfResl_G(X)^{\op} \to \fCat
\] 
as follows: for each object $\Gamma \times X$ of $\SfResl_G(X)$ we define the fibre category to be $D^b(\Per(G \backslash (\Gamma \times X);\overline{\Q}_{\ell}))$, where $\Per(G \backslash (\Gamma \times X);\overline{\Q}_{\ell})$ is the category of perverse $\ell$-adic sheaves on the quotient variety $G \backslash (\Gamma \times X)$, and for each morphism $f\times\id_X$ we define the fibre functor to be given by ${}^{p}(G \backslash (f \times \id_X))^{\ast}$.	
	\item Let $G$ be a complex algebraic group, let $X$ be a left $G$-variety (over $\C$), and let $K$ be a field. We then define the psuedofunctor
		\[
	D^b\left(\Per((G \backslash(-))(\C)),\KVect\right):\SfResl_G(X)^{\op} \to \fCat
	\] 
	as follows: for each object $\Gamma \times X$ of $\SfResl_G(X)$ we define the fibre category to be $D^b(\Per((G \backslash (\Gamma \times X))(\C),\KVect))$, where the category $\Per((G \backslash (\Gamma \times X))(\C),\KVect)$ is the category of perverse sheaves of $K$-vector spaces in the classical topology on the quotient variety $G \backslash (\Gamma \times X)$, and for each morphism $f\times\id_X$ we define the fibre functor to be given by ${}^{p}(G \backslash (f \times \id_X))^{\ast}$.	
	\item Let $\Fbb_q$ be a finite field, let $G$ be an algebraic group over $\Fbb_q$, $X$ a left $G$-variety, and let $\ell$ be an integer prime with $\gcd(q,\ell) =1$. Then we define the pseudofunctor
		\[
	D^b(\Per_m(G \backslash(-));\overline{\Q}_{\ell}):\SfResl_G(X)^{\op} \to \fCat
	\] 
	as follows: for each object $\Gamma \times X$ of $\SfResl_G(X)$ we define the fibre category to be $D^b(\Per_m(G \backslash (\Gamma \times X);\overline{\Q}_{\ell}))$, where $\Per_m(G \backslash (\Gamma \times X);\RMod)$ is the category of mixed perverse $\ell$-adic sheaves on the quotient variety $G \backslash (\Gamma \times X)$, and for each morphism $f\times\id_X$ we define the fibre functor to be given by ${}^{p}(G \backslash (f \times \id_X))^{\ast}$.	
\end{enumerate}
\end{definition}
\begin{definition}\label{Defn: Section Recall: EDC of Perverse Sheaves}
%
The four main flavours of equivariant derived category we consider in this paper are given as follows:
\begin{enumerate}
	\item Let $K$ be a field, $G$ a smooth algebraic group over $K$, $X$ a left $G$-variety, and let $R$ be a finite ring with characteristic coprime to the characteristic of $K$. The equivariant bounded derived category of perverse sheaves of $R$-modules is given by
	\[
	D_G^b\left(\Per\left(X,\RMod\right)\right) := \PC\left(D^b(\Per(G \backslash (-),\RMod))\right)
	\]
	where $D^b(\Per(G \backslash (-),\RMod))$ is the pseudofunctor described in Flavour $(1)$ of Definition \ref{Defn: Section Recall: Pseudofunctor of derived perverse sheaves}.
	\item Let $K$ be a field, $G$ a smooth algebraic group over $K$, $X$ a left $G$-variety, and let $\ell$ be an integer prime which is coprime to the characteristic of $K$. Then the equivariant bounded derived category of $\ell$-adic perverse sheaves is defined to be
	\[
	D_G^b\left(\Per(X;\overline{\Q}_{\ell})\right) := \PC\left(D^b\left(\Per\left(G \backslash (-); \overline{\Q}_{\ell}\right)\right)\right)
	\]
	where $D^b(\Per(G \backslash (-);\overline{\Q}_{\ell}))$ is the pseudofunctor described in Flavour $(2)$ of \ref{Defn: Section Recall: Pseudofunctor of derived perverse sheaves}.
	\item  Let $G$ be a complex algebraic group, let $X$ be a left $G$-variety (over $\C$), and let $K$ be a field. We then define the equivariant bounded derived category of perverse $K$-sheaves in the classical topology on $X$ by
	\[
	D_G^b\left(X(\C),\KVect\right) := \PC\left(D^b\left(\Per((G \backslash(-))(\C)),\KVect\right)\right)
	\] 
	where $D^b(\Per((G \backslash (-))(\C), \KVect))$ is the pseudofunctor described in Flavour $(3)$ of Definition \ref{Defn: Section Recall: Pseudofunctor of derived perverse sheaves}.
	\item Let $\Fbb_q$ be a finite field, let $G$ be an algebraic group over $\Fbb_q$, $X$ a left $G$-variety, and let $\ell$ be an integer prime with $\gcd(q,\ell) =1$. Then we define the equivariant bounded derived category of mixed $\ell$-adic perverse sheaves to be
	\[
	D_G^b\left(\Per_m(X;\overline{\Q}_{\ell})\right) := \PC\left(D^b\left(\Per_m\left(G \backslash (-);\overline{\Q}_{\ell}\right)\right)\right)
	\]
	where $D^b(\Per_m(X;\overline{\Q}_{\ell}))$ is the pseudofunctor described in Flavour $(4)$ of Definition \ref{Defn: Section Recall: Pseudofunctor of derived perverse sheaves}.
\end{enumerate}
\end{definition}
\begin{definition}\label{Defn: Section Recall: EDC}
Consider the following different flavours of equivariant derived categories:
\begin{enumerate}
	\item Let $K$ be a field, $G$ a smooth algebraic group over $K$, and $X$ a left $G$-variety. Then the equivariant derived category of {\'e}tale sheaves on $X$, $D_G^b(X)$, is the category $\PC(D)$ where $D:\SfResl_G(X)^{\op} \to \fCat$ is the pseudofunctor which sends each variety $\Gamma \times X$ to the constructible derived category $D_c^b(G \backslash (\Gamma \times X))$, sends a morphism $f \times \id_X:\Gamma \times X \to \Gamma^{\prime} \times X$ to the pullback functor $(G \backslash (f \times \id_X))^{\ast}$, and has any fixed choice of compositor isomorphisms.
	\item Let $K$ be a field, $G$ a smooth algebraic group over $K$, and $X$ a left $G$-variety. Then the equivariant derived category of {\'e}tale sheaves of $R$-modules on $X$, $D_G^b(X;\RMod)$, is the category $\PC(D)$ where $D:\SfResl_G(X)^{\op} \to \fCat$ is the pseudofunctor which sends each variety $\Gamma \times X$ to the constructible derived category $D_c^b(G \backslash (\Gamma \times X);\RMod)$ of $R$-modules, sends a morphism $f \times \id_X:\Gamma \times X \to \Gamma^{\prime} \times X$ to the pullback functor $(G \backslash (f \times \id_X))^{\ast}$, and has any fixed choice of compositor isomorphisms.
	\item Let $K$ be an algebraically closed field, $G$ an algebraic group over $K$, and $X$ a left $G$-variety. Then the equivariant derived category of $\ell$-adic sheaves on $X$ for a prime $\ell$ coprime to the characteristic of $K$, $D_G^b(X;\overline{\Q}_{\ell})$, is the category $\PC(D)$ where $D:\SfResl_G(X)^{\op} \to \fCat$ is the pseudofunctor which sends each variety $\Gamma \times X$ to the constructible $\ell$-adic derived category $D_c^b(G \backslash (\Gamma \times X);\overline{\Q}_{\ell})$, sends a morphism $f \times \id_X:\Gamma \times X \to \Gamma^{\prime} \times X$ to the pullback functor $(G \backslash (f \times \id_X))^{\ast}$, and has any fixed choice of compositor isomorphisms.
	\item Let $G$ be a complex algebraic group, $X$ a left $G$-variety, and let $K$ be a field. Then the equivariant derived category of $K$-sheaves in the classical topology on $X$, $D_G^b(X(\C),\KVect)$ is given by $\PC(D)$ where $D:\SfResl_G(X)^{\op} \to \fCat$ is the pseudofunctor given by sending each variety $\Gamma \times X$ to the bounded derived category of $K$-sheaves
	\[
	D_c^b\left(\left(G \backslash (\Gamma \times X)\right)(\C),\KVect\right)
	\]
	with algebriacally constructible cohomology and sends each morphism $f\times \id_X$ to the corresponding fibre functor $(G \backslash (f \times \id_X))^{\ast}$.
\end{enumerate}
\end{definition}

As an immediate consequence of the definition of the equivariant derived category of perverse sheaves in all cases, we get the following equivalences of categories.
\begin{proposition}\label{Prop: Sanity}
Let $D^b_G(\Per(X))$ be any of the four flavours of equivariant derived cateogries of perverse sheaves given in Definition \ref{Defn: Section Recall: EDC of Perverse Sheaves}. Then if $X$ is a free $G$-variety in the sense that the $G$-action on $X$ is locally isotrivial there is an equivalence of categories
\[
D_G^b(\Per(X)) \simeq D^b(\Per(G \backslash X)).
\]
\end{proposition}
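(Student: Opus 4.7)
The plan is to reduce directly to the general pseudocone descent principle \cite[Proposition 2.3.13]{Monograph}, applied to each of the four flavours of the pseudofunctor $F$ described in Definition \ref{Defn: Section Recall: Pseudofunctor of derived perverse sheaves}. The key geometric observation is that when $X$ is a free $G$-variety, so is $\Gamma \times X$ for every $\Gamma \in \Sf(G)$, since $G$ already acts freely on $\Gamma$. Hence by the existence portion of the $\SfResl_G(X)$ machinery the geometric quotient $G \backslash (\Gamma \times X)$ exists and the induced projection $\overline{\pi}_{2,\Gamma}\colon G \backslash (\Gamma \times X) \to G \backslash X$ is smooth of relative dimension $\dim \Gamma$, in fact an \'etale-locally trivial $\Gamma$-bundle (one trivializes by pulling back a trivialization of $X \to G\backslash X$).

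I would then construct the equivalence explicitly as the functor
\[
\Phi\colon D^b(\Per(G\backslash X)) \to \PC(F), \qquad \Phi(\Fscr)_{\Gamma} := \overline{\pi}_{2,\Gamma}^{\ast}[\dim \Gamma]\, \Fscr,
\]
with transition isomorphisms induced by the pseudofunctoriality of pullback along the commuting triangles $\overline{\pi}_{2,\Gamma} \circ \overline{f \times \id_X} = \overline{\pi}_{2,\Gamma'}$ for morphisms $f \times \id_X \colon \Gamma \times X \to \Gamma' \times X$ in $\SfResl_G(X)$, and acting componentwise on morphisms. The two technical inputs are: (i) the shifted smooth pullback $\overline{\pi}_{2,\Gamma}^{\ast}[\dim \Gamma]$ is $t$-exact for the perverse $t$-structure and fully faithful on perverse sheaves, and (ii) smooth descent for perverse sheaves holds along the \'etale-locally trivial $\Gamma$-bundle $\overline{\pi}_{2,\Gamma}$. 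Granting (i), full faithfulness of $\Phi$ is immediate by restricting any morphism of pseudocones to its $\Gamma$-component for a single fixed $\Gamma$; granting (ii), essential surjectivity follows because the cocycle data of a pseudocone $A$ is precisely descent data for the single component $A_{\Gamma}$, producing a unique $\Fscr \in D^b(\Per(G\backslash X))$ with $\overline{\pi}_{2,\Gamma}^{\ast}[\dim \Gamma]\Fscr \cong A_{\Gamma}$, after which the other components are recovered by pullback and the remaining transition isomorphisms are matched by uniqueness of descent.

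The main obstacle, and the only substantive work, is verifying that (i) and (ii) hold uniformly across the four flavours. In the \'etale and $\ell$-adic settings both facts are standard consequences of the Beilinson--Bernstein--Deligne theory and smooth base change; in the classical-topology flavour over $\C$ they follow from the topological smooth pullback formula and the local nature of the perverse $t$-structure; in the mixed $\ell$-adic flavour the same arguments apply, the weight filtration being preserved by smooth pullback. Once (i) and (ii) are recorded in each flavour, the equivalence becomes formal and is exactly the pseudocone collapse furnished by \cite[Proposition 2.3.13]{Monograph}, yielding $\PC(F) \simeq D^b(\Per(G\backslash X))$ in all four cases simultaneously.
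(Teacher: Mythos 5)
Your approach is genuinely different from the paper's and, unfortunately, misses the observation that does all the work. The paper's proof is a one-line formal collapse: when the $G$-action on $X$ is locally isotrivial, $X$ itself becomes a terminal object of the indexing category $\SfResl_G(X)$, and a pseudocone over a diagram with a terminal object is canonically equivalent to the fibre at that terminal object (this is the content of \cite[Proposition 2.3.13]{Monograph}). No pullback functor, no descent, and no $t$-exactness or full faithfulness input is needed. You cite the same Proposition 2.3.13 at the very end, but as a generic ``pseudocone collapse'' rather than as a terminal-object statement, and the body of your argument tries to do by hand what that proposition already furnishes.

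The hand-done version as you wrote it has two concrete gaps. First, full faithfulness of $\Phi$ cannot be deduced by restriction to a single $\Gamma$-component: the Hom-set in $\PC(F)$ is a limit over all of $\SfResl_G(X)$, and for the projection to the $\Gamma$-factor to be an isomorphism you would need the derived extension of $\overline{\pi}_{2,\Gamma}^{\ast}[\dim\Gamma]$ to be fully faithful on $D^b(\Per(G\backslash X))$. Your input (i) only gives full faithfulness on the abelian heart; the prolonged functor on bounded derived categories of the hearts is not fully faithful for a fixed $\Gamma$ (pulling back along a $\Gamma$-bundle with cohomologically nontrivial fibre changes $\Ext$ groups --- this is precisely why Bernstein--Lunts and Lusztig pass to a directed system of increasingly acyclic resolutions rather than a single one). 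Second, the essential-surjectivity step identifies the transition data of a pseudocone with \v{C}ech-style descent data for the single smooth cover $\overline{\pi}_{2,\Gamma}$; these are different things. The pseudocone transitions are indexed by morphisms $\Gamma \to \Gamma'$ in $\Sf(G)$, not by the \v{C}ech nerve $(\Gamma\times X)\times_X(\Gamma\times X)\times_X\cdots$, and smooth descent along a single $\overline{\pi}_{2,\Gamma}$ does not directly repackage the whole pseudocone. Both gaps evaporate once you notice the terminal object.
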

\begin{proof}
Because the action of $G$ on $X$ is locally isotrivial, $X$ is a terminal object in $\SfResl_G(X)$. The proposition now follows from \cite[Proposition 2.3.13]{Monograph} and the fact that if $D$ is the corresponding pseudofunctor for which $D_G^b(\Per(X)) = \PC(D)$,
$D(X) = D^b(\Per(G \backslash X)).$
\end{proof}

We now recall the definitions of what it means to be a triangulated and truncated pseudofunctor as given in \cite{Monograph}. These describe the structure on a pseudofunctor sufficient to determine when the category $\PC(F)$ is triangulated (with triangulation induced by triangulations on the fibre categories $F(X)$) and admits $t$-structures which are given by $t$-structures on the fibre categories $F(X)$.
\begin{definition}[{\cite[{Definition 5.1.9}]{Monograph}}]\label{Defn: Recollection: triangulated pseuodofunctor}
	A pseudofunctor $F:\Cscr^{\op} \to \fCat$ is triangulated if for every object $X \in \Cscr_0$ the category $F(X)$ is triangulated and if for every morphism $f \in \Cscr_1$ the functor $F(f)$ is triangulated.
\end{definition}
\begin{definition}[{\cite[{Definition 5.1.18}]{Monograph}}]\label{Defn: Recollection: truncated pseudofunctor}
	Let $F:\Cscr^{\op} \to \fCat$ be a triangulated pseudofunctor. Assume each fibre category has $t$-structure with truncation functors $(\quot{\tau^{\leq 0}}{X},\quot{\tau^{\geq 0}}{X})$ such that these give rise to pseudofunctors $F^{\leq 0}:\Cscr^{\op} \to \fCat$ and $F^{\geq 0}:\Cscr^{\op} \to \fCat$ for which the functors $F^{\geq 0}(f)$ and $F^{\leq 0}(f)$ are $t$-exact functors which coincide on all common subcategories. Then if for all morphisms $f:X \to Y$ in $\Cscr$ there are natural isomorphisms
\[
\begin{tikzcd}
	F(Y) \ar[r, ""{name = U}]{}{F(f)} \ar[d, swap]{}{\quot{\tau^{\leq 0}}{Y}} & F(X) \ar[d]{}{\quot{\tau^{\leq 0}}{X}} \\
	F(Y)^{\leq 0} \ar[r, swap, ""{name = L}]{}{F^{\leq 0}(f)} & F(X)^{\leq 0} \ar[from = U, to = L, Rightarrow, shorten <= 4pt, shorten >= 4pt]{}{\theta_{f}^{\leq 0}}
\end{tikzcd}
\]
and
\[
\begin{tikzcd}
	F(Y) \ar[r, ""{name = U}]{}{F(f)} \ar[d, swap]{}{\quot{\tau^{\geq 0}}{Y}} & F(X) \ar[d]{}{\quot{\tau^{\geq 0}}{X}} \\
	F(Y)^{\geq 0} \ar[r, swap, ""{name = L}]{}{F^{\geq 0}(f)} & F(X)^{\geq 0} \ar[from = U, to = L, Rightarrow, shorten <= 4pt, shorten >= 4pt]{}{\theta_{f}^{\geq 0}}
\end{tikzcd}
\]
which satisfy the coherence diagrams presented below, we say that the pseudofunctor is truncated. The pasting diagram
\[
\begin{tikzcd}
	F(Z) \ar[rr, ""{name = UL}]{}{F(g)} \ar[d, swap]{}{\quot{\tau^{\leq 0}}{Z}} \ar[rrrr, bend left = 30, ""{name = U}]{}{F(g \circ f)} &  & F(Y) \ar[d]{}{\quot{\tau^{\leq 0}}{Y}} \ar[rr, ""{name = UR}]{}{F(f)} & & F(X) \ar[d]{}{\quot{\tau^{\leq 0}}{X}} \\
	F(Z)^{\leq 0} \ar[rr, swap, ""{name = LL}]{}{F^{\leq 0}(g)} \ar[rrrr, bend right = 30, swap, ""{name = L}]{}{F^{\leq 0}(g \circ f)} & & F(Y)^{\leq 0} \ar[rr, swap, ""{name = LR}]{}{F^{\leq 0}(f)} & & F(X)^{\leq 0} \ar[from = U, to = 1-3, Rightarrow, shorten <= 4pt, shorten <= 4pt]{}{\quot{\phi_{f,g}}{F}^{-1}} \ar[from = UR, to = LR, Rightarrow, shorten <= 4pt, shorten >= 4pt]{}{\theta_{f}^{\leq 0}} \ar[from = UL,, to = LL, Rightarrow, shorten <= 4pt, shorten >= 4pt]{}{\theta_g^{\leq 0}} \ar[from = 2-3, to = L, shorten <= 4pt, shorten >= 4pt, Rightarrow]{}{\quot{\phi_{f,g}}{F^{\leq 0}}}
\end{tikzcd}
\]
is equal to the $2$-cell
\[
\begin{tikzcd}
	F(Z) \ar[rr, ""{name = U}]{}{F(g \circ f)} \ar[d, swap]{}{\quot{\tau^{\leq 0}}{Z}} & & F(X) \ar[d]{}{\quot{\tau^{\leq 0}}{X}} \\
	F(Z)^{\leq 0} \ar[rr, swap, ""{name = L}]{}{F^{\leq 0}( g \circ f)} & & F(X)^{\leq 0} \ar[from = U, to = L, Rightarrow, shorten <= 4pt, shorten >= 4pt]{}{\theta_{g \circ f}^{\leq 0}}
\end{tikzcd}
\]
and dually the pasting diagram
\[
\begin{tikzcd}
	F(Z) \ar[rr, ""{name = UL}]{}{F(g)} \ar[d, swap]{}{\quot{\tau^{\geq 0}}{Z}} \ar[rrrr, bend left = 30, ""{name = U}]{}{F(g \circ f)} &  & F(Y) \ar[d]{}{\quot{\tau^{\geq 0}}{Y}} \ar[rr, ""{name = UR}]{}{F(f)} & & F(X) \ar[d]{}{\quot{\tau^{\geq 0}}{X}} \\
	F(Z)^{\geq 0} \ar[rr, swap, ""{name = LL}]{}{F^{\geq 0}(g)} \ar[rrrr, bend right = 30, swap, ""{name = L}]{}{F^{\geq 0}(g \circ f)} & & F(Y)^{\geq 0} \ar[rr, swap, ""{name = LR}]{}{F^{\geq 0}(f)} & & F(X)^{\geq 0} \ar[from = U, to = 1-3, Rightarrow, shorten <= 4pt, shorten <= 4pt]{}{\quot{\phi_{f,g}}{F}^{-1}} \ar[from = UR, to = LR, Rightarrow, shorten <= 4pt, shorten >= 4pt]{}{\theta_{f}^{\geq 0}} \ar[from = UL,, to = LL, Rightarrow, shorten <= 4pt, shorten >= 4pt]{}{\theta_g^{\geq 0}} \ar[from = 2-3, to = L, shorten <= 4pt, shorten >= 4pt, Rightarrow]{}{\quot{\phi_{f,g}}{F^{\geq 0}}}
\end{tikzcd}
\]
is equal to the $2$-cell:
\[
\begin{tikzcd}
	F(Z) \ar[rr, ""{name = U}]{}{F(g \circ f)} \ar[d, swap]{}{\quot{\tau^{\geq 0}}{Z}} & & F(X) \ar[d]{}{\quot{\tau^{\geq 0}}{X}} \\
	F(Z)^{\geq 0} \ar[rr, swap, ""{name = L}]{}{F^{\geq 0}( g \circ f)} & & F(X)^{\geq 0} \ar[from = U, to = L, Rightarrow, shorten <= 4pt, shorten >= 4pt]{}{\theta_{g \circ f}^{\geq 0}}
\end{tikzcd}
\]
\end{definition}

WWith these definitions and \cite{Monograph}, we now get that each flavour of $D_G^b(\Per(X))$ is a triangulated category with a $t$-structure equivalent to the category of equivariant perverse sheaves.
\begin{proposition}
Let $D_G^b(\Per(X))$ be any of the flavours of equivariant bounded derived category of perverse sheaves in Definition \ref{Defn: Section Recall: Pseudofunctor of derived perverse sheaves}. Then $D_G^b(\Per(X))$ is a triangulated category with $t$-structure whose heart satisfies
\[
D^b_G(\Per(X))^{\heartsuit} \simeq \Per_G(X)
\]
where $\Per_G(X)$ is the corresponding flavour of equivariant perverse sheaves.
\end{proposition}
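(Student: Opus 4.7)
The plan is to verify that each of the four pseudofunctors $D$ from Definition \ref{Defn: Section Recall: Pseudofunctor of derived perverse sheaves} is both a triangulated pseudofunctor and a truncated pseudofunctor in the senses of Definition \ref{Defn: Recollection: triangulated pseuodofunctor} and Definition \ref{Defn: Recollection: truncated pseudofunctor}, and then invoke the corresponding transfer theorems from \cite{Monograph} to push these structures through the pseudocone construction $\PC(D) = D_G^b(\Per(X))$.

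First I would verify that $D$ is triangulated. Each fibre category $D^b(\Per(G \backslash (\Gamma \times X)))$ is the bounded derived category of an abelian category and so is triangulated in the standard way. Each morphism $f \times \id_X$ in $\SfResl_G(X)$ yields, after taking geometric quotients, a smooth morphism $\overline{f \times \id_X}$ of constant relative dimension between geometric quotient varieties, so the perverse pullback ${}^p(G \backslash (f \times \id_X))^*$ is a well-defined $t$-exact triangulated functor; in particular it sends perverse sheaves to perverse sheaves and extends to a triangulated endofunctor on $D^b(\Per(-))$. Thus $D$ satisfies Definition \ref{Defn: Recollection: triangulated pseuodofunctor}.

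Next I would verify that $D$ is truncated. On each fibre $D^b(\Per(G \backslash (\Gamma \times X)))$ take the standard $t$-structure $(\tau^{\leq 0}, \tau^{\geq 0})$ with heart $\Per(G \backslash (\Gamma \times X))$. Since ${}^p(G \backslash (f \times \id_X))^*$ is $t$-exact for the standard $t$-structures on these bounded derived categories of abelian categories, restricting it to the subpseudofunctors $D^{\leq 0}$ and $D^{\geq 0}$ gives well-defined pseudofunctors, and the natural isomorphisms $\theta_f^{\leq 0}$, $\theta_f^{\geq 0}$ are obtained from the canonical $t$-exactness comparison. The coherence diagrams in Definition \ref{Defn: Recollection: truncated pseudofunctor} then reduce to the cocycle condition on the compositor $\phi_{f,g}^D$ together with the naturality of the standard truncations, which is routine.

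Having verified that $D$ is triangulated and truncated, the general results of \cite{Monograph} imply that $\PC(D)$ is a triangulated category and admits a $t$-structure where the truncation functors are computed object-wise by $\quot{\tau^{\leq 0}}{X}$ and $\quot{\tau^{\geq 0}}{X}$ on each fibre. The heart of this $t$-structure consists precisely of pseudocones whose components lie in $\Per(G \backslash (\Gamma \times X))$ for every $\Gamma \times X$, which by construction is nothing other than $\Per_G(X)$ (this is essentially the definition of the equivariant perverse sheaves in each of the four flavours, modulo the identification $\PC(F)^{\heartsuit} \simeq \PC(F^{\heartsuit})$ from \cite{Monograph}). The main obstacle is checking $t$-exactness and the coherence of the truncation pseudofunctors in the four separate flavours (finite-ring, $\ell$-adic, classical, and mixed), but in each case the required smoothness and constant-fibre-dimension hypotheses on morphisms in $\SfResl_G(X)$, together with the standard properties of perverse pullback along smooth morphisms of constant relative dimension, make the verification uniform.
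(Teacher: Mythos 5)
Your proposal matches the paper's proof in both structure and substance: you verify that the defining pseudofunctor is a triangulated pseudofunctor and a truncated pseudofunctor (using the smoothness and constant-fibre-dimension of morphisms in $\SfResl_G(X)$ to get $t$-exactness of the fibre functors), then invoke the pseudoconification transfer theorems from \cite{Monograph} for triangulation and truncation, and finish with the Change of Heart Theorem to identify the heart with $\Per_G(X)$. The only small difference is that the paper explicitly cites \cite[Corollary 5.1.28]{Monograph} for the final identification $\PC(\Per(G\backslash(-))) \simeq \Per_G(X)$, whereas you dismiss it as "by construction"; that last step is a genuine (though routine) comparison rather than a tautology.
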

\begin{proof}
That $D_G^b(\Per(X))$ is triangulated follows from \cite[Theorem 5.1.10]{Monograph} the fact that each category $D^b(\Per_G(G \backslash (\Gamma \times X)))$ is triangulated and each fibre functor ${}^{p}(G \backslash (\Gamma \times X))^{\ast}$ is triangulated exact. The $t$-structure on $D_G^b(\Per(X))$ is induced by \cite[Theorem 5.1.21]{Monograph}, the fact that each bounded derived category $D^b(\Per_G(G \backslash (\Gamma \times X)))$ has a standard $t$-structure, each fibre functor is exact for this $t$-structure, and the collection of truncation/inclusion adjunctions gives rise to a truncation structure for the corresponding pseudofunctor. Finally the Change of Heart Theorem (cf.\@ \cite[Theorem 5.1.24]{Monograph}) gives that
\begin{align*}
D_G^b(\Per(X))^{\heartsuit} &= \PC\left(D^b(\Per_G \backslash (-)))\right)^{\heartsuit} = \PC\left(D^b(\Per(G \backslash (-)))^{\heartsuit}\right) \\
&\simeq \PC(\Per(G \backslash (-))) \\
&\simeq \Per_G(X);
\end{align*}
note the last equivalence follows from techinques similar to those used in \cite[Corollary 5.1.28]{Monograph}.
\end{proof}

\section{Pseudocone Realization Functors}\label{Section: Pseudocone Realization Functors}
In this section we show how to construct realization functors $\PC(F) \to \PC(T)$ which are given in terms of local realization functors $F(X) \to T(X)$. Our strategy is to use these pseudoconical realizations to prove the equivariant version of Beilinson's Theorem; as such, we develop them in a relatively high level of generality. This section is, in essence, a pseudoconical version of \cite[{Appendix A}]{Beilinson} and gives a notion of pseudocone filtered categories and tools for descent filtrations.

\begin{definition}[{\cite[Definiiton A.1.a]{Beilinson}}]
Let $\Tscr$ be a triangulated category. We say that $\Tscr$ is filtered (or that $\Tscr$ is an $f$-category) if:
\begin{enumerate}
	\item $\Tscr$ has two strictly full subcategories $\Tscr^{f\geq 0}$ and $\Tscr^{f\leq 0}$.
	\item $\Tscr$ has a triangulated automorphism $s:\Tscr \to \Tscr$ (which we think of as a shift of filtration) together with a natural transformation $\alpha:\id_{\Tscr} \Rightarrow s$.
	\item If we define, for any $n \in \Z$,
	\[
	\Tscr^{f\leq n} := s^{n}\left(\Tscr^{f\leq 0}\right), \quad \Tscr^{f\geq n} := s^{n}\left(\Tscr^{f\geq 0}\right)
	\]
	then
	\[
	\bigcup_{n \in \Z} \Tscr^{f\geq n} = \Tscr = \bigcup_{n \in \Z} \Tscr^{f\leq n}.
	\]
	\item The identity $\alpha = s \ast \alpha \ast s^{-1}$ holds.
	\item For all $X, Y \in \Tscr_0$ if $X \in (\Tscr^{f\geq 1})_0$ and if $Y \in (\Tscr^{f\leq 0})_0$ then $\Tscr(X,Y) = 0$. Furthermore, $\alpha$ induces isomorphisms
	\[
	\Tscr(Y,s^{-1}X) \xrightarrow{\Tscr\left(Y,(\alpha \ast s^{-1})_X\right)} \Tscr(Y,X)
	\]
	and
	\[
	\Tscr(sY,X) \xrightarrow{\Tscr(\alpha_Y,X)} \Tscr(Y,X).
	\]
\end{enumerate}
We will call the information $(\Tscr^{f\leq 0}, \Tscr^{f\geq 0}, s, \alpha)$ the corresponding filtration on $\Tscr$.
\end{definition}
\begin{definition}[{\cite[{Definition A.1.b}]{Beilinson}}]
Let $\Cscr, \Tscr$ be filtered triangulated categories. We say that a functor $F:\Cscr \to \Tscr$ is filtered if $F$ is triangulated, $F \circ s_{\Cscr} = s_{\Tscr} \circ F$, $\alpha_{\Tscr} \ast F = F \ast \alpha_{\Cscr}$, and if $F$ preserves $\Cscr^{f\leq 0}$ and $\Cscr^{f\geq 0}$ in the sense that the diagrams
\[
\begin{tikzcd}
\Cscr^{f\leq 0} \ar[r]{}{\incl^{f\leq 0}_{\Cscr}} \ar[d, swap]{}{F|_{\Cscr^{f\leq 0}}} & \Cscr \ar[d]{}{F} \\
\Tscr^{f\leq 0} \ar[r, swap]{}{\incl_{\Tscr}^{f\leq 0}} & \Tscr
\end{tikzcd}
\]
and
\[
\begin{tikzcd}
	\Cscr^{f\geq 0} \ar[r]{}{\incl^{f\geq 0}_{\Cscr}} \ar[d, swap]{}{F|_{\Cscr^{f\geq 0}}} & \Cscr \ar[d]{}{F} \\
	\Tscr^{f\geq 0} \ar[r, swap]{}{\incl_{\Tscr}^{f\geq 0}} & \Tscr
\end{tikzcd}
\]
commute.
\end{definition}

Our first result in this section is that if we have a pseudofunctor $F$ which is coherently filtered, then the pseudocone category $\PC(F)$ is filtered.
\begin{proposition}\label{Prop: Section Pseudocone Realizations: PCF filtered}
Let $F:\Cscr^{\op} \to \fCat$ be a triangulated pseudofunctor for which:
\begin{itemize}
	\item For all $X \in \Cscr_0$ the category $F(X)$ is filtered with filtration given by the quadruple $(F(X)^{f\leq 0}, F(X)^{f\geq 0}, \quot{s}{X}, \quot{\alpha}{X})$.
	\item For all morphisms $f:X \to Y$ in $\Cscr$ the functor $F(f)$ is a filtered functor.
\end{itemize}
Then $\PC(F)$ is an $f$-category.
\end{proposition}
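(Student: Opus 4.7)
The plan is to lift the fibrewise filtered structure on $F$ to $\PC(F)$ pointwise and then verify the five $f$-category axioms one-by-one, with the majority of the work reducing to checks in each fibre $F(X)$.

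First I would define $\PC(F)^{f\leq 0}$ and $\PC(F)^{f\geq 0}$ as the full subcategories of pseudocones $A$ satisfying $\quot{A}{X} \in (F(X)^{f\leq 0})_0$ (respectively $(F(X)^{f\geq 0})_0$) for every $X \in \Cscr_0$. These are well-posed because $F(f)$ is filtered and hence restricts to $F(Y)^{f\leq 0} \to F(X)^{f\leq 0}$, so each transition isomorphism $\tau_f^A : F(f)(\quot{A}{Y}) \to \quot{A}{X}$ is automatically an isomorphism in the prescribed subcategory. The shift $s: \PC(F) \to \PC(F)$ is defined pointwise by $s(A) := \{\quot{s}{X}(\quot{A}{X}) : X \in \Cscr_0\}$ with transitions
$$\tau_f^{s(A)} : F(f)\bigl(\quot{s}{Y}(\quot{A}{Y})\bigr) = \quot{s}{X}\bigl(F(f)(\quot{A}{Y})\bigr) \xrightarrow{\quot{s}{X}(\tau_f^A)} \quot{s}{X}(\quot{A}{X}),$$
using the strict identity $F(f) \circ \quot{s}{Y} = \quot{s}{X} \circ F(f)$ built into the notion of filtered functor; the cocycle condition for $s(A)$ then reduces to applying $\quot{s}{X}$ to that for $A$. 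The natural transformation $\alpha: \id_{\PC(F)} \Rightarrow s$ has $X$-component at $A$ given by $(\quot{\alpha}{X})_{\quot{A}{X}}$, and the fact that this is a genuine modification follows from the filtered-functor identity $\alpha \ast F(f) = F(f) \ast \alpha$ applied at $\quot{A}{Y}$.

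Most axioms then reduce to fibrewise statements. Axioms (1) and (2) hold by construction; Axiom (4) reads $\quot{\alpha}{X} = \quot{s}{X} \ast \quot{\alpha}{X} \ast \quot{s}{X}^{-1}$ in each $F(X)$ and follows from the corresponding identity in each filtered fibre. For Axiom (5), a morphism $P: A \to B$ in $\PC(F)$ is precisely a compatible family $\{\quot{\rho}{X}\}$ of morphisms in the fibres, so if $A \in \PC(F)^{f\geq 1}$ and $B \in \PC(F)^{f\leq 0}$, each $\quot{\rho}{X}$ lies in the zero Hom-set of $F(X)$ and hence $P = 0$. The two induced bijections on Hom-sets in (5) are obtained by applying the corresponding fibrewise bijections componentwise and verifying that the resulting assignment still respects transition isomorphisms, which it does because $\alpha$ in $\PC(F)$ is itself assembled from the $\quot{\alpha}{X}$.

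The main obstacle is Axiom (3), the covering condition $\PC(F) = \bigcup_{n \in \Z} \PC(F)^{f\leq n}$ (and dually for $\geq n$). For each individual $X$ we certainly know $\quot{A}{X}$ lies in some $F(X)^{f\leq n_X}$, but what is needed is a uniform bound across all objects of $\Cscr$. The key observation is that if $f: X \to Y$ is a morphism in $\Cscr$ and $\quot{A}{Y} \in F(Y)^{f\leq n}$, then $F(f)(\quot{A}{Y}) \in F(X)^{f\leq n}$ because $F(f)$ preserves $F^{f\leq 0}$ and commutes strictly with $s$, whence the transition isomorphism $\tau_f^A$ forces $\quot{A}{X} \in F(X)^{f\leq n}$. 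Thus the function $X \mapsto n_X$ is subject to strong descent constraints along morphisms of $\Cscr$, and in the (co)filtered or connected indexing categories arising in the applications of interest a uniform bound can be extracted. Combining this with the preceding steps yields the filtration datum $(\PC(F)^{f\leq 0}, \PC(F)^{f\geq 0}, s, \alpha)$ that equips $\PC(F)$ with the structure of an $f$-category.
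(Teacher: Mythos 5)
Your proof mirrors the paper's for the structural parts: define $\PC(F)^{f\leq 0}$ and $\PC(F)^{f\geq 0}$ pointwise, assemble the $\quot{s}{X}$ into a pseudonatural transformation $\ul{s}$ (so $s := \PC(\ul{s})$), assemble the $\quot{\alpha}{X}$ into a modification $\ul{\alpha}$ (so $\alpha := \PC(\ul{\alpha})$), and reduce the remaining axioms to fibrewise checks. The genuinely new contribution you make is to stop and examine the covering condition, Axiom~(3), which the paper's proof passes over silently: it asserts that all axioms ``follow from the fact that for all $X \in \Cscr_0$ the corresponding identity holds'' and illustrates only with Axiom~(4). You are right that the covering condition is \emph{not} a fibrewise statement. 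One needs a single $n$ with $\quot{A}{X} \in F(X)^{f\leq n}$ for every $X$ simultaneously, and the pseudocone structure only gives the one-sided estimate you identify, namely that along a morphism $f:X\to Y$ the minimal degree of $\quot{A}{X}$ is bounded above by that of $\quot{A}{Y}$.

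However, your proposed resolution---that a uniform bound ``can be extracted'' when $\Cscr$ is connected or (co)filtered---does not survive scrutiny. Connectedness gives only one-sided control: along a zigzag $X_0 \to Y_1 \leftarrow X_1 \to Y_2 \leftarrow \cdots$ each $X_i$ is bounded above by an adjacent $Y_i$, but nothing bounds the $Y_i$, so the minimal degrees can escape to $+\infty$; cofilteredness produces lower bounds via common sources, not upper bounds. Indeed, with $\Cscr$ the discrete category on $\N$, $F$ constant at any nontrivial filtered triangulated category $\Dscr$ with shift $s$, and the pseudocone $A$ given by $\quot{A}{n} := s^{n}(C)$ for a fixed object $C$ of degree $0$, Axiom~(3) fails in $\PC(F)$, so the proposition as stated is false. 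What would close the gap is one of: $\Cscr$ essentially finite; $\Cscr$ admits a weakly terminal object $T$, so $n_X \leq n_T$ for all $X$; or each fibre functor $F(f)$ \emph{reflects} (not merely preserves) the subcategories $F^{f\leq n}$ and $F^{f\geq n}$, which together with connectedness of $\Cscr$ forces the minimal degree to be constant on every pseudocone. In the paper's intended applications---$\Cscr = \SfResl_G(X)$ connected and $F(f)$ a perverse pullback along a smooth map of quotient varieties---the third condition is the one that actually holds, so the downstream theorems are unaffected; but either the proposition needs such an added hypothesis, or the covering axiom needs a dedicated argument that the paper does not provide.
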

\begin{proof}
We define the strictly full subcategories $\PC(F)^{f\leq 0}$ and $\PC(F)^{f\geq 0}$ by taking the objects to be given by
\[
\left(\PC(F)^{f\leq 0}\right)_0 := \left\lbrace A \in \PC(F)_0 \; : \; \forall\,X \in \Cscr_0. \quot{A}{X} \in \left(F(X)^{f\leq 0}\right)_0 \right\rbrace
\]
and
\[
\left(\PC(F)^{f\geq 0}\right)_0 := \left\lbrace A \in \PC(F)_0 \; : \; \forall\,X \in \Cscr_0. \quot{A}{X} \in \left(F(X)^{f\geq 0}\right)_0 \right\rbrace
\]
and generating the morphisms appropriately in both cases. The assumptions regarding the fibre functors $F(f)$ and the shift functors $\quot{s}{X}$ imply that if for any morphism $f:X \to Y$ in $\Cscr$ we set $\quot{s}{f}$ to be the commutativity witness
\[
\begin{tikzcd}
F(Y) \ar[d, swap]{}{F(f)} \ar[r, ""{name = U}]{}{\quot{s}{Y}} & F(Y) \ar[d]{}{F(f)} \\
F(X) \ar[r, swap, ""{name = D}]{}{\quot{s}{X}} & F(X) \ar[from = U, to = D, Rightarrow, shorten <= 4pt, shorten >= 4pt]{}{\quot{s}{f}}
\end{tikzcd}
\]
then the collection $s = (\quot{s}{X}, \quot{s}{f})_{X \in \Cscr_0, f \in \Cscr_1}$ determines a pseudonatural transformation $\ul{s}:F \Rightarrow F$. Consequently, setting $s := \PC(\ul{s}):\PC(F) \to \PC(F)$, where $\PC$ is the (strict) pseudoconification $2$-functor of \cite[{Lemma 4.1.13}]{Monograph}, defines our shift automorphism. Similarly, we define our remaining component is done by first noticing that if we define the collection
\[
\ul{\alpha} := \left\lbrace \quot{\alpha}{X}:\id_{F(X)} \Rightarrow \quot{s}{X} \; | \; X \in \Cscr_0 \right\rbrace
\]
then for any map $f:X \to Y$ in $\Cscr$ the diagram of functors and natural transformations
\[
\begin{tikzcd}
F(f) \ar[rr]{}{F(f) \ast \quot{\alpha}{Y}} \ar[d, equals] & & F(f) \circ \quot{s}{Y} \ar[d]{}{\quot{s}{f}} \\
F(f) \ar[rr, swap]{}{\quot{\alpha}{Y} \ast F(f)} & & \quot{s}{X} \circ F(f)
\end{tikzcd}
\]
commutes exactly because $F(f)$ is an $f$-functor. Consequently $\ul{\alpha}$ determines a modification:
\[
\begin{tikzcd}
	\Cscr^{\op} \ar[rrr, bend left = 40, ""{name = Up}]{}{F} \ar[rrr, bend right = 40, swap, ""{name = Down}]{}{F} & & & \fCat \ar[from = Up, to = Down, Rightarrow, shorten <= 4pt, shorten >= 4pt, bend left = 40, ""{name = Left}]{}{\ul{s}} \ar[from = Up, to = Down, Rightarrow, shorten <= 4pt, shorten >= 4pt, bend right = 40, swap, ""{name = Right}]{}{\id_{F}} \ar[from = Right, to = Left, symbol = \underset{\ul{\alpha}}{\Rrightarrow}]
\end{tikzcd}
\]
We thus define $\alpha:\id_{\PC(F)} \to s$ by $\alpha := \PC(\ul{\alpha})$. That this satisfies the required axioms of a filtered category then follows from the fact that for all $X \in \Cscr_0$ the corresponding identity holds. For example, that $\alpha = s \ast \alpha \ast s$ holds follows from the fact that for all $X \in \Cscr_0$, $\quot{\alpha}{X} = \quot{s}{X} \ast \quot{\alpha}{X} \ast \quot{s^{-1}}{X}.$
\end{proof}
\begin{definition}\label{Defn: Section Pseudocone Realizations: Filtered Pseudofunctor}
Let $F:\Cscr^{\op} \to \fCat$ be a triangulated pseudofunctor. If each category $F(X)$ is filtered for all $X \in \Cscr_0$ and if $F(f)$ is an $f$-functor for all $f \in \Cscr_1$ then we say that $F$ is a triangulated filtered pseudofunctor.
\end{definition}
\begin{remark}\label{Remark: Section Pseudocone Realizations: The leq and geq pseudocones} 
If $F$ is a filtered triangulated pseudofunctor then the assumptions imply that there are pseudofunctors $F^{\fil\leq 0},$ $F^{\fil\geq 0},$ and $F^{\fil\leq 0} \cap F^{\fil\geq 0}$ which are defined by
\[
F^{\fil\geq 0}(X) := F(X)^{\fil\geq 0}, \quad F^{\fil\geq 0}(\varphi:X \to Y) := F(\varphi)|_{F(X)^{\fil\geq 0}},
\]
and similarly for each of the other cases. These pseudofunctors have induced inclusion pseudonatural transformations which fit into a commuting diagram of triangulated pseudofunctors
\[
\begin{tikzcd}
F^{\fil\geq 0} \cap F^{\fil\leq 0} \ar[drr]{}[description]{\incl_{\fil\geq 0, \fil\leq 0}} \ar[rr]{}{\incl_{\fil\geq 0, \fil\leq 0}^{\fil\leq 0}} \ar[d, swap]{}{\incl_{\fil\geq 0, \fil\leq 0}^{\fil\geq 0}} & & F^{\fil\leq 0} \ar[d]{}{\incl_{\fil\leq 0}} \\
F^{\fil\geq 0} \ar[rr, swap]{}{\incl_{\fil\geq 0}} & & F
\end{tikzcd}
\]
and satisfy
\[
\PC\left(F^{\fil\leq 0}\right) = \PC(F)^{\fil\leq 0}, \quad \PC\left(F^{\fil\geq 0}\right) = \PC(F)^{\fil\geq 0},
\]
and
\[
\PC\left(F^{\fil\geq 0} \cap F^{\fil\leq 0}\right) = \PC(F)^{\fil\geq 0} \cap \PC(F)^{\fil\leq 0}.
\]
Note also that each of the inclusion pseudonatural transformations are triangulated in the sense that each functor $\quot{\incl}{X}$ is triangulated for all $X \in \Cscr_0$. These then consequently fit into a commuting diagram of triangulated categories
\[
\begin{tikzcd}
	\PC(F)^{\fil\geq 0} \cap \PC(F)^{\fil\leq 0} \ar[drr]{}[description]{\incl_{\fil\geq 0, \fil\leq 0}} \ar[rr]{}{\incl_{\fil\geq 0, \fil\leq 0}^{\fil\leq 0}} \ar[d, swap]{}{\incl_{\fil\geq 0, \fil\leq 0}^{\fil\geq 0}} & & \PC(F)^{\fil\leq 0} \ar[d]{}{\incl_{\fil\leq 0}} \\
	\PC(F)^{\fil\geq 0} \ar[rr, swap]{}{\incl_{\fil\geq 0}} & & \PC(F)
\end{tikzcd}
\]
with, of course, $\PC(F)$ a filtered triangulated category.
\end{remark}

We now recall what it means to have a filtered category of a triangulated category.
\begin{definition}[{\cite[{Definition A.1.c}]{Beilinson}}]
Let $\Cscr$ be a triangulated category. A filtered category over $\Cscr$ (also known as an $f$-category over $\Cscr$) is a filtered category $\Tscr$ together with a triangulated equivalence of triangulated categories
\[
i:\Cscr \to \Tscr^{\fil\geq 0} \cap \Tscr^{\fil\leq 0}.
\]
\end{definition}
	
We now show that if we have a filtered triangulated pseudofunctor $F$ and a triangulated pseudofunctor $T$ for which each fibre category $F(X)$ is an $f$-category over $T(X)$ for which the requisite equivalences vary pseudonaturally, then $\PC(F)$ is an $f$-category over $\PC(T)$.
	
\begin{proposition}\label{Prop: Section Pseudocone Realizations: When PCF is filtered over PCT}
Let $F:\Cscr^{\op} \to \fCat$ be a filtered triangulated pseudofunctor and let $T:\Cscr^{\op} \to \fCat$ be a triangulated. If:
\begin{enumerate}
	\item For each $X \in \Cscr_0$ we have triangulated equivalences of categories
	\[
	\quot{i}{X}:T(X) \xrightarrow{\simeq} F(X)^{\fil\geq 0} \cap F(X)^{\fil\leq 0};
	\]
	\item The equivalences $\quot{i}{X}$ vary pseudonaturally in $\Cscr^{\op}$;
\end{enumerate}
then $\PC(F)$ is an $f$-category over $\PC(T)$.
\end{proposition}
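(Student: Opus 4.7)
The plan is to combine Proposition \ref{Prop: Section Pseudocone Realizations: PCF filtered}, which gives that $\PC(F)$ is a filtered triangulated category, with the identification
\[
\PC(F)^{\fil\geq 0} \cap \PC(F)^{\fil\leq 0} = \PC\left(F^{\fil\geq 0} \cap F^{\fil\leq 0}\right)
\]
recorded in Remark \ref{Remark: Section Pseudocone Realizations: The leq and geq pseudocones}. What therefore remains is to produce a triangulated equivalence $\PC(T) \xrightarrow{\simeq} \PC(F^{\fil\geq 0} \cap F^{\fil\leq 0})$.

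First, I would package the hypothesis into a single pseudonatural transformation $\ul{i}: T \Rightarrow F^{\fil\geq 0} \cap F^{\fil\leq 0}$. The components are the given functors $\quot{i}{X}$, and the pseudonatural variation hypothesis supplies invertible $2$-cells
\[
\quot{i}{f}: \left(F^{\fil\geq 0} \cap F^{\fil\leq 0}\right)(f) \circ \quot{i}{Y} \Rightarrow \quot{i}{X} \circ T(f)
\]
for every morphism $f: X \to Y$ in $\Cscr$, whose coherence against the compositors of $T$ and $F^{\fil\geq 0} \cap F^{\fil\leq 0}$ is exactly the cocycle axiom required of $\ul{i}$. Applying the pseudoconification $2$-functor $\PC$ of \cite[Lemma 4.1.13]{Monograph} then produces a functor
\[
\PC(\ul{i}): \PC(T) \to \PC\left(F^{\fil\geq 0} \cap F^{\fil\leq 0}\right),
\]
and because each $\quot{i}{X}$ is triangulated, this induced functor preserves distinguished triangles by construction.

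The main step is to promote $\ul{i}$ to a pseudonatural equivalence in $\Bicat(\Cscr^{\op},\fCat)$. For each $X$, choose a triangulated quasi-inverse $\quot{j}{X}: F(X)^{\fil\geq 0} \cap F(X)^{\fil\leq 0} \to T(X)$ together with unit and counit natural isomorphisms $\eta_X$ and $\epsilon_X$. The pseudonaturality of $\ul{i}$ then forces, up to inserting the relevant compositor data, a unique assembly of these fibrewise choices into a pseudonatural transformation $\ul{j}: F^{\fil\geq 0} \cap F^{\fil\leq 0} \Rightarrow T$ together with invertible modifications $\ul{\eta}: \id_T \Rightarrow \ul{j} \circ \ul{i}$ and $\ul{\epsilon}: \ul{i} \circ \ul{j} \Rightarrow \id_{F^{\fil\geq 0} \cap F^{\fil\leq 0}}$. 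Since $\PC$ is a $2$-functor, it preserves equivalences, so $\PC(\ul{i})$ is an equivalence of triangulated categories, completing the proof.

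The main obstacle will be the coherence bookkeeping needed to construct $\ul{j}$ and the two modifications $\ul{\eta}$, $\ul{\epsilon}$: one must verify that the naive fibrewise choices of quasi-inverses and (co)units can be adjusted so that the pseudonaturality squares for $\ul{j}$ commute on the nose once the compositors for $F^{\fil\geq 0} \cap F^{\fil\leq 0}$ and $T$ have been absorbed. This is the only place where the hypothesis that the $\quot{i}{X}$ vary pseudonaturally (rather than merely fibrewise) is used in an essential way, and it is standard $2$-categorical machinery once the pseudonaturality data of $\ul{i}$ has been unwound.
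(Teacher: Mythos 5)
Your proposal is correct and follows essentially the same route as the paper: assemble the fibrewise equivalences $\quot{i}{X}$ into a pseudonatural equivalence $\ul{i}:T \Rightarrow F^{\fil\geq 0}\cap F^{\fil\leq 0}$, apply the pseudoconification $2$-functor $\PC$, and observe that triangulatedness of $\PC(\ul{i})$ follows from the fibrewise triangulations together with \cite[Theorem 5.1.10]{Monograph}. You spell out the construction of the inverse pseudonatural transformation and the unit/counit modifications more explicitly than the paper does, but the argument is the same.
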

\begin{proof}
The assumptions of the proposition imply that there is a pseudonatural equivalence
\[
i:T \xRightarrow{\simeq} F^{\fil\geq 0} \cap F^{\fil\leq 0}
\]
which extends to a triangulated equivalence of categories
\[
\PC(i):\PC(T) \xrightarrow{\simeq} \PC\left(F^{\fil\geq 0} \cap F^{\fil\leq 0}\right) = \PC(F)^{\fil\geq 0} \cap \PC(F)^{\fil\leq 0};
\]
that $\PC(i)$ is triangulated follows from the fact that each functor $\quot{i}{X}$ is triangulated and the triangulations on $\PC(T)$ and $\PC(F)^{\fil\geq 0} \cap \PC(F)^{\fil\leq 0}$ are locally induced by the triangulations on the categories $T(X)$ and $F(X)^{\fil\geq 0} \cap F(X)^{\fil\leq 0}$ by \cite[{Theorem 5.1.10}]{Monograph}.
\end{proof}	

This motivates the definition of what we call an $f$-pseudofunctor over another triangulated pseudofunctor. 
\begin{definition}\label{Defn: Section Pseudocone Realizations: fpseudofunctor}
Let $F:\Cscr^{\op} \to \fCat$ be a filtered triangulated pseudofunctor and let $T:\Cscr^{\op} \to \fCat$ be a triangulated. If:
\begin{enumerate}
	\item For each $X \in \Cscr_0$ we have triangulated equivalences of categories
	\[
	\quot{i}{X}:T(X) \xrightarrow{\simeq} F(X)^{\fil\geq 0} \cap F(X)^{\fil\leq 0};
	\]
	\item The equivalences $\quot{i}{X}$ vary pseudonaturally in $\Cscr^{\op}$;
\end{enumerate}
then we say that $F$ is a filtered triangulated pseudofunctor (or an $f$-pseudofunctor) over $T$.
\end{definition}
In this new terminology, Proposition \ref{Prop: Section Pseudocone Realizations: When PCF is filtered over PCT} can be restated as saying that if $F$ is an $f$-pseudofunctor over a triangulated pseudofunctor $T$, then $\PC(F)$ is an $f$-category over the triangulated category $\PC(T)$.

Continuing to follow \cite[{Appendix A}]{Beilinson}, we now assume that we have an $f$-pseudofunctor $F:\Cscr^{\op} \to \fCat$ over a triangulated pseudofunctor $T:\Cscr^{\op} \to \fCat$. Because by Proposition \ref{Prop: Section Pseudocone Realizations: When PCF is filtered over PCT} $\PC(F)$ is an $f$-category over $\PC(T)$, for every $n \in \Z$ there are adjunctions
\[
\begin{tikzcd}
\PC(F)  \ar[rr, swap, bend right = 20, ""{name = D}]{}{\sigma_{\fil\geq n}} & & \PC(F)^{\fil\geq n} \ar[ll, swap, bend right = 20, ""{name = U}]{}{\incl_{\fil\geq n}} \ar[from = U, to = D, symbol = \dashv]
\end{tikzcd}
\]
and
\[
\begin{tikzcd}
	\PC(F)^{\fil\leq n}  \ar[rr, swap, bend right = 20, ""{name = D}]{}{\incl_{\fil\leq n}} & & \PC(F) \ar[ll, swap, bend right = 20, ""{name = U}]{}{\sigma_{\fil\leq n}} \ar[from = U, to = D, symbol = \dashv]
\end{tikzcd}
\]	
for which each of the functors $\sigma_{\fil\geq n}$ and $\sigma_{\fil\leq n}$ are both triangulated by \cite[{Proposition A.3.i}]{Beilinson}. Furthermore, by construction and the fact that $F$ is an $f$-pseudofunctor over $T$, we have local adjunctions
\[
\begin{tikzcd}
	F(X)  \ar[rrr, swap, bend right = 20, ""{name = D}]{}{\quot{\sigma_{\fil\geq n}}{X}} & & & F(X)^{\fil\geq n} \ar[lll, swap, bend right = 20, ""{name = U}]{}{\quot{\incl_{\fil\geq n}}{X}} \ar[from = U, to = D, symbol = \dashv]
\end{tikzcd}
\]
and:
\[
\begin{tikzcd}
	F(X)^{\fil\leq n}  \ar[rrr, swap, bend right = 20, ""{name = D}]{}{\quot{\incl_{\fil\leq n}}{X}} & & & F(X) \ar[lll, swap, bend right = 20, ""{name = U}]{}{\quot{\sigma_{\fil\leq n}}{X}} \ar[from = U, to = D, symbol = \dashv]
\end{tikzcd}
\]
Furthermore, because the functors $\quot{\incl_{\fil\geq n}}{X}$ and $\quot{\incl_{\fil\leq n}}{X}$ are the object functors of pseudonatural transformations $\ul{\incl}_{\fil\geq n}:F^{\fil\geq n} \Rightarrow F$ and $\ul{\incl}_{\fil\leq n}:F^{\fil\leq n} \Rightarrow F$, it follows that the $\quot{\sigma_{\fil\geq n}}{X}$ and $\quot{\sigma_{\fil\geq n}}{X}$ are the object functors of pseudonatural transformations
\[
\ul{\sigma}_{\fil\geq n}:F \Rightarrow F^{\fil\geq n}, \quad \ul{\sigma}_{\fil\leq n}:F \Rightarrow F^{\fil\leq n}.
\]
It then follows from \cite[{Theorem 4.1.17}]{Monograph} that we have adjunctions 
\[
\ul{\incl}_{\fil\geq n} \dashv \ul{\sigma}_{\fil\geq n}:F^{\fil\geq n} \Rightarrow F
\] 
and 
\[
\ul{\sigma}_{\fil\leq n} \dashv \ul{\incl}_{\fil\leq n}:F \Rightarrow F^{\fil\leq n}
\] 
in the $2$-category $\Bicat(\Cscr^{\op},\fCat)$. Consequently we deduce that the adjoints 
\[
\incl_{\fil\geq n} \dashv \sigma_{\fil\geq n}:PC(F)^{\fil\geq n} \to \PC(F)
\]
and
\[
\sigma_{\fil\leq n} \dashv \incl_{\fil\leq n}:\PC(F) \to \PC(F)^{\fil\leq n}
\]
are induced as the psuedoconification of the corresponding adjoints in $\Bicat(\Cscr^{\op},\fCat)$. This allows us to in turn deduce the complete pseudocone analogue of \cite[{Proposition A.3.i}]{Beilinson}.
\begin{proposition}\label{Prop: Section Pseduocone realizations: The theta nisos}
Let $F:\Cscr^{\op} \to \fCat$ be a filtered triangulated pseudofunctor. Then for every $n \in \Z$ there are adjunctions
\[
\incl_{\fil\geq n} \dashv \sigma_{\fil\geq n}:PC(F)^{\fil\geq n} \to \PC(F)
\]
and
\[
\sigma_{\fil\leq n} \dashv \incl_{\fil\leq n}:\PC(F) \to \PC(F)^{\fil\leq n}
\]
for which:
\begin{enumerate}
	\item Each of the functors $\sigma_{\fil\leq n}$ and $\sigma_{\fil\geq n}$ are triangulated.
	\item The functors $\sigma_{\fil\leq n}$ and $\sigma_{\fil\geq n}$ preserve the categories $\PC(F)^{\fil\leq n}$ and $\PC(F)^{\fil\geq n}$, respectively.
	\item For all $m, n \in \Z$ there is a natural isomorphism $\theta_{\fil\leq n}^{\fil\geq m}$ which makes the diagram
	\[
	\begin{tikzcd} 
	\incl_{\fil\geq m} \circ \sigma_{\fil\geq m} \ar[ddd, swap]{}{\eta^{\fil\leq n} \ast (\incl_{\fil\geq m} \circ \sigma_{\fil\geq m})} \ar[r]{}{\epsilon^{\fil\geq m}} & \id_{\PC(F)} \ar[d]{}{\eta^{\fil\leq n}} 	\\
	 & \incl_{\fil\leq n} \circ \sigma_{\fil\leq n} \\
	 & \incl_{\fil\geq m} \circ \sigma_{\fil\geq m} \circ \incl_{\fil\leq n} \circ \sigma_{\fil\leq n} \ar[u, swap]{}{\epsilon^{\fil\geq m} \ast (\incl_{\fil\leq n} \circ \sigma_{\fil\leq n})} \\
	 \incl_{\fil\leq n} \circ \sigma_{\fil\leq n} \circ \incl_{\fil\geq m} \circ \sigma_{\fil\geq m} \ar[ur, swap]{}{\theta_{\fil\leq n}^{\fil\geq m}}
	\end{tikzcd}
	\]
	of functors and natural transformations commute.
\end{enumerate}
\end{proposition}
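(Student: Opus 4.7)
The plan is to prove each assertion by lifting the classical fibrewise picture of \cite[Proposition A.3.i]{Beilinson} through the pseudoconification $2$-functor $\PC$. The adjunctions themselves are already essentially in hand: the paragraphs preceding the statement produce pseudonatural adjunctions $\ul{\incl}_{\fil\geq n} \dashv \ul{\sigma}_{\fil\geq n}$ and $\ul{\sigma}_{\fil\leq n} \dashv \ul{\incl}_{\fil\leq n}$ in $\Bicat(\Cscr^{\op},\fCat)$ via \cite[Theorem 4.1.17]{Monograph}, and applying $\PC$ immediately yields the desired adjunctions at the level of pseudocone categories.

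For (1), I would first invoke \cite[Proposition A.3.i]{Beilinson} at each fibre category $F(X)$ to conclude that the component functors $\quot{\sigma_{\fil\geq n}}{X}$ and $\quot{\sigma_{\fil\leq n}}{X}$ are triangulated. Because the triangulation of $\PC(F)$ is induced fibrewise by \cite[Theorem 5.1.10]{Monograph}, and $\PC$ applied to a componentwise triangulated pseudonatural transformation gives a triangulated functor, the resulting global functors $\sigma_{\fil\geq n} = \PC(\ul{\sigma}_{\fil\geq n})$ and $\sigma_{\fil\leq n} = \PC(\ul{\sigma}_{\fil\leq n})$ are triangulated. For (2), the same pattern applies: by \cite[Proposition A.3.i]{Beilinson} the local truncation functors preserve $F(X)^{\fil\leq n}$ and $F(X)^{\fil\geq n}$, and since $\PC(F)^{\fil\leq n} = \PC(F^{\fil\leq n})$ and $\PC(F)^{\fil\geq n} = \PC(F^{\fil\geq n})$ by Remark \ref{Remark: Section Pseudocone Realizations: The leq and geq pseudocones}, this preservation — which is checked objectwise — propagates directly from the fibres to the pseudocone categories.

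For (3), I would again apply \cite[Proposition A.3.i]{Beilinson} to obtain, for each $X \in \Cscr_0$, a local natural isomorphism $\quot{\theta_{\fil\leq n}^{\fil\geq m}}{X}$ fitting into the analogue of the displayed diagram in $F(X)$. The critical step is to verify that the family $\lbrace \quot{\theta_{\fil\leq n}^{\fil\geq m}}{X} \rbrace_{X \in \Cscr_0}$ assembles into a modification between the appropriate composites of pseudonatural transformations. This reduces to checking one compatibility square per morphism $f:X \to Y$ in $\Cscr$, and it holds because the local natural isomorphisms are canonically constructed from the units and counits of adjunctions which are themselves components of pseudonatural adjunctions, and because the compositor data of $F$ interact coherently with truncation by hypothesis that each $F(f)$ is an $f$-functor. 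Pseudoconifying this modification via $\PC$ yields the desired natural isomorphism $\theta_{\fil\leq n}^{\fil\geq m}$, and the commutativity of the displayed pentagon in $\PC(F)$ reduces to its fibrewise analogue, which is exactly the content of \cite[Proposition A.3.i]{Beilinson}.

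The main obstacle I anticipate is precisely the modification verification in (3): one has to carefully track how the fibrewise natural isomorphisms interact with the transition isomorphisms of a pseudocone, and this forces some bookkeeping with the compositor and unitor data of $F$ as well as with the $f$-functor structure of each $F(f)$. Once the fibrewise picture is organized into a modification, however, every other step is a routine application of the already-established behaviour of $\PC$ on triangulations, $t$-structures, and adjunctions.
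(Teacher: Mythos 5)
Your proposal matches the paper's proof in structure and substance: parts (1) and (2) are verbatim the paper's argument (fibrewise invocation of Beilinson's Proposition A.3.i followed by pseudoconification), and for (3) you correctly isolate the modification verification as the critical step. The one place you diverge slightly in justification is that the paper closes the modification square in (3) by appealing to the \emph{uniqueness} of the fibrewise $\quot{\theta_{\fil\leq n}^{\fil\geq m}}{X}$ as the natural isomorphism making the Beilinson diagram commute (so the two composites in the modification square must agree because both satisfy the transported constraint), whereas you propose a more computational route via the explicit unit/counit construction and $f$-functor coherence — both are sound, but the uniqueness argument the paper uses avoids tracking any explicit formulas.
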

\begin{proof}
(1): The triangulation of $\sigma_{\fil\leq n}$ and $\sigma_{\fil\geq n}$ are immediate from \cite[{Proposition A.3.i}]{Beilinson} or, alternatively, the fact that all the object functors of $\sigma_{\fil\leq n} = \PC(\ul{\sigma}_{\fil\leq n})$ and $\sigma_{\fil\geq n} = \PC(\ul{\sigma}_{\fil\geq n})$ are triangulated by \cite[{Proposition A.3.i}]{Beilinson}.

(2): This follows from the fact that for every $X \in \Cscr_0$, by \cite[{Proposition A.3.i}]{Beilinson} the functors $\quot{\sigma_{\fil\leq n}}{X}$ and $\quot{\sigma_{\fil\geq n}}{X}$ preserve $F(X)^{\fil\leq n}$ and $F(X)^{\fil\geq n}$.

(3): By \cite[{Proposition A.3.i}]{Beilinson} for every $X \in \Cscr_0$ there is a unique natural isomorphism $\quot{\theta_{\fil\leq n}^{\fil\geq m}}{X}$ making the diagram
\[
\begin{tikzcd} 
	\quot{\incl}{X}_{\fil\geq m} \circ \quot{\sigma}{X}_{\fil\geq m} \ar[ddd, swap]{}{\quot{\eta}{X}^{\fil\leq n} \ast (\quot{\incl}{X}_{\fil\geq m} \circ \quot{\sigma}{X}_{\fil\geq m})} \ar[r]{}{\quot{\epsilon}{X}^{\fil\geq m}} & \id_{\PC(F)} \ar[d]{}{\quot{\eta}{X}^{\fil\leq n}} 	\\
	& \quot{\incl}{X}_{\fil\leq n} \circ \quot{\sigma}{X}_{\fil\leq n} \\
	& \\
	\quot{\incl}{X}_{\fil\leq n} \circ \quot{\sigma}{X}_{\fil\leq n} \circ \quot{\incl}{X}_{\fil\geq m} \circ \quot{\sigma}{X}_{\fil\geq m} \ar[d, swap]{}{\quot{\theta}{X}_{\fil\leq n}^{\fil\geq m}} \\
	 \quot{\incl}{X}_{\fil\geq m} \circ \quot{\sigma}{X}_{\fil\geq m} \circ \quot{\incl}{X}_{\fil\leq n} \circ \quot{\sigma}{X}_{\fil\leq n} \ar[uuur, bend right = 20, swap]{}{\quot{\epsilon}{X}^{\fil\geq m} \ast (\quot{\incl}{X}_{\fil\leq n} \circ \quot{\sigma}{X}_{\fil\leq n})}
\end{tikzcd}
\]
commute; consequently, if we can show that the collection 
\[
\Theta_{\fil\leq n}^{\fil\geq m} = \left\lbrace \quot{\theta_{\fil\leq n}^{\fil\geq m}}{X} \; | \; X \in \Cscr_0 \right\rbrace
\]
is a modification then setting $\theta_{\fil\leq n}^{\fil\geq m} := \PC(\Theta_{\fil\leq n}^{\fil\geq m})$ completes the proof. To this end note that showing for any $f:X \to Y$ in $\Cscr$, $\Theta_{\fil\leq n}^{\fil\geq n}$ being a modification is equivalent to proving that the equation
\begin{align*}
 &\left(\quot{{\theta}_{\fil\leq n}^{\fil\geq m}}{X} \ast F(f)\right) \circ \quot{\left(\incl_{\fil\leq n} \circ \sigma_{\fil\leq n} \circ \incl_{\fil\geq m} \circ \sigma_{\fil\geq m}\right)}{f} \\
 &= \quot{\left(\incl_{\fil\geq m} \circ \sigma_{\fil\geq m} \circ \incl_{\fil\leq n} \circ \sigma_{\fil\leq n}\right)}{f} \circ \left(F(f) \ast \quot{\theta_{\fil\leq n}^{\fil\geq m}}{Y}\right)
\end{align*}
commutes follows from the naturality and uniqueness of $\quot{\theta_{\fil\leq n}^{\fil\geq m}}{Y}$ and $\quot{\theta_{\fil\leq n}^{\fil\geq m}}{X}$. 
\end{proof}
These natural transformations and functors allow us to define a pseudonatural transformation $F \Rightarrow T$ which object-locally gives the $n$-th graded component of $F(X)$ as a filtered category over $T(X)$.
\begin{lemma}\label{Lemma: Section Pseudocone Realization: Graded PC functor}
Let $F:\Cscr^{\op} \to \fCat$ be a filtered triangulated pseudofunctor over a triangulated pseudofunctor $T:\Cscr^{\op} \to \fCat$ and let $\ul{j}:F^{\fil\geq 0} \cap F^{\fil\leq 0} \Rightarrow T$ be an inverse pseudonatural equivalence to $\ul{i}:T \Rightarrow F^{\fil\geq 0} \cap F^{\fil\leq 0}$. For any $n \in \Z$ there is an $n$-th graded component pseudonatural transformation:
\[
\begin{tikzcd}
\Cscr^{\op} \ar[rr, bend left = 20, ""{name = U}]{}{F} \ar[rr, bend right = 20, swap, ""{name = D}]{}{T} & & \fCat \ar[from = U, to = D, Rightarrow, shorten <= 4pt, shorten >= 4pt]{}{\ul{\gr}_f^n}
\end{tikzcd}
\]
In particular this gives rise to an $n$-th graded component functor $\gr_{\fil}^{n}:\PC(F) \to \PC(T)$.
\end{lemma}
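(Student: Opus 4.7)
The strategy is to assemble $\ul{\gr}_f^n$ as a composite of pseudonatural transformations already in hand. Object-locally, I would define
\[
\quot{\gr_f^n}{X} := \quot{j}{X} \circ \quot{s^{-n}}{X} \circ \quot{\sigma_{\fil\leq n}}{X} \circ \quot{\sigma_{\fil\geq n}}{X}.
\]
The first thing to check is that this is well-typed: $\quot{\sigma_{\fil\geq n}}{X}$ lands in $F(X)^{\fil\geq n}$, then $\quot{\sigma_{\fil\leq n}}{X}$ lands in $F(X)^{\fil\geq n} \cap F(X)^{\fil\leq n}$ by Proposition \ref{Prop: Section Pseduocone realizations: The theta nisos}(2), the shift $\quot{s^{-n}}{X}$ then sends this into $F(X)^{\fil\geq 0} \cap F(X)^{\fil\leq 0}$, and finally $\quot{j}{X}$ maps into $T(X)$.

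Next I would promote this family to a pseudonatural transformation by observing that each of the four constituent families is already the object-component of a pseudonatural transformation: $\ul{\sigma}_{\fil\geq n} : F \Rightarrow F^{\fil\geq n}$ and the analogous $\ul{\sigma}_{\fil\leq n}^{\fil\geq n} : F^{\fil\geq n} \Rightarrow F^{\fil\geq n} \cap F^{\fil\leq n}$ are produced by the discussion preceding Proposition \ref{Prop: Section Pseduocone realizations: The theta nisos} (applied first to $F$ and then to the truncated pseudofunctor $F^{\fil\geq n}$); the shift $\ul{s}^{-n}$ is pseudonatural because each $F(\varphi)$ is a filtered functor and therefore commutes with $s$ (as used already in the proof of Proposition \ref{Prop: Section Pseudocone Realizations: PCF filtered}); and $\ul{j}$ is the given pseudonatural inverse equivalence. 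Since pseudonatural transformations compose in the $2$-category $\Bicat(\Cscr^{\op},\fCat)$, setting
\[
\ul{\gr}_f^n := \ul{j} \ast \ul{s}^{-n} \ast \ul{\sigma}_{\fil\leq n}^{\fil\geq n} \ast \ul{\sigma}_{\fil\geq n}
\]
yields the desired pseudonatural transformation $F \Rightarrow T$. The induced functor on pseudocone categories is then obtained by applying the pseudoconification $2$-functor: $\gr_{\fil}^n := \PC(\ul{\gr}_f^n) : \PC(F) \to \PC(T)$.

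The only real care needed is the bookkeeping of the $2$-cell components $\quot{(\gr_f^n)}{\varphi}$ for morphisms $\varphi : X \to Y$ in $\Cscr$: these arise as the pasting of the four individual $2$-cells attached to the constituent pseudonatural transformations together with the compositor isomorphisms of the intermediate pseudofunctors $F^{\fil\geq n}$ and $F^{\fil\geq n} \cap F^{\fil\leq n}$. This pasting is, however, precisely the composition operation for pseudonatural transformations in $\Bicat(\Cscr^{\op},\fCat)$, so the required pseudonaturality (and its coherence with composition in $\Cscr$) is automatic; no genuinely new verification is required beyond unwinding this formal composition.
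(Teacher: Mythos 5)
Your proof is correct and follows the same route as the paper: define $\ul{\gr}_f^n$ as a composite of the pseudonatural transformations $\ul{\sigma}_{\fil\geq n}$, $\ul{\sigma}_{\fil\leq n}$, $\ul{s}^{-n}$, and $\ul{j}$ so that object-locally one recovers Beilinson's graded functor of \cite[Proposition A.3.i]{Beilinson}, then apply $\PC$. The only cosmetic difference is that the paper writes out the inclusion pseudonatural transformations $\ul{\incl}_{\fil\geq n}$, $\ul{\incl}_{\fil\leq n}$ explicitly where you instead use the restricted $\ul{\sigma}_{\fil\leq n}^{\fil\geq n}:F^{\fil\geq n}\Rightarrow F^{\fil\geq n}\cap F^{\fil\leq n}$ to track codomains; these agree on objects and the bookkeeping is equivalent.
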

\begin{proof}
We define $\ul{\gr}_f^n$ by
\[
\ul{\gr}_f^n := \ul{j} \circ \ul{s}^{-n} \circ \ul{\incl}_{\fil\leq n} \circ \ul{\sigma}_{\fil\leq n} \circ \ul{\incl}_{\fil\geq n} \circ \ul{\sigma}_{\fil\geq n}.
\]
That this is justified as the $n$-th graded component pseudonatural transformation follows both from the fact that for all $X \in \Cscr_0$, the $X$-local component is given by the functor
\[
\quot{\ul{\gr}_f^n}{X} = \quot{j}{X} \circ \quot{{s}^{-n}}{X} \circ \quot{{\incl}_{\fil\leq n}}{X} \circ \quot{{\sigma}_{\fil\leq n}}{X} \circ \quot{{\incl}_{\fil\geq n}}{X} \circ \ul{\sigma}_{\fil\geq n},
\]
which is the $n$-th graded functor $F(X) \to T(X)$ described in \cite[{Proposition A.3.i}]{Beilinson}. The final statement of the lemma follows by taking $\gr_{\fil}^n := \PC(\gr_{\fil}^n)$.
\end{proof}
\begin{remark}\label{Remark: The Cap sigma notation}
As we have seen, it is frequently important to use the functors $\sigma_{\fil \leq n}$ and $\sigma_{\fil \geq n}$ followed by composition with the inclusions $\incl_{\fil \leq n}$ and $\incl_{\fil \geq n}$. Because of this, we let $\Sigma_{\fil \geq k} := \incl_{\fil \geq k} \circ \sigma_{\fil \geq k}$ and $\Sigma_{\fil \leq k} := \incl_{\fil \leq k} \circ \sigma_{\fil \geq k}$ for all $k \in \Z$ in order to save space while also being categorically rigorous. Note that this also has the benefit of allowing us to rewrite $\gr_{\fil}^n$ as $j \circ s^{-n} \circ \Sigma_{\fil\leq n} \circ \Sigma_{\fil\geq n}.$
\end{remark}

We now give a pseudoconical version of \cite[{Proposition A.3.ii}]{Beilinson}; however, we don't \emph{just} want to know that there is a distinguished triangle
\[
\begin{tikzcd}
\left(\Sigma_{\fil\geq 1}\right)(A) \ar[r]{}{\epsilon^{\fil\geq 1}_A} & A \ar[r]{}{\eta_{A}^{\fil\leq 0}} & \left(\Sigma_{\fil\leq 0}\right)(A) \ar[d]{}{d} \\
 & & \left(\Sigma_{\fil\geq 1}\right)(A)[1]
\end{tikzcd}
\]
in $\PC(F)$ for any object $A$. Instead, we want to know that this distinguished triangle comes from a family of distinguished triangles in each category $F(X)$.

\begin{proposition}\label{Prop: Section Pseudocone Realizations: Dist triangle pseudocone version}
Let $F:\Cscr^{\op} \to \fCat$ be a filtered triangulated pseudofunctor over a triangulated pseudofunctor $T:\Cscr^{\op} \to \fCat$. If $A$ is any object in $\PC(F)_0$ then there is a unique morphism $d:(\Sigma_{\fil\leq 0})(A) \to (\Sigma_{\fil\geq 1})(A)[1]$ for which there is a distinguished triangle:
\[
\begin{tikzcd}
	(\Sigma_{\fil\geq 1})(A) \ar[r]{}{\epsilon^{\fil\geq 1}_{A}} & A \ar[r]{}{\eta_{A}^{\fil\leq 0}} & (\Sigma_{\fil\leq 0})(A) \ar[d]{}{d} \\
	& & (\Sigma_{\fil\geq 1})(A)[1]
\end{tikzcd}
\]
Furthermore, for any $X \in \Cscr_0$ there are distinguished triangles
\[
\begin{tikzcd}
\left(\quot{\Sigma_{\fil\geq 1}}{X}\right)\left(\quot{A}{X}\right) \ar[r]{}{\quot{\epsilon^{\fil\geq 1}}{X}} & \quot{A}{X} \ar[r]{}{\quot{\eta^{\fil\leq 0}}{X}} & \left(\quot{\Sigma_{\fil\leq 0}}{X}\right)\left(\quot{A}{X}\right) \ar[d]{}{\quot{d}{X}} \\
 & & \left(\quot{\Sigma_{\fil\geq 1}}{X}\right)\left(\quot{A}{X}\right)[1]
\end{tikzcd}
\] 
and if $f:X \to Y$ is any morphism in $\Cscr$ the diagram
\[
\begin{tikzcd}
\left(\quot{\Sigma_{\fil\geq 1}}{X}\right)\left(F(f)(\quot{A}{Y})\right) \ar[d, swap]{}{\quot{\epsilon^{\fil\geq 1}_{F(f)A}}{X}} \ar[r]{}{} & \left(\quot{\Sigma_{\fil\geq 1}}{X}\right)\left(\quot{A}{X}\right) \ar[d]{}{\quot{\epsilon_A^{\fil\geq 1}}{X}} \\
\quot{A}{Y} \ar[d, swap]{}{\quot{\eta_{F(f)A}^{\fil\leq 0}}{X}} \ar[r]{}{\tau_f^A} & \quot{A}{X} \ar[d]{}{\quot{\eta^{\fil\leq 0}}{X}} \\
\left(\quot{\Sigma_{\fil\leq 0}}{X}\right)\left(F(f)\quot{A}{Y}\right) \ar[d, swap]{}{F(f)\left(\quot{d}{Y}\right)} \ar[r] & \left(\quot{\Sigma_{\fil\leq 0}}{X}\right)\left(\quot{A}{X}\right) \ar[d]{}{\quot{d}{X}} \\
\left(\quot{\Sigma_{\fil\geq 1}}{X}\right)\left(F(f)(\quot{A}{Y})\right) \ar[r]{}{} & \left(\quot{\Sigma_{\fil\geq 1}}{X}\right)\left(\quot{A}{X}\right)[1]
\end{tikzcd}
\]
commutes with every horizontal map the corresponding functor applied to $\tau_f^A$ or $\tau_f^{A[1]}$.
\end{proposition}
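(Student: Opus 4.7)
The plan is to first produce the local distinguished triangles fibrewise and then glue them to a morphism of pseudocones, after which all claims reduce to pseudoconical bookkeeping.

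First, fix $A \in \PC(F)_0$ and apply Beilinson's \cite[Proposition A.3.ii]{Beilinson} inside each triangulated category $F(X)$ to the object $\quot{A}{X}$. This produces, for every $X \in \Cscr_0$, a unique morphism
\[
\quot{d}{X} : \left(\quot{\Sigma_{\fil\leq 0}}{X}\right)\left(\quot{A}{X}\right) \longrightarrow \left(\quot{\Sigma_{\fil\geq 1}}{X}\right)\left(\quot{A}{X}\right)[1]
\]
completing the counit/unit pair $(\quot{\epsilon^{\fil\geq 1}}{X}, \quot{\eta^{\fil\leq 0}}{X})$ to a distinguished triangle in $F(X)$. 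This settles the existence of the fibrewise triangles in the statement.

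Next I would show that the collection $\{\quot{d}{X}\}_{X \in \Cscr_0}$ defines a morphism $d$ of pseudocones, i.e.\@ that for each $f : X \to Y$ in $\Cscr$ the square
\[
\begin{tikzcd}
F(f)\bigl(\left(\quot{\Sigma_{\fil\leq 0}}{Y}\right)(\quot{A}{Y})\bigr) \ar[r] \ar[d, swap]{}{F(f)(\quot{d}{Y})} & \left(\quot{\Sigma_{\fil\leq 0}}{X}\right)(\quot{A}{X}) \ar[d]{}{\quot{d}{X}} \\
F(f)\bigl(\left(\quot{\Sigma_{\fil\geq 1}}{Y}\right)(\quot{A}{Y})[1]\bigr) \ar[r] & \left(\quot{\Sigma_{\fil\geq 1}}{X}\right)(\quot{A}{X})[1]
\end{tikzcd}
\]
commutes, where the horizontal arrows are the transition isomorphisms of the pseudocones $(\Sigma_{\fil\leq 0})(A)$ and $(\Sigma_{\fil\geq 1})(A)[1]$ respectively (these exist because $\Sigma_{\fil\leq 0}$, $\Sigma_{\fil\geq 1}$, and the shift $[1]$ are all object functors of pseudonatural transformations, cf.\@ the discussion preceding Proposition \ref{Prop: Section Pseduocone realizations: The theta nisos}). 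The key tool here is uniqueness: $F(f)$ is triangulated and $F(f)(\quot{d}{Y})$ fits into the distinguished triangle obtained by applying $F(f)$ to the triangle at $Y$; after conjugating by the transition isomorphisms $\tau_f^{A}$ (together with the compositor data of the pseudofunctors $F^{\fil\leq 0}, F^{\fil\geq 1}$ and the pseudonaturality of $\ul{\epsilon}^{\fil\geq 1}$ and $\ul{\eta}^{\fil\leq 0}$) we obtain a second completion of the pair $(\quot{\epsilon^{\fil\geq 1}_{A}}{X}, \quot{\eta^{\fil\leq 0}_{A}}{X})$ to a distinguished triangle at $X$, so the uniqueness clause of Beilinson's proposition forces this completion to agree with $\quot{d}{X}$.

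Once $d = \{\quot{d}{X}\}$ is known to be a morphism in $\PC(F)$, the triangle
\[
(\Sigma_{\fil\geq 1})(A) \xrightarrow{\epsilon_A^{\fil\geq 1}} A \xrightarrow{\eta_{A}^{\fil\leq 0}} (\Sigma_{\fil\leq 0})(A) \xrightarrow{d} (\Sigma_{\fil\geq 1})(A)[1]
\]
is distinguished in $\PC(F)$ because, by \cite[Theorem 5.1.10]{Monograph}, distinguished triangles in $\PC(F)$ are precisely those diagrams whose $X$-component is distinguished in $F(X)$ for every $X$. Uniqueness of $d$ in $\PC(F)$ follows from the fibrewise uniqueness: any other completion would give a fibrewise completion, which must coincide with $\quot{d}{X}$ for each $X$. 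Finally, the large commuting ladder at the end of the statement is exactly the assertion that $\epsilon^{\fil\geq 1}_A$, $\eta^{\fil\leq 0}_A$, and $d$ are morphisms of pseudocones together with the pseudonaturality of the shift; this is what we have just verified. The main obstacle is the middle step: checking, without diagram-chase blindness, that the naturality square for $d$ truly follows from the uniqueness of completion of a distinguished pair, where the nontrivial input is the interplay between the pseudonaturality compositors of $\ul{\Sigma}_{\fil\leq 0}$ and $\ul{\Sigma}_{\fil\geq 1}$ and the transition isomorphism $\tau_f^A$.
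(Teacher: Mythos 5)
Your proof is correct, but it runs in the opposite direction from the paper's. The paper applies Beilinson's Proposition A.3.ii \emph{directly to the filtered triangulated category $\PC(F)$} --- which is already known to be an $f$-category over $\PC(T)$ by Proposition \ref{Prop: Section Pseudocone Realizations: When PCF is filtered over PCT} --- to get the global morphism $d$ and the distinguished triangle in one stroke, and then \emph{reads off} the local triangles and local uniqueness from the definition of the triangulation on $\PC(F)$ (a triangle is distinguished iff each $X$-component is). The commuting ladder then follows immediately from $d$, $\epsilon^{\fil\geq 1}_A$, $\eta^{\fil\leq 0}_A$ being morphisms in $\PC(F)$, i.e.\@ morphisms of pseudocones. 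You instead go bottom-up: construct each $\quot{d}{X}$ fibrewise via Beilinson A.3.ii in $F(X)$, then show the collection is a morphism of pseudocones by a uniqueness argument --- applying the triangulated functor $F(f)$ to the triangle at $Y$, conjugating by transition/compositor/pseudonaturality isomorphisms to get a second completion at $X$, and invoking the uniqueness clause of A.3.ii to force agreement with $\quot{d}{X}$. This is sound and in fact makes the content of the naturality square more explicit, but it costs an extra gluing step that the paper sidesteps entirely by working at the level of $\PC(F)$ first; the trade-off is that the paper's route leans more heavily on Proposition \ref{Prop: Section Pseudocone Realizations: When PCF is filtered over PCT} and \cite[Theorem 5.1.10]{Monograph}, whereas yours re-derives part of that structure by hand.
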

\begin{proof}
Applying \cite[{Proposition A.3.ii}]{Beilinson} gives a unique morphism $d:(\Sigma_{\fil\leq 0})(A) \to (\Sigma_{\fil\geq 1})(A)[1]$ for which there is a distinguished triangle
\[
\begin{tikzcd}
(\Sigma_{\fil\geq 1})(A) \ar[r]{}{\epsilon^{\fil\geq 1}_{A}} & A \ar[r]{}{\eta_{A}^{\fil\leq 0}} & (\Sigma_{\fil\leq 0})(A) \ar[d]{}{d} \\
 & & (\Sigma_{\fil\geq 1})(A)[1]
\end{tikzcd}
\]
in $\PC(F)$. By the construction of the triangulation on $\PC(F)$ given in \cite[{Theorem 5.1.10}]{Monograph} determined by saying a triangle is distinguished if and only if the corresponding object-local triangles are distinguished, we see that for every object $X$ of $\Cscr$ there is then a distinguished triangle:
\[
\begin{tikzcd}
	\left(\quot{\Sigma_{\fil\geq 1}}{X}\right)\left(\quot{A}{X}\right) \ar[r]{}{\quot{\epsilon^{\fil\geq 1}}{X}} & \quot{A}{X} \ar[r]{}{\quot{\eta^{\fil\leq 0}}{X}} & \left(\quot{\Sigma_{\fil\leq 0}}{X}\right)\left(\quot{A}{X}\right) \ar[d]{}{\quot{d}{X}} \\
	& & \left(\quot{\Sigma_{\fil\geq 1}}{X}\right)\left(\quot{A}{X}\right)[1]
\end{tikzcd}
\]
Note that the map $\quot{d}{X}$ is then the $X$-local version of the unique morphism of \cite[{Proposition A.3.ii}]{Beilinson}. Now because $A$ is an object of $\PC(F)$, for any morphism $f:X \to Y$ in $\Cscr$ the diagram
\[
\begin{tikzcd}
	F(f)\left(\left(\quot{\Sigma_{\fil\geq 1}}{Y}\right)\left(\quot{A}{Y}\right)\right) \ar[d, swap]{}{F(f)\left(\quot{\epsilon^{\fil\geq 1}_{A}}{Y}\right)} \ar[r]{}{} & \left(\quot{\Sigma_{\fil\geq 1}}{X}\right)\left(\quot{A}{X}\right) \ar[d]{}{\quot{\epsilon_A^{\fil\geq 1}}{X}} \\
	\quot{A}{Y} \ar[d, swap]{}{F(f)\left(\quot{\eta_{A}^{\fil\leq 0}}{Y}\right)} \ar[r]{}{\tau_f^A} & \quot{A}{X} \ar[d]{}{\quot{\eta^{\fil\leq 0}}{X}} \\
	F(f)\left(\quot{\Sigma_{\fil\leq 0}}{Y}\right)\left(\quot{A}{Y}\right) \ar[d, swap]{}{\quot{d_{F(f)A}}{X}} \ar[r] & \left(\quot{\Sigma_{\fil\leq 0}}{X}\right)\left(\quot{A}{X}\right) \ar[d]{}{\quot{d}{X}} \\
	F(f)\left(\left(\quot{\Sigma_{\fil\geq 1}}{Y}\right)\left((\quot{A}{Y})\right)\right) \ar[r]{}{} & \left(\quot{\Sigma_{\fil\geq 1}}{X}\right)\left(\quot{A}{X}\right)[1]
\end{tikzcd}
\]
commutes with each horizontal arrow a transition isomorphism. Finally the commutativity of the last stated diagram in the lemma follows by using pseudonatural transformation commutativity witness isomorphisms.
\end{proof}

We now want to build a pseudonatural transformation
\[
\begin{tikzcd}
	\Cscr^{\op} \ar[rr, bend left = 20, ""{name = U}]{}{F} \ar[rr, bend right = 20, swap, ""{name = D}]{}{T} & & \fCat \ar[from = U, to = D, Rightarrow, shorten <= 4pt, shorten >= 4pt]{}{\ul{\omega}}
\end{tikzcd}
\]
which interacts well with each inclusion functor, as this transformation is crucial to construction of the pseudocone realization functor in the same way that the corresponding functor $\omega$ allows us to build the realization functor in \cite{Beilinson}. For this we recall the construction and description of the $\omega$ functors in the conventional case.
\begin{proposition}[{\cite[{Proposition A.3.iii}]{Beilinson}}]\label{Prop: Section Pseudocone Realization: Omega functors in Beilinson}
Let $\Cscr$ be a filtered triangulated category over a triangulated category $\Ascr$. There is then a functor $\omega:\Cscr \to \Ascr$ such that:
\begin{enumerate}
	\item There is an adjunction:
	\[
	\begin{tikzcd}
	\Cscr^{\fil\leq 0} \ar[rrr, bend right = 20, swap, ""{name = D}]{}{\omega \circ \incl_{\fil\leq 0}} & & & \Ascr \ar[lll, bend right = 20, swap, ""{name = U}]{}{\incl_{\fil\geq 0, \fil\leq 0}^{\fil\leq 0} \circ i} \ar[from = U, to = D, symbol = \dashv]
	\end{tikzcd}
	\] 
	\item There is an adjunction:
		\[
	\begin{tikzcd}
		\Ascr \ar[rrr, bend right = 20, swap, ""{name = D}]{}{\incl_{\fil\geq 0, \fil\leq 0}^{\fil\geq 0} \circ i} & & & \Cscr^{\fil\geq 0} \ar[lll, bend right = 20, swap, ""{name = U}]{}{\omega \circ \incl_{\fil\geq 0}} \ar[from = U, to = D, symbol = \dashv]
	\end{tikzcd}
	\] 
	\item For any object $X$ of $\Ascr$, the morphism $\omega(\alpha_{iX}):\omega(iX) \to \omega(s(iX))$ is an isomorphism.
	\item For all objects $X$ of $\Cscr^{\fil\leq 0}$ and all objects $Y$ of $F^{\fil\geq 0}$, the map
	\[
	\Cscr(A,B) \to \Ascr(\omega(A), \omega(B))
	\]
	given by $\varphi \mapsto \omega(\varphi)$ is an isomorphism.
\end{enumerate}
\end{proposition}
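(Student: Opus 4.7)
The plan is to construct $\omega$ in stages: first on the intersection $\Cscr^{\fil\geq 0} \cap \Cscr^{\fil\leq 0}$, then extend to each of $\Cscr^{\fil\leq 0}$ and $\Cscr^{\fil\geq 0}$, and finally to all of $\Cscr$ via the exhaustion $\Cscr = \bigcup_n \Cscr^{\fil\leq n}$ and the shift $s$. On the intersection set $\omega := j$, the inverse equivalence of $i$. For $A \in \Cscr^{\fil\leq 0}$ set $\omega(A) := j(\Sigma_{\fil\geq 0}(A))$, which is well-defined because the canonical isomorphism $\theta_{\fil\leq 0}^{\fil\geq 0}$ of Proposition \ref{Prop: Section Pseduocone realizations: The theta nisos}(3) shows that $\Sigma_{\fil\geq 0}(A) = \Sigma_{\fil\geq 0}\Sigma_{\fil\leq 0}(A) \cong \Sigma_{\fil\leq 0}\Sigma_{\fil\geq 0}(A)$ lies (up to isomorphism) in $\Cscr^{\fil\geq 0} \cap \Cscr^{\fil\leq 0}$; dually, on $\Cscr^{\fil\geq 0}$ set $\omega(A) := j(\Sigma_{\fil\leq 0}(A))$. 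The two definitions visibly agree on the intersection.

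The adjunctions (1) and (2) I would establish by the natural chain of bijections
\[
\Cscr^{\fil\leq 0}(iX, A) = \Cscr(iX, A) \cong \Cscr^{\fil\geq 0}(iX, \Sigma_{\fil\geq 0}(A)) \cong \Ascr(X, \omega(A)),
\]
where the first nontrivial step uses $iX \in \Cscr^{\fil\geq 0}$ with the adjunction $\incl_{\fil\geq 0} \dashv \sigma_{\fil\geq 0}$ and the second applies the equivalence $i \dashv j$. Naturality in both variables is automatic, and the adjunction in (2) is established dually.

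To extend $\omega$ to all of $\Cscr$, given $A$ choose $n$ with $A \in \Cscr^{\fil\leq n}$ and set $\omega(A) := \omega(s^{-n}(A))$; well-definedness (independence of $n$) reduces to a natural isomorphism $\omega \circ s \cong \omega$, which is precisely property (3). I would prove (3) by direct computation: $\omega(iX) = X$ canonically, and the axiom $\alpha = s \ast \alpha \ast s^{-1}$ together with the universality of the adjunctions in (1)-(2) forces $\omega(\alpha_{iX})$ to correspond to the identity on $X$, hence to be an isomorphism.

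The main obstacle is property (4). The plan is to apply the distinguished triangle
\[
\Sigma_{\fil\geq 1}(A) \to A \to \Sigma_{\fil\leq 0}(A) \to \Sigma_{\fil\geq 1}(A)[1]
\]
(the filtered-category version underlying Proposition \ref{Prop: Section Pseudocone Realizations: Dist triangle pseudocone version}) to both $A$ and $B$, and to exploit the Hom-vanishing $\Cscr(X,Y) = 0$ for $X \in \Cscr^{\fil\geq 1}$, $Y \in \Cscr^{\fil\leq 0}$, together with the $\alpha$-induced isomorphisms $\Cscr(sY,X) \cong \Cscr(Y,X) \cong \Cscr(Y, s^{-1}X)$ from axiom (5) of the definition of a filtered category. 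This should reduce $\Cscr(A,B)$ to a Hom computation in $\Cscr^{\fil\geq 0} \cap \Cscr^{\fil\leq 0}$, where the desired isomorphism holds because $\omega|_{\Cscr^{\fil\geq 0} \cap \Cscr^{\fil\leq 0}} = j$ is an equivalence. The delicate part is verifying that the resulting identifications respect the naturality constraints imposed by the adjunctions in (1) and (2); this should follow from naturality of the unit and counit of the $i \dashv j$ equivalence.
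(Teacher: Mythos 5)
The paper does not prove this statement --- it is recalled directly from Beilinson's Appendix~A as Proposition~A.3.iii (introduced with the sentence ``we recall the construction and description of the $\omega$ functors in the conventional case'') and is then used as a black box when promoting $\omega$ to a pseudonatural transformation. There is therefore no in-paper argument to compare yours against; what follows is a critique of your sketch on its own terms.

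Your outline is the right one for Beilinson's construction ($\omega := j$ on the intersection, $\omega := j \circ \Sigma_{\fil\geq 0}$ on $\Cscr^{\fil\leq 0}$, the dual formula on $\Cscr^{\fil\geq 0}$, then spread to all of $\Cscr$ via $s$), and the adjunction chain you write for (1) is correct once one knows that $\sigma_{\fil\geq 0}$ carries $\Cscr^{\fil\leq 0}$ into the intersection. Two points need to be repaired, though. First, to extend $\omega$ beyond $\Cscr^{\fil\leq 0}$ you need $\omega(\alpha_A)$ invertible for \emph{every} $A$, not just $A = iX$; this must be deduced from the $\alpha$-invertibility clause (axiom (5)) of the definition of a filtered triangulated category, together with the fact that $\sigma_{\fil\geq 0}(A)$ lies in the intersection, and logically it has to precede and imply (3) rather than be cited from it --- otherwise the extension step is circular. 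Second, your d\'evissage plan for (4) applies the triangle $\Sigma_{\fil\geq 1}(A) \to A \to \Sigma_{\fil\leq 0}(A)$ to $A \in \Cscr^{\fil\leq 0}$, but that triangle is degenerate for such $A$: the vanishing $\Cscr(X,Y) = 0$ for $X \in \Cscr^{\fil\geq 1}$, $Y \in \Cscr^{\fil\leq 0}$ forces $\sigma_{\fil\geq 1}(A) = 0$, so it carries no information. To reduce $\Cscr(A,B)$ to a Hom in the intersection you must use the shifted triangle $\Sigma_{\fil\geq 0}(A) \to A \to \Sigma_{\fil\leq -1}(A)$ on the source and the nondegenerate $\Sigma_{\fil\geq 1}(B) \to B \to \Sigma_{\fil\leq 0}(B)$ on the target $B \in \Cscr^{\fil\geq 0}$, then combine the vanishing with the $\alpha$-induced Hom isomorphisms. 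That two-sided reduction, and the check that it is compatible with the units and counits of (1)--(2), is the genuine content of (4) and cannot simply be flagged as ``the delicate part.''
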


Because of this proposition, to construct the pseudonatural version of $\omega$ (and hence the pseudoconical version by applying the $2$-functor $\PC$) we need to prove that these functors $\omega$ vary when in the presence of appropriate structure. With this in mind, assume that we have a filtered triangulated pseudofunctor $F:\Cscr^{\op} \to \fCat$ over a triangulated pseudofunctor $T:\Cscr^{\op} \to \fCat$. Asking that the $\omega$ vary suitably asks that if we write $\quot{\omega}{Z}:F(Z) \to T(Z)$ for each object $Z$ in $\Cscr$ as the functor of Proposition \ref{Prop: Section Pseudocone Realization: Omega functors in Beilinson}, then for any morphism $f:X \to Y$ in $\Cscr$, there be a natural isomorphism $\quot{\omega}{f}$ fitting into the $2$-cell
\[
\begin{tikzcd}
F(Y) \ar[r, ""{name = U}]{}{F(f)} \ar[d, swap]{}{\quot{\omega}{Y}} & F(X) \ar[d]{}{\quot{\omega}{X}} \\
T(Y) \ar[r, swap, ""{name = D}]{}{T(f)} & T(X) \ar[from = U, to = D, Rightarrow, shorten <= 4pt, shorten >= 4pt]{}{\quot{\omega}{f}}
\end{tikzcd}
\]
which varies pseudonaturally in $\Cscr^{\op}$. If they all exist, ensuring the adjoints are defined so as to be pseudonatural is no major issue; adjoints are unique up to unique isomorphism, so we can always redefine them if necessary to satisfy the coherence requirements. As such we simply need to ensure their existence. For this it is worth noting that since the functors $\omega$ are uniquely determined by either properties (1) and (3) or (2) and (3) in Proposition \ref{Prop: Section Pseudocone Realization: Omega functors in Beilinson}, it suffices in this case to show the existence of a natural isomorphism $\quot{\omega^{\fil\leq 0}}{f}$ or $\quot{\omega^{\fil\geq 0}}{f}$ and then deduce the existence of $\quot{\omega}{f}$ by extending. To this end note that since each of $\incl_{\fil\geq 0, \fil\leq 0}^{\fil\leq 0}$, $\incl_{\fil\leq 0}$, and $i$ are psueodnatural transformations we have invertible $2$-cells
\[
\begin{tikzcd}
T(Y) \ar[rr, ""{name = U}]{}{T(f)} \ar[d, swap]{}{\quot{\incl_{\fil\geq 0, \fil\leq 0}^{\fil\leq 0}}{Y}\circ\quot{i}{Y}} & & T(X) \ar[d]{}{\quot{\incl_{\fil\geq 0, \fil\leq 0}^{\fil\leq 0}}{X}\circ\quot{i}{X}} \\
F(Y)^{\fil\leq 0} \ar[rr, swap, ""{name = D}]{}{F^{\fil\leq 0}(f)} & & F(X)^{\fil\leq 0} \ar[from = U, to = D, Rightarrow, shorten <= 4pt, shorten >= 4pt]{}[description]{\quot{\left(\incl_{\fil\geq0,\fil\leq0}^{\fil\leq0} \circ i\right)}{f}}
\end{tikzcd}
\]
for every morphism $f:X \to Y$ in $\Cscr$. Because the vertical arrows are left adjoints to the functors $\quot{\omega}{Y} \circ \quot{\incl_{\fil\leq0}}{Y}$ and $\quot{\omega}{X}\circ\quot{\incl_{\fil\leq0}}{X}$, respectively, we then can also deduce the existence of invertible $2$-cells
\[
\begin{tikzcd}
	F(Y)^{\fil\leq 0} \ar[d, swap]{}{\quot{\omega}{Y} \circ \quot{\incl_{\fil\leq0}}{Y}} \ar[rr, ""{name = U}]{}{F^{\fil\leq 0}(f)} & & F(X)^{\fil\leq 0} \ar[d]{}{\quot{\omega}{X}\circ\quot{\incl_{\fil\leq0}}{X}} \\
	T(Y) \ar[rr, swap, ""{name = D}]{}{T(f)}  & & T(X) \ar[from = U, to = D, Rightarrow, shorten <= 4pt, shorten >= 4pt]{}[description]{\quot{\omega^{\fil\leq0}}{f}}
\end{tikzcd}
\]
Extending this to the existence of the natural isomorphisms $\quot{\omega}{f}$ then allow us to deduce the proposition below.
\begin{proposition}\label{Prop: Section Pseudocone Realization: Pseudocone Version of Beil omega}
Let $F:\Cscr^{\op} \to \fCat$ be a filtered triangulated pseudofunctor over a triangulated pseudofunctor $T:\Cscr^{\op} \to \fCat$. Then there is a pseudonatural transformation
\[
\begin{tikzcd}
	\Cscr^{\op} \ar[rr, bend left = 20, ""{name = U}]{}{F} \ar[rr, bend right = 20, swap, ""{name = D}]{}{T} & & \fCat \ar[from = U, to = D, Rightarrow, shorten <= 4pt, shorten >= 4pt]{}{\ul{\omega}}
\end{tikzcd}
\]
such that:
\begin{enumerate}
	\item There is an adjunction
	\[
\begin{tikzcd}
	\Cscr^{\op} \ar[rrrrrr, bend left = 40, ""{name = Up}]{}{F} \ar[rrrrrr, bend right = 40, swap, ""{name = Down}]{}{F} & & & & & & \fCat \ar[from = Down, to = Up, Rightarrow, shorten <= 4pt, shorten >= 4pt, bend right = 40, swap, ""{name = Left}]{}{\ul{\omega} \circ \ul{\incl}_{\fil\leq0}} \ar[from = Up, to = Down, Rightarrow, shorten <= 4pt, shorten >= 4pt, bend right = 40, swap, ""{name = Right}]{}{\ul{\incl}_{\fil\geq0,\fil\geq0}^{\fil\leq0} \circ \ul{i}} \ar[from = Right, to = Left, symbol = \dashv]
\end{tikzcd}
	\]
	in $\Bicat(\Cscr^{\op},\fCat)$.
\item There is an adjunction
	\[
	\begin{tikzcd}
		\Cscr^{\op} \ar[rrrrrr, bend left = 40, ""{name = Up}]{}{F} \ar[rrrrrr, bend right = 40, swap, ""{name = Down}]{}{F} & & & & & & \fCat \ar[from = Down, to = Up, Rightarrow, shorten <= 4pt, shorten >= 4pt, bend right = 40, swap, ""{name = Left}]{}{\ul{\incl}_{\fil\geq0,\fil\geq0}^{\fil\geq0} \circ \ul{i}} \ar[from = Up, to = Down, Rightarrow, shorten <= 4pt, shorten >= 4pt, bend right = 40, swap, ""{name = Right}]{}{\ul{\omega} \circ \ul{\incl}_{\fil\geq0}} \ar[from = Right, to = Left, symbol = \dashv]
	\end{tikzcd}
	\]
	in $\Bicat(\Cscr^{\op},\fCat)$.
\item The modification
\[
\begin{tikzcd}
	\Cscr^{\op} \ar[rrr, bend left = 40, ""{name = Up}]{}{F} \ar[rrr, bend right = 40, swap, ""{name = Down}]{}{T} & & & \fCat \ar[from = Up, to = Down, Rightarrow, shorten <= 4pt, shorten >= 4pt, bend left = 40, ""{name = Left}]{}{\ul{\omega} \circ \ul{s}} \ar[from = Up, to = Down, Rightarrow, shorten <= 4pt, shorten >= 4pt, bend right = 40, swap, ""{name = Right}]{}{\ul{\omega}} \ar[from = Right, to = Left, symbol = \underset{\ul{\omega} \ast \ul{\alpha}}{\Rrightarrow}]
\end{tikzcd}
\]
is an isomorphism in $\Bicat(\Cscr^{\op},\fCat)(F,F)$.
\item For every object $A$ of $\PC(F)^{\fil \leq 0}$ and $B$ of $\PC(F)^{\fil\geq 0}$, the map
\[
\PC(F)(A,B) \to \PC(T)(\omega(A),\omega(B))
\]
given object-locally by $\varphi \mapsto \quot{\omega}{X}(\varphi)$ is an isomorphism.
\end{enumerate}
\end{proposition}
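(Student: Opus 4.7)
The plan is to assemble the object-local data of Proposition \ref{Prop: Section Pseudocone Realization: Omega functors in Beilinson} into a pseudonatural transformation $\ul{\omega}$, and then to promote the object-local adjunctions and isomorphisms to the $2$-categorical setting of $\Bicat(\Cscr^{\op},\fCat)$ using the mate correspondence together with \cite[Theorem 4.1.17]{Monograph}.

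First I would construct $\ul{\omega}$. For each $X \in \Cscr_0$ take $\quot{\omega}{X}:F(X) \to T(X)$ as in Proposition \ref{Prop: Section Pseudocone Realization: Omega functors in Beilinson}. For each morphism $f:X \to Y$ in $\Cscr$, the pseudonaturality of $\ul{i}$ and of $\ul{\incl}_{\fil\geq 0,\fil\leq 0}^{\fil\leq 0}$ supplies the invertible $2$-cell $\quot{(\incl_{\fil\geq0,\fil\leq0}^{\fil\leq0} \circ i)}{f}$ exhibited just before the proposition; passing to mates under the adjunction of Proposition \ref{Prop: Section Pseudocone Realization: Omega functors in Beilinson}(1) yields an invertible $2$-cell $\quot{\omega^{\fil\leq0}}{f}$, and the uniqueness of $\omega$ (together with the symmetric construction on the $\fil\geq0$ side) extends this canonically to an invertible $2$-cell $\quot{\omega}{f}$. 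To show $\ul{\omega}=(\quot{\omega}{X},\quot{\omega}{f})$ satisfies the pseudonaturality coherence, I would compare pasting diagrams for a composable pair $X \xrightarrow{f} Y \xrightarrow{g} Z$: by the mate correspondence, the required identity reduces to the coherence identities already satisfied by $\ul{i}$ and $\ul{\incl}_{\fil\leq0}$ (which are pseudonatural transformations), plus the uniqueness clause of Proposition \ref{Prop: Section Pseudocone Realization: Omega functors in Beilinson}.

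For parts (1) and (2), the object-local units and counits of Proposition \ref{Prop: Section Pseudocone Realization: Omega functors in Beilinson} exist fibrewise; that these collections form modifications follows from the naturality, in $f$, of the mate construction used to define $\quot{\omega}{f}$. Invoking \cite[Theorem 4.1.17]{Monograph} then promotes these fibrewise adjunctions to bona fide adjunctions in $\Bicat(\Cscr^{\op},\fCat)$. For part (3), each component $\quot{(\omega \ast \alpha)}{X} = \quot{\omega}{X} \ast \quot{\alpha}{X}$ is invertible after application to $\quot{i}{X}$ by Proposition \ref{Prop: Section Pseudocone Realization: Omega functors in Beilinson}(3); using that $\quot{i}{X}$ is an equivalence (so $\quot{\omega}{X}$ is already determined on the relevant essential image) and the standard fact that a modification between pseudonatural transformations is invertible iff each component is, one concludes that $\ul{\omega} \ast \ul{\alpha}$ is invertible in $\Bicat(\Cscr^{\op},\fCat)(F,F)$.

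For part (4), I would argue object-locally. Given $A \in \PC(F)^{\fil\leq0}$ and $B \in \PC(F)^{\fil\geq0}$ and a morphism $\psi:\omega(A) \to \omega(B)$ in $\PC(T)$, Proposition \ref{Prop: Section Pseudocone Realization: Omega functors in Beilinson}(4) gives, for each $X$, a unique $\quot{\varphi}{X}:\quot{A}{X} \to \quot{B}{X}$ with $\quot{\omega}{X}(\quot{\varphi}{X}) = \quot{\psi}{X}$. The family $\{\quot{\varphi}{X}\}$ satisfies the pseudocone morphism cocycle because, upon applying $\quot{\omega}{X}$, the two sides of the cocycle become the same morphism (namely the cocycle for $\psi$) after using $\quot{\omega}{f}$; uniqueness then forces the cocycle to hold before applying $\omega$. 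Conversely, applying $\omega$ object-locally to any pseudocone morphism $A \to B$ produces a valid pseudocone morphism $\omega(A) \to \omega(B)$ by the naturality witnesses $\quot{\omega}{f}$, and the two constructions are mutually inverse by the object-local bijection.

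The main obstacle is the pseudofunctor coherence check for $\ul{\omega}$: tracking the mate construction through triple composites $\phi_{f,g}$ and verifying that the resulting pasting equals $\quot{\omega}{g\circ f}$ requires careful pasting-diagram bookkeeping. All other items are essentially formal consequences of Proposition \ref{Prop: Section Pseudocone Realization: Omega functors in Beilinson} once $\ul{\omega}$ is in hand, but the uniqueness-driven extension from $\quot{\omega^{\fil\leq0}}{f}$ to a global $\quot{\omega}{f}$ compatible with the composition comparisons of $F$ and $T$ is where the real work sits.
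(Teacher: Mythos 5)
Your approach matches the paper's: construct $\ul{\omega}$ by defining $\quot{\omega}{f}$ as the mate of $\quot{(\incl_{\fil\geq0,\fil\leq0}^{\fil\leq0}\circ i)}{f}$ under the fibrewise adjunction, check coherence via uniqueness of the $\omega$'s, and then promote the fibrewise adjunctions to $\Bicat(\Cscr^{\op},\fCat)$ via \cite[Theorem 4.1.17]{Monograph}. Parts (1), (2), and (4) are handled essentially as the paper does, and your object-local argument for (4) --- showing the unique fibrewise lifts $\quot{\varphi}{X}$ satisfy the pseudocone cocycle by applying $\quot{\omega}{X}$ to both sides and invoking injectivity, which is legitimate because $F(f)(\quot{A}{Y})$ stays in $F(X)^{\fil\leq 0}$ since $F(f)$ is an $f$-functor --- fills in detail the paper leaves implicit.

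One point to fix in your treatment of (3). You argue that each component $\quot{\omega}{X}\ast\quot{\alpha}{X}$ is invertible ``after application to $\quot{i}{X}$'' and try to leverage that $\quot{i}{X}$ is an equivalence. But $\quot{i}{X}$ is an equivalence onto the subcategory $F(X)^{\fil\geq0}\cap F(X)^{\fil\leq0}$, not onto all of $F(X)$, and the modification $\ul{\omega}\ast\ul{\alpha}$ has components at every object of $F(X)$. Your argument therefore only establishes invertibility on a proper subcategory. What you actually need is that $\quot{\omega}{X}(\quot{\alpha}{X}_A)$ is invertible for every $A\in F(X)_0$. This is in fact what Beilinson proves in \cite[Proposition A.3.iii]{Beilinson} (the paper's restatement of that proposition, restricted to objects $iX$, understates what is true). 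Rather than trying to extend from the essential image of $i$ by a uniqueness argument that does not obviously cover all of $F(X)$, you should simply invoke the full strength of Beilinson's result, as the paper's proof does when it asserts that $(\omega\ast\alpha)_X$ is an isomorphism outright.
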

\begin{proof}
The existence of $\ul{\omega}$ follows from the discussion prior ot the statement of the proposition. From here both (1) and (2) follow from the object-local adjunctions asserted by by Proposition \ref{Prop: Section Pseudocone Realization: Omega functors in Beilinson} and an appeal to \cite[{Theorem 4.1.17}]{Monograph}. Item (3) holds because each natural map $(\omega \ast \alpha)_X = \quot{\omega}{X}(\quot{\alpha}{X}_{-})$ is an isomorphism. Finally, (4) follows from the fact that each corresponding object-local map is an isomorphism.
\end{proof}

We now extend these constructions to the case when the filtered triangulated category $\PC(F)$ and the triangulated category $\PC(T)$ come equipped with $t$-structures which are compatible with the relevant pseudofunctorial constructions. This requires us to both recall what it means to have compatible $t$-structures on filtered triangulated categories as well as extend this to the pseudofunctorial and pseudoconical case of this paper.
	
\begin{definition}[{\cite[{Definition A.4}]{Beilinson}}]\label{Defn: Section Pseudocone Realization: Beilinson A4}
Let $\Ascr$ be a filtered triangulated category over a triangulated category $\Tscr$ and assume that both $\Ascr$ and $\Tscr$ come equipped with $t$-structures. We say that the $t$-structures and filtrations are compatible if the functor
\[
\incl \circ i:\Tscr \to \Ascr
\]
is $t$-exact and if there is an equivalence
\[
s(\Ascr^{t \leq 0}) \simeq \Ascr^{t \leq -1}
\]
which commutes with the corresponding inclusion functors into $\Ascr$ in the sense that there is an invertible $2$-cell:
\[
\begin{tikzcd}
s(\Ascr^{t \leq 0}) \ar[rr, ""{name = U}]{}{\incl} \ar[dr, swap]{}{\simeq} & & \Ascr \\
 & \Ascr^{t \leq -1} \ar[ur, swap]{}{\incl} \ar[from = U, to = 2-2, Rightarrow, shorten <= 4pt, shorten >= 4pt]{}{\cong}
\end{tikzcd}
\]
\end{definition}
\begin{remark}
In \cite[{Definition A.4}]{Beilinson} it is only required that $\Ascr^{t \leq -1} = s(\Ascr^{t \leq 0})$; however, we have asked a weaker condition because the strict equality is too, well, strict for the higher categorical formalism we use in this paper. It is also worth noting that the invertible $2$-cell above is not an additional ask; it exists by virtue of the given equivalence.
\end{remark}

Note that in the above definition we make the convention that we write $\Ascr^{t \leq n}$ and $\Ascr^{t \geq n}$ for the $t$-structure graded subcategories of $\Ascr$ and $\Ascr^{\fil \leq n}$ and $\Ascr^{\fil\geq n}$ for the filtration induced subcategories of the filtered triangulated category $\Ascr$. For the pseudofunctorial convention, we need to replace filtered triangulated categories equipped with a $t$-structure with truncated pseudofunctors (cf.\@ Definition \ref{Defn: Recollection: truncated pseudofunctor}) which are also filtered triangulated categories (cf.\@ Definition \ref{Defn: Section Pseudocone Realizations: fpseudofunctor}).

Let us discuss some of this structure which will be interacting. Assume that we have a filtered triangulated pseudofunctor $F:\Cscr^{\op} \to \fCat$ over a triangulated pseudofunctor $T:\Cscr^{\op} \to \fCat$ for which both $F$ and $T$ are also truncated pseuodofunctors. Now if we want to extend the first condition, namely asking for the functor $\Tscr \to \Ascr$ to be $t$-exact, we need not do any non-na{\"i}ve work. Because both $F$ and $T$ are assumed to be truncated, if we assume that the functors $\quot{\incl}{X} \circ \quot{i}{X}:T(X) \to F(X)$ are $t$-exact for every object $X \in \Cscr_0$, then for every morphism $f:X \to Y$, each functor in the diagram
\[
\begin{tikzcd}
T(Y) \ar[rr, ""{name = U}]{}{\quot{\incl}{Y} \circ \quot{i}{Y}} \ar[d, swap]{}{T(f)} & & F(Y) \ar[d]{}{F(f)} \\
T(X) \ar[rr, swap, ""{name = D}]{}{\quot{\incl}{X} \circ \quot{i}{X}} & & F(X)
\end{tikzcd}
\]
is $t$-exact. Consequently this gives us that the functor $\incl \circ i:\PC(T) \to \PC(F)$ is $t$-exact and so provides us with the correct notion, at least as far as this paper is concerned, of being inclusion compatible. The more difficult aspect lies in understanding and unwinding the equivalence condition. 

We begin our extension of the equivalence condition by first seeing if we can define a subpseudofunctor $S$ of $F$ determined by object-locally taking $S(X)$ to be the image of $F(X)^{t \leq 0}$ under the filtration shift functor $\quot{s}{X}$. On objects the definition of $S$ is essentially forced: we define, for every object $X$ in $\Cscr$,
\[
S(X) := \quot{s}{X}\left(F(X)^{t \leq 0}\right).
\]
To define the fibre functors $S(f):S(Y) \to S(X)$ we need to induce some map $\quot{s}{Y}(F(Y)^{t \leq 0}) \to \quot{s}{X}(F(X)^{t \leq 0})$. Here we first use the fact that since $F$ is a filtered triangulated functor, for every morphism $f:X \to Y$ we have that $\quot{s}{X} \circ F(f) = F(f) \circ \quot{s}{Y}$ so for every object $A$ of $F(Y)^{t \leq 0}$,
\[
F(f)\left(\quot{s}{Y}(A)\right) = \left(F(f) \quot{s}{Y}\right)(A) = \left(\quot{s}{X} \circ F(f)\right)(A) = \quot{s}{X}\left(F(f)\left(A\right)\right);
\]
as such if we know that $F(f):F(Y) \to F(X)$ restricts to a functor
\[ 
F(f)^{t \leq 0}:F(Y)^{t \leq 0} \to F(X)^{t \leq 0}
\]
we can define $S(f)$. However, because $F$ is also a truncated pseudofunctor, $F(f)$ \emph{does} in fact restrict so for every object $A$ of $F(Y)^{t \leq 0}$, $\quot{s}{X}(F(f)(A))$ is an object of $S(X)$. Thus we define the functor components by
\[
S(f) := F(f)|_{S(Y)}
\]
Composition then follows the rule, if $X \xrightarrow{f} Y \xrightarrow{g} Z$ is a pair of composable arrows,
\[
S(f) \circ S(g) = F(f)|_{S(Y)} \circ F(g)|_{S(Z)} = \left(F(f) \circ F(g)\right)|_{S(Z)}
\]
so we define the compositor isomorphisms by simply restricting the $\phi_{f,g}$ as appropriate. With $S$ constructed we note already that by construction that restricting $s$ to the subpseudofunctor $F^{t \leq 0}$ of $F$ gives a pseudonatural isomorphism
\[
\begin{tikzcd}
\Cscr^{\op} \ar[rrrr, bend left = 20, ""{name = U}]{}{F^{t \leq 0}} \ar[rrrr, bend right = 20, swap, ""{name = D}]{}{S} & & & & \fCat \ar[from = U, to = D, Rightarrow, shorten <= 4pt, shorten >= 4pt]{}{\ul{s}|_{F^{t \leq 0}}}
\end{tikzcd}
\]
with strict commutativity witnesses. Note that the commutativity witnesses are strict identities because of the definition of $S(f)$:
\[
\begin{tikzcd}
F(Y)^{t \leq 0} \ar[r]{}{\quot{s}{Y}} \ar[d, swap]{}{F(f)^{t \leq 0}} & S(Y) \ar[d]{}{S(f)} \ar[r, equals] & \quot{s}{Y}(F(Y)) \ar[d]{}{F(f)^{t \leq 0}|_{S(Y)}} \\
F(X) \ar[r, swap]{}{\quot{s}{X}} & S(X) \ar[r, equals] & \quot{s}{X}(F(X)^{t \leq 0})
\end{tikzcd}
\]

Now that we have built $S$, the most straightforward aspect of extending the equivalence condition in Definition \ref{Defn: Section Pseudocone Realization: Beilinson A4} is to simply require that we have a pseudonatural equivalence $\ul{e}:S \to F^{t \leq -1}$. What remains is determining the coherence(s) we need to ask $\ul{e}$ to satisfy. However, this is not too rough to determine. First, because both $S$ and $F^{t \leq -1}$ are subpseudofunctors of $F$ and $\ul{e}$ is a pseudonatural equivalence, is an invertible $2$-cell
\[
\begin{tikzcd}
S \ar[rr, ""{name = U}]{}{} \ar[dr, swap]{}{\ul{e}} & & F \\
 & F^{t \leq -1} \ar[ur] \ar[from = U, to = 2-2, Rightarrow, shorten <= 4pt, shorten >= 4pt]{}{\cong}
\end{tikzcd}
\]
by the existence of $\ul{e}$. Consequently, our primary concern lies in ensuring that the inclusions of $S$ vary compatibly with those of $F^{t \leq -1}$. This can be done by asking that, for every morphism $f:X \to Y$ in $\Cscr$ the pasting diagram
\[
\begin{tikzcd}
S(Y) \ar[r, ""{name = UL}]{}{\quot{e}{Y}} \ar[d, swap]{}{S(f)} & F(Y)^{t \leq -1} \ar[d]{}[description]{F(f)^{t \leq -1}} \ar[rr, ""{name = UR}]{}{\quot{\incl_{t \leq -1}}{Y}} & & F(Y) \ar[d]{}{F(f)} \\
S(X) \ar[r, swap, ""{name = DL}]{}{\quot{e}{X}} &  F(X)^{t \leq -1} \ar[rr, swap, ""{name = DR}]{}{\quot{\incl_{t \leq -1}}{X}} & & F(X) \ar[from = UR, to = DR, shorten <= 4pt, shorten >= 4pt, Rightarrow]{}[description]{\quot{\incl_{t \leq -1}}{f}} \ar[from = UL, to = DL, shorten <= 4pt, shorten >= 4pt, Rightarrow]{}[description]{\quot{e}{f}}
\end{tikzcd}
\]
is equivalent to the $2$-cell:
\[
\begin{tikzcd}
S(Y) \ar[rr, ""{name = U}]{}{\quot{\incl_{S}}{Y}} \ar[d, swap]{}{S(f)} & & F(Y) \ar[d]{}{F(f)} \\
S(X) \ar[rr, swap, ""{name = D}]{}{\quot{\incl_{S}}{X}} & & F(X) \ar[from = U, to = D, Rightarrow, shorten <= 4pt, shorten >= 4pt]{}{\quot{\incl_{S}}{f}}
\end{tikzcd}
\]
This is the condition we seek, as we see below.

\begin{definition}\label{Defn: Section Pseudocone Realization: Truncated compat}
Let $F:\Cscr^{\op} \to \fCat$ be a filtered triangulated pseudofunctor over a triangulated pseudofunctor $T:\Cscr^{\op} \to \fCat$ and assume that $F$ and $T$ are both truncated pseudofunctors as well. We say the truncation structure is compatible with the filtered structure if:
\begin{itemize}
	\item For all objects $X$ in $\Cscr$ the functor
	\[
	\quot{\incl}{X}\circ\quot{i}{X}:T(X) \to F(X)
	\]
	is $t$-exact;
	\item With $S$ the pseudofunctor defined prior to the definition, there is a pseudonatural equivalence $\ul{e}:S \to F^{t \leq -1}$ such that for any morphism $f:X \to Y$ in $\Cscr$, the pasting diagram
	\[
	\begin{tikzcd}
		S(Y) \ar[r, ""{name = UL}]{}{\quot{e}{Y}} \ar[d, swap]{}{S(f)} & F(Y)^{t \leq -1} \ar[d]{}[description]{F(f)^{t \leq -1}} \ar[rr, ""{name = UR}]{}{\quot{\incl_{t \leq -1}}{Y}} & & F(Y) \ar[d]{}{F(f)} \\
		S(X) \ar[r, swap, ""{name = DL}]{}{\quot{e}{X}} &  F(X)^{t \leq -1} \ar[rr, swap, ""{name = DR}]{}{\quot{\incl_{t \leq -1}}{X}} & & F(X) \ar[from = UR, to = DR, shorten <= 4pt, shorten >= 4pt, Rightarrow]{}[description]{\quot{\incl_{t \leq -1}}{f}} \ar[from = UL, to = DL, shorten <= 4pt, shorten >= 4pt, Rightarrow]{}[description]{\quot{e}{f}}
	\end{tikzcd}
	\]
	is equivalent to the $2$-cell:
	\[
	\begin{tikzcd}
		S(Y) \ar[rr, ""{name = U}]{}{\quot{\incl_{S}}{Y}} \ar[d, swap]{}{S(f)} & & F(Y) \ar[d]{}{F(f)} \\
		S(X) \ar[rr, swap, ""{name = D}]{}{\quot{\incl_{S}}{X}} & & F(X) \ar[from = U, to = D, Rightarrow, shorten <= 4pt, shorten >= 4pt]{}{\quot{\incl_{S}}{f}}
	\end{tikzcd}
	\]
\end{itemize}
\end{definition}
\begin{lemma}\label{Lemma: Section Pseudocone Realizations: Sanity Chec for Compat}
Let $F:\Cscr^{\op} \to \fCat$ be a filtered triangulated pseudofunctor over a triangulated pseudofunctor $T:\Cscr^{\op} \to \fCat$ and assume that $F$ and $T$ are truncated pseudofunctors as well. If the truncation structures on $F$ and $T$ are compatible with the filtered structure then the $t$-structures on $\PC(F)$ and $\PC(T)$ are compatible in the sense of Definition \ref{Defn: Section Pseudocone Realization: Beilinson A4}.
\end{lemma}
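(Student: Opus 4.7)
The plan is to verify the two conditions of Definition \ref{Defn: Section Pseudocone Realization: Beilinson A4} for $\PC(F)$ over $\PC(T)$ by exploiting that both the filtration (by Remark \ref{Remark: Section Pseudocone Realizations: The leq and geq pseudocones}) and the $t$-structure on $\PC(F)$ are induced object-locally from the fibre categories, together with the fact that $\PC$ is a (strict) $2$-functor by \cite[Lemma 4.1.13]{Monograph}. This means that pseudonatural transformations whose object functors are $t$-exact induce $t$-exact functors on pseudocone categories, and equivalences of subpseudofunctors descend to equivalences of the corresponding pseudocone subcategories.

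For the first condition, I would simply observe that the functor $\incl \circ i: \PC(T) \to \PC(F)$ is exactly $\PC(\ul{\incl} \circ \ul{i})$. Since each object-local component $\quot{\incl}{X} \circ \quot{i}{X}$ is $t$-exact by the first bullet of Definition \ref{Defn: Section Pseudocone Realization: Truncated compat}, and the truncation functors on $\PC(F)$ and $\PC(T)$ act object-locally, the induced functor is $t$-exact.

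For the second condition, I first identify $s(\PC(F)^{t \leq 0})$ with $\PC(S)$: the pseudonatural isomorphism $\ul{s}|_{F^{t \leq 0}}: F^{t \leq 0} \Rightarrow S$ (with strict commutativity witnesses, as noted just before Definition \ref{Defn: Section Pseudocone Realization: Truncated compat}) yields under $\PC$ an isomorphism $s(\PC(F)^{t \leq 0}) = s(\PC(F^{t \leq 0})) \cong \PC(S)$. Composing with $\PC(\ul{e}): \PC(S) \xrightarrow{\simeq} \PC(F^{t \leq -1}) = \PC(F)^{t \leq -1}$ produces the required equivalence. The invertible $2$-cell witnessing commutativity with inclusions into $\PC(F)$ is obtained by applying $\PC$ to the modification guaranteed by the second bullet of Definition \ref{Defn: Section Pseudocone Realization: Truncated compat}: that pasting-diagram equality is precisely the modification axiom which says $\ul{\incl}_{t \leq -1} \circ \ul{e}$ is isomorphic to $\ul{\incl}_S$, and $\PC$ of this modification gives the $2$-cell of Definition \ref{Defn: Section Pseudocone Realization: Beilinson A4}.

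The main obstacle will not be any deep ingredient but rather bookkeeping: keeping track of the interplay between the strict object-level equality $S(X) = \quot{s}{X}(F(X)^{t \leq 0})$, which makes $s|_{F^{t\leq 0}}$ land strictly in $S$ and identifies $s(\PC(F)^{t \leq 0})$ with $\PC(S)$, and the non-strict equivalence $\quot{e}{X}: S(X) \simeq F(X)^{t \leq -1}$ which must be spliced in coherently via the compatibility modification. Once this identification is carefully set up, both conditions of Definition \ref{Defn: Section Pseudocone Realization: Beilinson A4} follow formally from the $2$-functoriality of $\PC$ and the local-to-global principles for the induced triangulation, filtration, and $t$-structure.
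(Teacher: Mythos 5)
Your proposal is correct and follows essentially the same route as the paper: both reduce each condition of Definition \ref{Defn: Section Pseudocone Realization: Beilinson A4} to its object-local analogue in the fibre categories and then apply the $2$-functor $\PC$. Your write-up is actually a bit more careful than the paper's terse proof, in that it explicitly records the identification $s(\PC(F)^{t\leq 0}) \cong \PC(S)$ via the strict pseudonatural isomorphism $\ul{s}|_{F^{t\leq 0}}$ and observes that the pasting-diagram compatibility in Definition \ref{Defn: Section Pseudocone Realization: Truncated compat} is exactly the modification axiom that $\PC$ converts into the required invertible $2$-cell, steps which the paper compresses into ``is then immediate.''
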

\begin{proof}
The first property we must prove is trivial to show, as that proving the functor $\incl \circ i:\PC(T) \to F(X)$ is $t$-exact is equivalent to checking that for every $X \in \Cscr_0$ the functor $\quot{\incl}{X} \circ \quot{i}{X}$ is $t$-exact.

In the second case note that because $\ul{e}:S \to F^{t\leq 1}$ is a pseudonatural equivalence, $e:\PC(S) \to \PC(F)^{t \leq -1}$ is an equivalence. That it commutes with the relevant inclusion functors is then immediate.
\end{proof}

Assume that we have a filtered triangulated pseudofunctor $F:\Cscr^{\op} \to \fCat$ over a triangulated pseudofunctor $T:\Cscr^{\op} \to \fCat$ and that $T$ is truncated. Since each category $T(X)$, for every object $X \in \Cscr_0$, is equipped with a $t$-structure by applying \cite[{Proposition A.5.i}]{Beilinson} we get that there is a unique $t$-structure on $F(X)$ for which the $t$-structures on $F(X)$ and $T(X)$ are compatible with the filtered structure. This $t$-structure is given by declaring that the objects of the $t \geq 0$ subcategory take the form
\[
\left(F(X)^{t \geq 0}\right)_0 = \left\lbrace A \in F(X)_0 \; : \; \quot{\ul{\gr}}{X}_{f}^{n}(A) \in \left(T(X)^{t \geq n}\right)_0 \right\rbrace
\]
and similarly the objects of the $t \leq 0$ subcategory take the form
\[
\left(F(X)^{t \geq 0}\right)_0 = \left\lbrace A \in F(X)_0 \; : \; \quot{\ul{\gr}}{Y}_{f}^{n}(A) \in \left(T(X)^{t \leq n}\right)_0 \right\rbrace.
\]
Our goal now is to show that these definitions make $F$ into a truncated pseudofunctor, as this will allow us to not only control the unique $t$-structure on $\PC(F)$ induced by \cite[{Proposition A.5.i}]{Beilinson}, but also help us translate the $t$-structure on $\PC(F)$ through the filtration and be important for the realization functor we desire to build. We will do this in a series of compartmentalized lemmas.
\begin{lemma}\label{Lemma: Pseudocone Realizations: The truncation functors and witness isos}
With $F$ and $T$ as above, for any morphism $f:X \to Y$ in $\Cscr$ there are functors $F(f)^{t\leq 0}:F(Y)^{t \leq 0} \to F(X)^{t \leq 0}$ and $F(f)^{t \geq 0}:F(Y)^{t \geq 0} \to F(X)^{t\geq0}$ and natural isomorphisms which fit into invertible $2$-cells
\[
\begin{tikzcd}
F(Y) \ar[rr, ""{name = U}]{}{F(f)} \ar[d, swap]{}{\quot{\tau_{\geq 0}}{Y}} & & F(X) \ar[d]{}{\quot{\tau_{\geq 0}}{X}} \\
F(Y)^{t \geq 0} \ar[rr, swap, ""{name = D}]{}{F(f)^{t \geq 0}} & & F(X)^{t \geq 0} \ar[from = U, to = D, Rightarrow, shorten <= 4pt, shorten >= 4pt]{}{\quot{\theta^{t \geq 0}}{f}}
\end{tikzcd}
\]
and:
\[
\begin{tikzcd}
	F(Y) \ar[rr, ""{name = U}]{}{F(f)} \ar[d, swap]{}{\quot{\tau_{\leq 0}}{Y}} & & F(X) \ar[d]{}{\quot{\tau_{\leq 0}}{X}} \\
	F(Y)^{t \leq 0} \ar[rr, swap, ""{name = D}]{}{F(f)^{t \leq 0}} & & F(X)^{t \leq 0} \ar[from = U, to = D, Rightarrow, shorten <= 4pt, shorten >= 4pt]{}{\quot{\theta^{t \leq 0}}{f}}
\end{tikzcd}
\]
\end{lemma}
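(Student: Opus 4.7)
The plan is to first establish that $F(f)$ preserves the subcategories $F(Y)^{t\geq 0}$ and $F(Y)^{t\leq 0}$, then define the restricted functors, and finally invoke the universal property of truncation to produce the natural isomorphisms. The crucial input is the pseudonatural transformation $\ul{\gr}_f^n : F \Rightarrow T$ established in Lemma \ref{Lemma: Section Pseudocone Realization: Graded PC functor}, which supplies an invertible $2$-cell
\[
\begin{tikzcd}
F(Y) \ar[r, ""{name = U}]{}{F(f)} \ar[d, swap]{}{\quot{\gr_f^n}{Y}} & F(X) \ar[d]{}{\quot{\gr_f^n}{X}} \\
T(Y) \ar[r, swap, ""{name = D}]{}{T(f)} & T(X) \ar[from = U, to = D, Rightarrow, shorten <= 4pt, shorten >= 4pt]{}{\quot{\gr_f^n}{f}}
\end{tikzcd}
\]
for every $n \in \Z$ and every $f: X \to Y$ in $\Cscr$.

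First I would show that if $A$ lies in $F(Y)^{t \geq 0}$, meaning $\quot{\gr_f^n}{Y}(A) \in T(Y)^{t \geq n}$ for all $n$, then $F(f)(A) \in F(X)^{t \geq 0}$. Using the $2$-cell above we have an isomorphism $\quot{\gr_f^n}{X}(F(f)(A)) \cong T(f)(\quot{\gr_f^n}{Y}(A))$; since $T$ is truncated the functor $T(f)$ is $t$-exact (Definition \ref{Defn: Recollection: truncated pseudofunctor}), so $T(f)(\quot{\gr_f^n}{Y}(A))$ lies in $T(X)^{t\geq n}$, and since the latter is a strictly full subcategory closed under isomorphism we conclude $\quot{\gr_f^n}{X}(F(f)(A)) \in T(X)^{t\geq n}$. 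The argument for $F(Y)^{t \leq 0}$ is entirely symmetric. Consequently $F(f)$ restricts to functors $F(f)^{t\geq 0}$ and $F(f)^{t \leq 0}$, which I take as the sought functors.

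Next I would construct $\quot{\theta^{t \geq 0}}{f}$ by invoking the universal property of truncation. Given $A \in F(Y)_0$ there is a distinguished truncation triangle $\quot{\tau_{\leq -1}}{Y}(A) \to A \to \quot{\tau_{\geq 0}}{Y}(A) \to +$. Because $F(f)$ is triangulated (as $F$ is a triangulated pseudofunctor), applying it produces a distinguished triangle in $F(X)$; by the previous paragraph its first term lies in $F(X)^{t \leq -1}$ and its third term in $F(X)^{t \geq 0}$. The uniqueness of truncation triangles then yields a unique isomorphism $\quot{\theta_A^{t \geq 0}}{f}: \quot{\tau_{\geq 0}}{X}(F(f)(A)) \xrightarrow{\cong} F(f)^{t \geq 0}(\quot{\tau_{\geq 0}}{Y}(A))$, and uniqueness guarantees naturality in $A$, producing the asserted invertible $2$-cell. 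The construction of $\quot{\theta^{t \leq 0}}{f}$ is the exact dual.

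The main obstacle is the first step: verifying that the $t$-position of $\quot{\gr_f^n}{Y}(A)$ is transported correctly by $F(f)$. This requires me to carefully chase the pseudonaturality isomorphism $\quot{\gr_f^n}{f}$ and confirm that it respects the decomposition $\ul{\gr}_f^n = \ul{j} \circ \ul{s}^{-n} \circ \ul{\incl}_{\fil \leq n} \circ \ul{\sigma}_{\fil\leq n} \circ \ul{\incl}_{\fil \geq n} \circ \ul{\sigma}_{\fil \geq n}$ from Lemma \ref{Lemma: Section Pseudocone Realization: Graded PC functor}, each of whose constituents is pseudonatural. Once that verification is complete, the remaining construction of the $\theta$'s is a formal consequence of the uniqueness of truncation.
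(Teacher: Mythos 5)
Your proposal is correct, and the first half coincides exactly with the paper's argument: both use the pseudonaturality isomorphism $\quot{\ul{\gr}_f^n}{f}$ supplied by Lemma \ref{Lemma: Section Pseudocone Realization: Graded PC functor}, the $t$-exactness of $T(f)$ (since $T$ is truncated), and the strict fullness of $T(X)^{t \geq n}$ (resp.\@ $T(X)^{t \leq n}$) to conclude that $F(f)$ restricts to $F(f)^{t \geq 0}$ and $F(f)^{t \leq 0}$.

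Where you diverge is in the construction of the witness isomorphisms $\quot{\theta^{t \geq 0}}{f}$ and $\quot{\theta^{t \leq 0}}{f}$. The paper takes a $2$-categorical route: the square of inclusion functors $\quot{\incl_{t \leq 0}}{(-)}$ commutes \emph{strictly}, and one then transports along the adjunctions $\quot{\incl_{t \leq 0}}{Z} \dashv \quot{\tau_{\leq 0}}{Z}$ (and $\quot{\tau_{\geq 0}}{Z} \dashv \quot{\incl_{t \geq 0}}{Z}$) to get a mate square of truncation functors commuting up to a canonical natural isomorphism. You instead argue object-by-object: apply the triangulated functor $F(f)$ to the truncation triangle of $A$, observe that the resulting triangle has outer terms in $F(X)^{t \leq -1}$ and $F(X)^{t \geq 0}$ by the preservation step, and invoke uniqueness of truncation triangles (BBD) to get the isomorphism, with naturality following from the same uniqueness. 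Both routes are standard and yield the same canonical isomorphism; the mate argument is more economical but implicitly relies on the truncation-triangle uniqueness you spell out, while your argument is more self-contained and makes the invertibility of the mate transparent. One small point worth making explicit in your version: when you claim the first term of the triangle lands in $F(X)^{t \leq -1}$, you need $F(f)$ to preserve $F(Y)^{t \leq -1}$, not just $F(Y)^{t \leq 0}$; this follows immediately since $F(f)$ is triangulated (hence commutes with shift) and $F(Y)^{t \leq -1} = F(Y)^{t \leq 0}[1]$, but it should be said. Your closing worry about ``chasing the pseudonaturality isomorphism through the decomposition of $\ul{\gr}_f^n$'' is unfounded --- Lemma \ref{Lemma: Section Pseudocone Realization: Graded PC functor} already packages that chase for you, and the argument you gave uses its output and nothing more.
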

\begin{proof}
The existence of the truncation functors $\quot{\tau_{\leq 0}}{Z}:F(Z) \to F(Z)^{t \leq 0}$ and $\quot{\tau_{\geq 0}}{Z}:F(Z) \to F(Z)^{t\geq 0}$ for every object $Z$ of $\Cscr$ follow from the fact that $F(Z)$ has a $t$-structure (cf.\@ \cite[{Proposition 1.3.3.i}]{BBD}). 

To induce the functors $F(f)^{t \leq 0}$ and $F(f)^{t \geq 0}$ we must show that if $A$ is an object of $F(Y)$ for which $A$ is an object of $F(Y)^{t \leq 0}$ then $F(f)(A)$ is an object of $F(X)^{t \leq 0}$, and similarly for the $t \geq 0$ case. This in turn amounts to showing that if $\quot{\ul{\gr}_f^n}{Y}(A) \in (T(Y)^{t \leq n})_0$ then $F(f)(A)$ has the property that $\quot{\ul{\gr}_f^n}{X}(F(f)A)$ is an object of $T(X)^{t \leq n}$. For this first note that because $\ul{\gr}_{f}^{n}:F \Rightarrow T$ is a pseudonatural transformation we have isomorphisms
\[
\left(\quot{\ul{\gr}_{f}^{n}}{X} \circ F(f)\right)(A) \cong \left(T(f) \circ \quot{\ul{\gr}_{f}^{n}}{Y}\right)(A) = T(f)\left(\left(\quot{\ul{\gr}_{f}^{n}}{Y}\right)(A)\right).
\]
Because $T$ is a truncated pseudofunctor, $T(f)$ is left and right $t$-exact and so necessarily preserves truncation degrees. Thus $\quot{\ul{\gr}_{f}^{n}}{Y}(A)$ is an object of $T(X)^{t \leq n}$. However, because $T(X)^{t \leq n}$ is a strictly full subcategory of $T(X)$, this implies that $\quot{\ul{\gr}_{f}^{n}}{X}\left(F(f)(A)\right)$ is an object of $T(X)^{t \leq n}$ as well and hence that we have our desired functor $F(f)^{t \leq 0}$. Finally, the existence of the isomorphism $\quot{\theta^{t \leq 0}}{f}$ follows from the fact that each diagram
\[
\begin{tikzcd}
F(Y)^{t \leq 0} \ar[r]{}{F(f)^{t \leq 0}} \ar[d, swap]{}{\quot{\incl_{t \leq 0}}{Y}} & F(X)^{t \leq 0} \ar[d]{}{\quot{\incl_{t \leq 0}}{X}} \\
F(Y) \ar[r, swap]{}{F(f)} & F(X)
\end{tikzcd}
\]
commutes strictly and because we have adjunctions $\quot{\incl_{t \leq 0}}{Y} \dashv \quot{\tau_{\leq 0}}{Y}$ and $\quot{\incl_{t \leq 0}}{X} \dashv \quot{\tau_{\leq 0}}{X}$. The existence of the functors $F(f)^{t \geq 0}$ and the commutativity witnesses $\quot{\theta^{t \geq 0}}{f}$ follows similarly save that we use the adjunctions $\quot{\tau_{\geq 0}}{Y} \dashv \quot{\incl_{t \geq 0}}{Y}$ and $\quot{\tau_{\geq 0}}{X} \dashv \quot{\incl_{t \geq 0}}{X}$ instead.
\end{proof}
\begin{lemma}\label{Lemma: Section Pseudocone Realizations: Witness isos are pseudonat}
The natural isomorphisms $\quot{\theta^{t \geq 0}}{f}$ and $\quot{\theta^{t \leq 0}}{f}$ vary pseudonaturally in $\Cscr^{\op}$.
\end{lemma}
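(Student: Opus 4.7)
The plan is to verify the pasting coherence of Definition~\ref{Defn: Recollection: truncated pseudofunctor} for the families $\{\quot{\theta^{t \leq 0}}{f}\}$ and $\{\quot{\theta^{t \geq 0}}{f}\}$. I will treat the $t \leq 0$ case, the $t \geq 0$ case being formally dual via the opposite adjunctions. First I would verify that the assignments $X \mapsto F(X)^{t \leq 0}$ and $f \mapsto F(f)^{t \leq 0}$ assemble into a sub-pseudofunctor $F^{t \leq 0}:\Cscr^{\op} \to \fCat$ of $F$ whose compositor $\phi^{F^{t \leq 0}}_{f,g}$ at an object of $F(Z)^{t \leq 0}$ is simply the restriction of $\phi^F_{f,g}$; this is well-defined because Lemma~\ref{Lemma: Pseudocone Realizations: The truncation functors and witness isos} already shows that $F(f)$ preserves the $t \leq 0$ subcategory. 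The inclusions $\quot{\incl_{t \leq 0}}{X}$ then assemble into a pseudonatural transformation $\ul{\incl}_{t \leq 0}:F^{t \leq 0} \Rightarrow F$ whose commutativity witnesses $\quot{\incl_{t \leq 0}}{X} \circ F(f)^{t \leq 0} = F(f) \circ \quot{\incl_{t \leq 0}}{Y}$ are strict identities.

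Next I would observe that each $\quot{\theta^{t \leq 0}}{f}$ is, by its construction in the previous lemma, the mate of the identity $2$-cell on the corresponding strict inclusion square under the family of adjunctions $\quot{\incl_{t \leq 0}}{X} \dashv \quot{\tau_{\leq 0}}{X}$. Since the mate correspondence along a family of adjunctions is compatible with horizontal pasting, the vertical composite of $\quot{\theta^{t \leq 0}}{f}$ with $\quot{\theta^{t \leq 0}}{g}$, assembled together with $\left(\phi^F_{f,g}\right)^{-1}$ and $\phi^{F^{t \leq 0}}_{f,g}$ as in Definition~\ref{Defn: Recollection: truncated pseudofunctor}, is precisely the mate of the corresponding pasting built from the strict inclusion squares for $f$ and $g$ together with $\phi^F_{f,g}$ and its restriction; because $\phi^{F^{t \leq 0}}_{f,g}$ is by definition that restriction, this latter pasting collapses to the strict inclusion square for $g \circ f$, whose mate is $\quot{\theta^{t \leq 0}}{g \circ f}$. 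This is exactly the coherence demanded by Definition~\ref{Defn: Recollection: truncated pseudofunctor}.

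The main obstacle will be bookkeeping the many adjunction units, counits, and compositor isomorphisms cleanly in the pasting diagrams. I would handle this by whiskering the proposed equation on the appropriate side with $\quot{\incl_{t \leq 0}}{X}$ and appealing to the full faithfulness of this inclusion to reduce the verification to an equality of $2$-cells entirely inside $F(X)$; that equality then follows from the strict commutativity of the inclusion squares together with the naturality of $\phi^F_{f,g}$. The $t \geq 0$ case proceeds identically, using the adjunctions $\quot{\tau_{\geq 0}}{X} \dashv \quot{\incl_{t \geq 0}}{X}$ invoked in the proof of Lemma~\ref{Lemma: Pseudocone Realizations: The truncation functors and witness isos}, with the roles of the units and counits interchanged relative to the $t \leq 0$ case.
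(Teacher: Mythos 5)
Your proposal is correct and follows essentially the same route as the paper: both recognize that the witnesses $\quot{\theta^{t\leq 0}}{f}$ (resp.\ $\quot{\theta^{t\geq 0}}{f}$) are conjugates under the truncation/inclusion adjunctions of the strict commutativity witnesses for $\ul{\incl}_{t\leq 0}$, and both deduce the required pasting coherence from the compatibility of the mate correspondence with composition together with the strictness of the inclusion squares. Your formulation is, if anything, slightly cleaner than the paper's, which invokes a ``contractible groupoid of isomorphisms'' and a possible redefinition of the $\theta$'s where your mate-correspondence argument pins them down uniquely.
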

\begin{proof}
We only prove this for the $t \geq 0$ case, as the $t \leq 0$ case holds similarly. Begin by letting $X \xrightarrow{f} Y \xrightarrow{g} Z$ be a composable pair of morphisms in $\Cscr$ and consider the pasting diagram:
\begin{equation}\label{Eqn: Pasto}
\begin{tikzcd}
F(Z) \ar[rr, ""{name = UL}]{}{F(g)} \ar[dd, swap]{}{\quot{\tau^{\geq 0}}{Z}} & & F(Y) \ar[dd]{}[description]{\quot{\tau^{\geq 0}}{Y}} \ar[rr, ""{name = UR}]{}{F(f)} & & F(X) \ar[dd]{}{\quot{\tau^{\geq 0}}{X}} \\ 
\\
F(Z)^{t \geq 0} \ar[rr, swap, ""{name = DL}]{}{F(g)^{t \geq 0}} \ar[rrrr, bend right = 40, swap, ""{name = DD}]{}{F(g \circ f)^{t \geq 0}} & & F(Y) \ar[rr, swap, ""{name = DR}]{}{F(f)^{t \geq 0}} & & F(X)^{t \geq 0} \ar[from = UL, to = DL, Rightarrow, shorten <= 4pt, shorten >= 4pt]{}{\quot{\theta^{t \geq 0}}{g}} \ar[from = UR, to = DR, Rightarrow, shorten <= 4pt, shorten >= 4pt]{}{\quot{\theta^{t \geq 0}}{f}} \ar[from = 3-3, to = DD, Rightarrow, shorten <= 4pt, shorten >= 4pt]{}{\quot{\phi_{f,g}}{t \geq 0}}
\end{tikzcd}
\end{equation}
Because the isomorphisms $\quot{\theta^{t \geq 0}}{f}$ and $\quot{\theta^{t \geq 0}}{g}$ are constructed from the adjunctions $\quot{\tau_{\geq 0}}{X} \dashv \quot{\incl_{t \geq 0}}{X}$ and $\quot{\tau_{\geq 0}}{Y} \dashv \quot{\incl_{t \geq 0}}{Y}$ together with the pseudonaturality of $\incl_{t \geq 0}$, it suffices to show that the pseudonaturality pasting requoriements come from the same diagrams on the adjoint side, as then the corresponding isomorphisms are members of a contractible groupoid of isomorphisms. For this, however, we simply note that because the inclusion functors vary pseudonaturally (and in fact strictly in this case) we have that the pasting diagrazm
\[
\begin{tikzcd}
F(Z)^{t \geq 0} \ar[rr, ""{name = UL}]{}{F(g)^{t \geq 0}} \ar[dd, swap]{}{\quot{\incl_{t \geq 0}}{Z}} & & F(Y)^{t \geq 0} \ar[rr, ""{name = UR}]{}{F(f)^{t \geq 0}} \ar[dd]{}[description]{\quot{\incl_{t \geq 0}}{Y}} & & F(X)^{t \geq 0} \ar[dd]{}{\quot{\incl_{t \geq 0}}{X}} \\
 \\
F(Z) \ar[rr, swap]{}{F(g)} \ar[rrrr, swap, bend right = 40, ""{name = DD}]{}{F(g \circ f)} & & F(Y) \ar[rr, swap]{}{F(f)} & & F(X) \ar[from = 3-3, to = DD, Rightarrow, shorten <= 4pt, shorten >= 4pt]{}{\quot{\phi_{f,g}}{F}}
\end{tikzcd}
\]
is equal to the pasting diagram:
\[
\begin{tikzcd} 
 & F(Y)^{t \geq 0} \ar[dr]{}{F(f)^{t \geq 0}}	\\
F(Z)^{t \geq 0} \ar[d, swap]{}{\quot{\incl_{t \geq 0}}{Z}} \ar[rr, ""{name = U}]{}[description]{F(g \circ f)^{t \geq 0}} \ar[ur]{}{F(g)^{t \geq 0}} & {} & F(X)^{t \geq 0} \ar[d]{}{\quot{\incl_{t \geq 0}}{X}} \\
F(Z) \ar[rr, swap]{}{F(g \circ f)} & & F(X) \ar[from = 1-2, to = U, Rightarrow, shorten >= 4pt, shorten <= 4pt]{}{\quot{\phi_{f,g}}{t \geq 0}}
\end{tikzcd}
\]
Thus, after potentially redefining the $\theta$ along a choice of contraction of the groupoid of isomorphisms, we find that the pasting diagram expressed in Diagram \ref{Eqn: Pasto} conicides with the pasting diagram:
\[
\begin{tikzcd}
	& F(Y) \ar[dr]{}{F(f)} \\
F(Z) \ar[d, swap]{}{\quot{\tau_{\geq 0}}{Z}} \ar[ur]{}{F(g)} \ar[rr, ""{name = U}]{}{F(g \circ f)} & & F(X) \ar[d]{}{\quot{\tau_{\geq 0}}{X}} \\
F(Z)^{t \geq 0} \ar[rr, swap, ""{name = D}]{}{F(g \circ f)^{t \geq 0}} & & F(X)^{t \geq 0} \ar[from = 1-2, to = U, Rightarrow, shorten <= 4pt, shorten >= 4pt]{}{\quot{\phi_{f,g}}{F}} \ar[from = U, to = D, Rightarrow, shorten <= 4pt, shorten >= 4pt]{}{\quot{\theta^{t \geq 0}}{g \circ f}}
\end{tikzcd}
\]
\end{proof}
\begin{corollary}\label{Cor: Section Pseudocone Realizations: Truncations are neato}
Assume that we have a filtered triangulated pseudofunctor $F:\Cscr^{\op} \to \fCat$ over a triangulated pseudofunctor $T:\Cscr^{\op} \to \fCat$ and that $T$ is truncated. The assignments $F^{t \leq 0}$ and $F^{t \geq 0}$ induce pseudofunctors with adjoints
\[
\begin{tikzcd}
F^{t \geq 0} \ar[rrr, swap, bend right = 20, ""{name = D}]{}{\ul{\incl}_{t \geq 0}} & & & F \ar[lll, bend right = 20, swap, ""{name = U}]{}{\ul{\tau}_{\geq 0}} \ar[from = U, to = D, symbol = \dashv]
\end{tikzcd}
\]
and
\[
\begin{tikzcd}
F \ar[rrr, bend right = 20, swap, ""{name = D}]{}{\ul{\tau}_{\leq 0}} & & & F^{t \leq 0} \ar[lll, bend right = 20, swap, ""{name = U}]{}{\ul{\incl}_{t \leq 0}} \ar[from = U, to = D, symbol = \dashv]
\end{tikzcd}
\]
in the $2$-category $\Bicat(\Cscr^{\op},\fCat)$. In particular, $F$ is a truncated pseudofunctor with each $t$-structure on $F(X)$ the unique $t$-structure compatible with the $t$-structure on $T(X)$.
\end{corollary}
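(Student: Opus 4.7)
The plan is to assemble the pieces already prepared in Lemmas \ref{Lemma: Pseudocone Realizations: The truncation functors and witness isos} and \ref{Lemma: Section Pseudocone Realizations: Witness isos are pseudonat} into a pseudofunctorial package and then invoke \cite[Theorem 4.1.17]{Monograph} to promote object-local adjunctions into pseudonatural adjunctions in $\Bicat(\Cscr^{\op},\fCat)$. First, I would show $F^{t\leq 0}$ and $F^{t\geq 0}$ are well-defined pseudofunctors by using Lemma \ref{Lemma: Pseudocone Realizations: The truncation functors and witness isos}: the fibre categories $F(X)^{t\leq 0}$ and $F(X)^{t\geq 0}$ exist by \cite[Proposition A.5.i]{Beilinson}, and the restrictions $F(f)^{t\leq 0}$ and $F(f)^{t\geq 0}$ are provided by that lemma. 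The compositor and unitor isomorphisms for $F^{t\leq 0}$ and $F^{t\geq 0}$ are then obtained by restriction of those of $F$, since the inclusions $\quot{\incl_{t\leq 0}}{X}$ and $\quot{\incl_{t\geq 0}}{X}$ are strictly natural subinclusions preserved by the fibre functors. The pseudofunctor axioms for $F^{t\leq 0}$ and $F^{t\geq 0}$ thus follow immediately from those of $F$ by restriction.

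Next I would assemble the object-local truncation functors $\quot{\tau_{\leq 0}}{X}$ and $\quot{\tau_{\geq 0}}{X}$ into pseudonatural transformations $\ul{\tau}_{\leq 0}:F \Rightarrow F^{t\leq 0}$ and $\ul{\tau}_{\geq 0}:F \Rightarrow F^{t\geq 0}$. The commutativity witnesses $\quot{\theta^{t\leq 0}}{f}$ and $\quot{\theta^{t\geq 0}}{f}$ are precisely those provided by Lemma \ref{Lemma: Pseudocone Realizations: The truncation functors and witness isos}, and their pseudonaturality (in the sense of satisfying the pasting coherence for composable morphisms) is exactly the content of Lemma \ref{Lemma: Section Pseudocone Realizations: Witness isos are pseudonat}. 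Similarly, the inclusion pseudonatural transformations $\ul{\incl}_{t \leq 0}: F^{t \leq 0} \Rightarrow F$ and $\ul{\incl}_{t \geq 0}:F^{t \geq 0} \Rightarrow F$ have strict commutativity witnesses, and so obviously form pseudonatural transformations.

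For the adjunctions, each $X \in \Cscr_0$ carries a local adjunction $\quot{\tau_{\geq 0}}{X} \dashv \quot{\incl_{t\geq 0}}{X}$ and $\quot{\incl_{t\leq 0}}{X} \dashv \quot{\tau_{\leq 0}}{X}$ arising from the $t$-structure on $F(X)$ given by \cite[Proposition A.5.i]{Beilinson}. Applying \cite[Theorem 4.1.17]{Monograph} to each family simultaneously promotes these local adjunctions into pseudonatural adjunctions $\ul{\incl}_{t\geq 0} \dashv \ul{\tau}_{\geq 0}$ and $\ul{\tau}_{\leq 0} \dashv \ul{\incl}_{t\leq 0}$ in $\Bicat(\Cscr^{\op},\fCat)$, yielding the two displayed adjunctions.

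Finally, to verify $F$ is a truncated pseudofunctor in the sense of Definition \ref{Defn: Recollection: truncated pseudofunctor}, it remains to check the coherence conditions comparing the compositor isomorphisms of $F$, $F^{t\leq 0}$, and $F^{t \geq 0}$ with the $\theta$ witnesses. The main technical point — which is where I expect the only real obstacle — is verifying that the pasting diagrams displayed in Definition \ref{Defn: Recollection: truncated pseudofunctor} commute; however, this follows because the $\theta^{t\leq 0}$ and $\theta^{t\geq 0}$ were constructed, via the adjunctions, as mates of the strict identities expressing that $\quot{\incl_{t \leq 0}}{X}$ and $\quot{\incl_{t \geq 0}}{X}$ form strict pseudonatural transformations of subpseudofunctors, so the required pasting identities are just the corresponding mates of the identities expressing that $\ul{\incl}_{t \leq 0}$ and $\ul{\incl}_{t \geq 0}$ satisfy the pseudonaturality pasting axiom. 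Uniqueness of the resulting $t$-structure on each $F(X)$ is exactly \cite[Proposition A.5.i]{Beilinson}.
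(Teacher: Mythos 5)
Your proposal tracks the paper's proof almost verbatim: assemble Lemmas~\ref{Lemma: Pseudocone Realizations: The truncation functors and witness isos} and~\ref{Lemma: Section Pseudocone Realizations: Witness isos are pseudonat} to get the pseudofunctors $F^{t\leq 0}$, $F^{t\geq 0}$ and the pseudonatural transformations $\ul{\tau}_{\leq 0}$, $\ul{\tau}_{\geq 0}$, then apply \cite[Theorem 4.1.17]{Monograph} to promote the object-local adjunctions, then observe that the pasting coherences of Definition~\ref{Defn: Recollection: truncated pseudofunctor} are exactly the pseudonaturality of $\ul{\tau}_{\leq 0}$, $\ul{\tau}_{\geq 0}$ (which you unpack via a mate argument, consistent with how Lemma~\ref{Lemma: Section Pseudocone Realizations: Witness isos are pseudonat} is actually proved). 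This is the same route as the paper.

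One slip to correct: you record the object-local adjunctions correctly as $\quot{\tau_{\geq 0}}{X} \dashv \quot{\incl_{t\geq 0}}{X}$ and $\quot{\incl_{t\leq 0}}{X} \dashv \quot{\tau_{\leq 0}}{X}$, but then state the promoted adjunctions as $\ul{\incl}_{t\geq 0} \dashv \ul{\tau}_{\geq 0}$ and $\ul{\tau}_{\leq 0} \dashv \ul{\incl}_{t\leq 0}$, which reverses both. The promotion via \cite[Theorem 4.1.17]{Monograph} preserves handedness (compare the parallel treatment of $\ul{\incl}_{\fil\geq n} \dashv \ul{\sigma}_{\fil\geq n}$ earlier in the section), so the pseudonatural adjunctions are $\ul{\tau}_{\geq 0} \dashv \ul{\incl}_{t\geq 0}$ and $\ul{\incl}_{t\leq 0} \dashv \ul{\tau}_{\leq 0}$, matching both the local facts you cited and the displayed diagrams in the statement.
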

\begin{proof}
Lemma \ref{Lemma: Pseudocone Realizations: The truncation functors and witness isos} shows that $F^{t \leq 0}$ and $F^{t \geq 0}$ induce pseudofunctors and Lemma \ref{Lemma: Section Pseudocone Realizations: Witness isos are pseudonat} shows that the collections $(\quot{\tau_{\geq 0}}{X},\quot{\theta^{t \geq 0}}{f})_{X \in \Cscr_0, f \in \Cscr_1}$ and $(\quot{\tau_{\leq 0}}{X}, \quot{\theta^{t \leq 0}}{f})_{X \in \Cscr_0, f \in \Cscr_1}$ are pseudonatural transformations. From here the adjunctions follow by virtue of \cite[{Theorem 4.1.17}]{Monograph} and the facts that $\quot{\tau_{\geq 0}}{X} \dashv \quot{\incl_{t \geq 0}}{X}$ and $\quot{\incl_{t \leq 0}}{X} \dashv \quot{\tau_{\leq 0}}{X}$ for all objects $X$ in $\Cscr_0$. Finally that $F$ is truncated is immediate from the pseudonaturality $\ul{\tau_{\leq 0}}$ and $\ul{\tau_{\geq 0}}$ while the statement regarding compatibility follows by construction.
\end{proof}

This allows us to deduce the following proposition regarding the $t$-structure on the filtered triangulated category $\PC(F)$ over $\PC(T)$ when $T$ is truncated.
\begin{proposition}\label{Prop: Section Pseudocone Realizations: The nice description of stuffs for truncated and filtered}
Assume that we have a filtered triangulated pseudofunctor $F:\Cscr^{\op} \to \fCat$ over a triangulated pseudofunctor $T:\Cscr^{\op} \to \fCat$ and that $T$ is truncated. Then the unique $t$-structure on $\PC(T)$ compatible with the $t$-structure on $\PC(T)$ satisfies
\begin{align*}
	\left(\PC(F)^{t \geq 0}\right)_0 &= \left\lbrace A \; : \; \gr_{\fil}^{n}(A) \in \left(\PC(T)^{t \geq n}\right)_0\right\rbrace = \left\lbrace A \; : \; \quot{\ul{\gr}_f^n}{X}\left(\quot{A}{X}\right) \in \left(T(X)^{t \geq n}\right)_0 \right\rbrace \\
	&= \left\lbrace A \; : \; \quot{\ul{\gr}_f^n}{X}\left(\quot{A}{X}\right) \in \left(F(X)^{t \geq 0}\right)_0 \right\rbrace
\end{align*}
and
\begin{align*}
	\left(\PC(F)^{t \leq 0}\right)_0 &= \left\lbrace A \; : \; \gr_{\fil}^{n}(A) \in \left(\PC(T)^{t \leq n}\right)_0\right\rbrace = \left\lbrace A \; : \; \quot{\ul{\gr}_f^n}{X}\left(\quot{A}{X}\right) \in \left(T(X)^{t \leq n}\right)_0 \right\rbrace \\
&= \left\lbrace A \; : \; \quot{\ul{\gr}_f^n}{X}\left(\quot{A}{X}\right) \in \left(F(X)^{t \leq 0}\right)_0 \right\rbrace
\end{align*}
\end{proposition}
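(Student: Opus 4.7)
The plan is to establish the three characterizations by chaining the object-local description of $t$-structures on pseudocone categories of truncated pseudofunctors with the local description of the $t$-structure on each $F(X)$ in terms of its graded pieces. By Corollary \ref{Cor: Section Pseudocone Realizations: Truncations are neato}, $F$ is itself a truncated pseudofunctor whose fibre $t$-structures on $F(X)$ are precisely those uniquely determined by compatibility with the $t$-structures on $T(X)$ in the sense of \cite[Proposition A.5.i]{Beilinson}. Combined with Lemma \ref{Lemma: Section Pseudocone Realizations: Sanity Chec for Compat}, this identifies the $t$-structure induced on $\PC(F)$ by the truncated structure of $F$ with the unique $t$-structure on $\PC(F)$ compatible with that of $\PC(T)$ referenced in the statement. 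Consequently, by the object-local description of $t$-structures on pseudocone categories of truncated pseudofunctors recalled earlier from \cite{Monograph}, the subcategory $\PC(F)^{t \geq 0}$ consists exactly of those $A$ with $\quot{A}{X} \in F(X)^{t \geq 0}$ for every $X \in \Cscr_0$, and similarly for $\PC(F)^{t \leq 0}$.

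Next I would unfold the object-local $t$-structure on each $F(X)$. By its defining property (as recalled just before Lemma \ref{Lemma: Pseudocone Realizations: The truncation functors and witness isos} and coming from \cite[Proposition A.5.i]{Beilinson}), $\quot{A}{X} \in F(X)^{t \geq 0}$ exactly when $\quot{\ul{\gr}_f^n}{X}(\quot{A}{X}) \in T(X)^{t \geq n}$ for every $n \in \Z$, and dually in the $t \leq 0$ case. Asking this at every $X$ and swapping the universal quantifiers over $X$ and $n$ yields the middle description in each chain of equalities.

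To identify these with the leftmost description in terms of $\gr_{\fil}^n$, I appeal to Lemma \ref{Lemma: Section Pseudocone Realization: Graded PC functor}, which gives $\gr_{\fil}^n = \PC(\ul{\gr}_f^n)$. In particular the $X$-component of $\gr_{\fil}^n(A)$ is $\quot{\ul{\gr}_f^n}{X}(\quot{A}{X})$. Because $T$ is assumed to be truncated, the $t$-structure on $\PC(T)$ also admits the object-local characterization, so $\gr_{\fil}^n(A) \in \PC(T)^{t \geq n}$ iff $\quot{\ul{\gr}_f^n}{X}(\quot{A}{X}) \in T(X)^{t \geq n}$ for every $X$; letting $n$ vary matches the middle description. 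The $t \leq 0$ case is dual and follows by the same three-step argument with inequalities reversed.

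The only step requiring real care is the recognition in the first paragraph that the $t$-structure produced by the truncated structure on $F$ actually coincides with the unique one compatible with that of $\PC(T)$ in the filtered sense of Definition \ref{Defn: Section Pseudocone Realization: Beilinson A4}. This is handled by combining Lemma \ref{Lemma: Section Pseudocone Realizations: Sanity Chec for Compat} with the uniqueness clause of \cite[Proposition A.5.i]{Beilinson} applied to $\PC(F)$ as a filtered triangulated category over $\PC(T)$. Once this identification is made, the remaining content of the proposition is a pair of quantifier swaps together with the pseudonaturality of $\ul{\gr}_f^n$.
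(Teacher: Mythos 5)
Your proof is correct and relies on the same key ingredients as the paper's: Beilinson's Proposition A.5.i, Corollary \ref{Cor: Section Pseudocone Realizations: Truncations are neato}, and the object-local description of $t$-structures on pseudocone categories of truncated pseudofunctors. The difference is the direction of the argument. The paper obtains the first equality directly by applying A.5.i to $\PC(F)$ as a filtered triangulated category over $\PC(T)$ (legitimate by Proposition \ref{Prop: Section Pseudocone Realizations: When PCF is filtered over PCT}), then unwinds to the object-local $\gr$ description via $T$ truncated, and finally matches that with the defining property of $F(X)^{t\geq 0}$ supplied by Corollary \ref{Cor: Section Pseudocone Realizations: Truncations are neato}; at no point does it need to show that the $t$-structure $\PC$ inherits from the truncated structure of $F$ coincides with Beilinson's unique compatible one. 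You instead begin from that pseudocone $t$-structure and therefore must establish the coincidence, which you do by invoking Lemma \ref{Lemma: Section Pseudocone Realizations: Sanity Chec for Compat} together with the uniqueness in A.5.i. This is reasonable, but it adds a burden the paper's route avoids: Lemma \ref{Lemma: Section Pseudocone Realizations: Sanity Chec for Compat} requires the truncated structure on $F$ produced by Corollary \ref{Cor: Section Pseudocone Realizations: Truncations are neato} to be compatible with the filtered structure in the sense of Definition \ref{Defn: Section Pseudocone Realization: Truncated compat} (in particular the pseudonaturality of the equivalence $\ul{e}:S\Rightarrow F^{t\leq -1}$), and that corollary does not explicitly assert this. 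You correctly flag this as the delicate step; if you keep your route you should say a word about why the compatibility holds, or simply run the argument in the paper's direction so the identification falls out for free.
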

\begin{proof}
In both $t \geq 0$ and $t \leq 0$ cases, the first two equalities are immediate from the fact that $T$ is truncated  and \cite[{Proposition A.5.i}]{Beilinson}. The last equality follows from the fact that $F$ is truncated by Corollary \ref{Cor: Section Pseudocone Realizations: Truncations are neato} with truncation induced object-locally by the unique $t$-structure on $F(X)$ compatible with the $t$-structure on $T(X)$.
\end{proof}

We now want to build a cohomological functor $H_{\fil}:\PC(F) \to \PC(\Ch^b(T^{\heartsuit}))$ by pseudoconifying the construction of the functor $H_{\fil}$ of \cite[{Proposition A.5.b}]{Beilinson}. Because those functors allow us to construct cohomological functors
\[
\quot{H_{\fil}}{X}:F(X) \to \Ch^b\left(T(X)^{\heartsuit}\right)
\]
for all objects $X$ of $\Cscr$, our primary obstructions in defining the functor $H_{\fil}:\PC(F) \to \PC\left(\Ch^b(T)^{\heartsuit}\right)$ comes down to showing that the differentials of the $\quot{H_{\fil}}{X}$ are compatible with pseudonatural information, as well as that they are suitably natural. To this end we first recall the construction of the cohomological functors
\[
\quot{H_{\fil}}{X}:F(X) \to \Ch^b\left(T(X)^{\heartsuit}\right).
\]
\begin{proposition}[{\cite[{Proposition A.5.b}]{Beilinson}}]\label{Prop: Section Pseudocone Realizations: Beilinsion A5b}
Assume that $\Cscr$ is a filtered triangulated category over a triangulated category $\Ascr$ and assume that they have compatible $t$-structures with hearts $\Cscr^{\heartsuit}$ and $\Ascr^{\heartsuit}$, respectively. Then there is a cohomological functor $H_{\fil}:\Cscr \to \Ch^b(\Ascr^{\heartsuit})$ such that $H_{\fil}|_{\Cscr^{\heartsuit}}$ is an equivalence of categories.
\end{proposition}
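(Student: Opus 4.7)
The plan is to follow Beilinson's original construction, which is the template the later pseudoconical versions of this section are modelled on. For each $n \in \Z$, I would define the \emph{$n$-th filtered cohomology functor} $H^n_\fil : \Cscr \to \Ascr^{\heartsuit}$ by
$$H^n_\fil(A) := H^n\bigl(\gr_\fil^n(A)\bigr),$$
where $H^n$ denotes the $n$-th cohomology functor on $\Ascr$ associated to its $t$-structure and $\gr_\fil^n = j \circ s^{-n} \circ \Sigma_{\fil \leq n} \circ \Sigma_{\fil \geq n}$ is the usual $n$-th graded component functor (the one-category version of Lemma \ref{Lemma: Section Pseudocone Realization: Graded PC functor}, using the $\Sigma$ notation from Remark \ref{Remark: The Cap sigma notation}). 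Since each $\gr_\fil^n$ is triangulated and $H^n$ is cohomological, every $H^n_\fil$ is a cohomological functor $\Cscr \to \Ascr^{\heartsuit}$.

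Next I would construct the differentials. The non-pseudoconical version of Proposition \ref{Prop: Section Pseudocone Realizations: Dist triangle pseudocone version} produces for every $A \in \Cscr_0$ and every $n \in \Z$ a distinguished triangle
$$\Sigma_{\fil \geq n+1}(A) \longrightarrow \Sigma_{\fil \geq n}(A) \longrightarrow \gr_\fil^n(A) \xrightarrow{\;\delta^n_A\;} \Sigma_{\fil \geq n+1}(A)[1]$$
in $\Cscr$. Post-composing $\delta^n_A$ with the canonical map $\Sigma_{\fil \geq n+1}(A)[1] \to \gr_\fil^{n+1}(A)[1]$ and then applying $H^n$ (equivalently, $H^{n+1}$ to the desuspension) yields a morphism $d^n_A : H^n_\fil(A) \to H^{n+1}_\fil(A)$ in $\Ascr^{\heartsuit}$. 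The identity $d^{n+1} \circ d^n = 0$ is the standard consequence of pasting two consecutive triangles of this form, expressing the vanishing of the composite of two successive connecting maps in the cohomology long exact sequence of a filtration. Boundedness follows from the exhaustivity axiom $\bigcup_n \Cscr^{\fil \geq n} = \Cscr = \bigcup_n \Cscr^{\fil \leq n}$: each $A$ lies in some $\Cscr^{\fil \leq m} \cap \Cscr^{\fil \geq k}$, and then $\gr_\fil^n(A) = 0$ whenever $n < k$ or $n > m$. Naturality of $\gr_\fil^n$ and of $\delta^n$ in $A$ gives functoriality of $H_\fil : \Cscr \to \Ch^b(\Ascr^{\heartsuit})$, while the cohomological property is inherited from each $H^n_\fil$.

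To establish that $H_\fil|_{\Cscr^{\heartsuit}}$ is an equivalence, I would use the compatibility of $t$-structures. By the non-pseudoconical analogue of Proposition \ref{Prop: Section Pseudocone Realizations: The nice description of stuffs for truncated and filtered}, an object $A$ lies in $\Cscr^{\heartsuit}$ precisely when $\gr_\fil^n(A) \in \Ascr^{t = n}$ for every $n$, so $H^n_\fil(A)$ is canonically isomorphic to the unique shift of $\gr_\fil^n(A)$ into $\Ascr^{\heartsuit}$. Fullness and faithfulness on $\Cscr^{\heartsuit}$ then follow by induction on filtration length, using the Hom-isomorphism of Proposition \ref{Prop: Section Pseudocone Realization: Omega functors in Beilinson}(4) to reduce morphisms between filtered objects to compatible families of morphisms between graded pieces. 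The hardest step is essential surjectivity: given a bounded complex $(C^\bullet, d^\bullet)$ in $\Ascr^{\heartsuit}$, one must manufacture an object of $\Cscr^{\heartsuit}$ realising it, which I would do by induction on the length of the complex, extending a partial filtration by taking mapping cones along the morphisms $i(d^n[-n-1])$. The main obstacle will be verifying at each inductive step that the newly glued object still lies in $\Cscr^{\heartsuit}$ and has the predicted graded pieces; this is exactly the role of the compatibility condition $s(\Cscr^{t \leq 0}) \simeq \Cscr^{t \leq -1}$ from Definition \ref{Defn: Section Pseudocone Realization: Beilinson A4}, which ensures that shifting filtration by $s$ shifts $t$-degree by $-1$ so that the filtration-by-$s$ and $t$-structure-by-$[-1]$ bookkeeping agree throughout the induction.
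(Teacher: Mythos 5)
Your construction is essentially the one the paper gives (and is Beilinson's own): set $H^n_\fil = H^n\circ\gr_\fil^n$, obtain differentials from the connecting morphisms of distinguished triangles relating consecutive graded pieces, and invoke $t$-structure/filtration compatibility for the heart equivalence. One imprecision worth cleaning up: the displayed triangle cannot literally live in $\Cscr$, since its third term $\gr_\fil^n(A)$ is by your own formula $j\circ s^{-n}\circ\Sigma_{\fil\leq n}\circ\Sigma_{\fil\geq n}(A)$, an object of $\Ascr$; the triangle actually supplied in $\Cscr$ has $\Sigma_{\fil\leq n}\Sigma_{\fil\geq n}(A)$ in that slot, and similarly your ``canonical map $\Sigma_{\fil\geq n+1}(A)[1]\to\gr_\fil^{n+1}(A)[1]$'' crosses the $\Cscr$--$\Ascr$ boundary and needs $\omega$ (or $j\circ s^{-(n+1)}$) inserted before it is a morphism at all. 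The paper sidesteps this by first applying $\Sigma_{\fil\leq n+1}$ to the basic triangle so that all four terms sit inside $\Cscr^{\fil\leq n+1}$, then transporting the whole triangle to $\Ascr$ at once via $\omega$; once that bookkeeping is corrected your differential agrees with the paper's. On the equivalence $H_\fil|_{\Cscr^{\heartsuit}}$ the paper merely writes ``routine and omitted,'' so your mapping-cone sketch of essential surjectivity is extra detail rather than a discrepancy.
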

\begin{proof}[Sketch]
We provide a sketch of the proposition because we will need to use it later. If $X$ is an object of $\Cscr$ then the objects of the cochain complex $(H_{\fil}(X),\partial)$ are given by defining
\[
H_{\fil}(X)^{n} := H^n\left(\gr_{\fil}^{n}(X)\right)
\]
where $H:\Ascr \to \Ascr^{\heartsuit}$ is the cohomology functor induced by the $t$-structure on $\Ascr$. The differentials $\partial_n:H_{\fil}(X)^{n} \to H_{\fil}(X)^{n+1}$ are induced as follows. For each $n \in \Z$, consider the object $\left(\Sigma_{\fil \geq n+1} \circ \Sigma_{\fil \leq n} \right)(X);$ cf.\@ Remark \ref{Remark: The Cap sigma notation} for the definition of the $\Sigma$ functors. By proceeding as in the usual $t$-structure case by using the adjoint calculus as in \cite{BBD}, we can show that for all natural numbers $m, n \in \N$ there are natural isomorphisms 
\begin{align}
\Sigma_{\fil\leq n} &\cong	\Sigma_{\fil\leq n} \circ \Sigma_{\fil\leq m}\label{Eqn: Leq niso} \\
\Sigma_{\fil\geq m} &\cong \Sigma_{\fil\geq m}\ \circ \Sigma_{\fil\geq n}\label{Eqn: Geq niso} \\
\Sigma_{\fil\geq n} \circ \Sigma_{\fil\leq m} &\cong \Sigma_{\fil\leq m} \circ \Sigma_{\fil \geq n}.\label{Eqn: Mixed niso}
\end{align} 
whenever $n \leq m$. Now, apply \cite[{Proposition A.3.ii}]{Beilinson} to give a distinguished triangle
\[
\begin{tikzcd}
\left(\Sigma_{\fil \leq n+1} \circ \Sigma_{\fil \geq n+1}\right)(X) \ar[r]{}{} & \left(\Sigma_{\fil \leq n+1} \circ \Sigma_{\fil\geq n}\right)(X) \ar[d] \\ 
\left(\Sigma_{\fil \leq n+1} \circ \Sigma_{\fil \geq n+1}\right)(X)[1] & \left(\Sigma_{\fil \leq n} \circ \Sigma_{\fil\geq n}\right)(X) \ar[l]{}{d}
\end{tikzcd}
\]
where we have implicitly used the various natural isomorphisms in Equations \ref{Eqn: Leq niso}, \ref{Eqn: Geq niso}, \ref{Eqn: Mixed niso} in various places. Now applying the functor $\omega$ gives a distinguished triangle
\[
\begin{tikzcd}
	\left(\omega \circ\Sigma_{\fil \leq n+1} \circ \Sigma_{\fil \geq n+1}\right)(X) \ar[r]{}{} & \left(\omega \circ\Sigma_{\fil \leq n+1} \circ \Sigma_{\fil\geq n}\right)(X) \ar[d] \\ 
	\left(\omega \circ\Sigma_{\fil \leq n+1} \circ \Sigma_{\fil \geq n+1}\right)(X)[1] & \left(\omega \circ\Sigma_{\fil \leq n} \circ \Sigma_{\fil\geq n}\right)(X) \ar[l]{}{\omega(d)}
\end{tikzcd}
\]
which, after manipulating the adjunctions $\omega$ satisfies with the various  inclusion functors, gives rise to a distinguished triangle
\[
\begin{tikzcd}
\left(j \circ s^{-n} \circ\Sigma_{\fil \leq n+1} \circ \Sigma_{\fil \geq n+1}\right)(X) \ar[r]{}{} & \left(j \circ s^{-n} \circ\Sigma_{\fil \leq n+1} \circ \Sigma_{\fil\geq n}\right)(X) \ar[d] \\ 
\left(j \circ s^{-n} \circ\Sigma_{\fil \leq n+1} \circ \Sigma_{\fil \geq n+1}\right)(X)[1] & \left(j \circ s^{-n} \circ\Sigma_{\fil \leq n} \circ \Sigma_{\fil\geq n}\right)(X) \ar[l]{}{\widetilde{d}}
 \end{tikzcd}
\]
are in $\Ascr$. Consequently, applying the cohomology functor $H^{n}$ gives rise to the differential
\[
\partial_n = H^{n}(\widetilde{d}):H_{\fil}(X)^n \to H_{\fil}(X)^{n+1}
\]
as the connecting morphism of the corresponding long exact cohomology sequence. That these maps are differentials is then routine as they come from the long exact sequence. Finally the statement regarding $H_{\fil}|_{\Cscr^{\heartsuit}}$ being an equivalence is routine and omitted.
\end{proof}

Note that by Lemma \ref{Lemma: Section Pseudocone Realization: Graded PC functor} and Propositions \ref{Prop: Section Pseudocone Realizations: Dist triangle pseudocone version} and \ref{Prop: Section Pseudocone Realization: Pseudocone Version of Beil omega}, we know that the constructions used above can extended to the pseudoconical situation. In particular, we can use the functors $H_{\fil}$ to construct a pseudonatural transformation that lifts to a cohomological functor $\PC(F) \to \PC(\Ch^b(T^{\heartsuit}))$. Note that technically speaking we should check that $\Ch^b(T^{\heartsuit})$ determines a pseudofunctor, but this is routine by virtue of the fact that $T^{\heartsuit}$ is a pseudofunctor and the $\Ch^b(-)$ construction is (strictly) functorial with its assignment on morphisms given by prolongation.
\begin{proposition}\label{Prop: Section Pseudocone Realization: Cohomological functor but now its pseudonat}
Let $F:\Cscr^{\op} \to \fCat$ be a filtered triangulated pseudofunctor over triangulated pseudofunctor $T:\Cscr^{\op} \to \fCat$ and assume that $T$ and $F$ are truncated with compatible $t$-structures. Then there is a pseudonatural transformation
\[
\begin{tikzcd}
\Cscr^{\op} \ar[rr, bend left = 20, ""{name = U}]{}{F} \ar[rr, bend right = 20, swap, ""{name = D}]{}{\Ch^b(T^{\heartsuit})} & & \fCat \ar[from = U, to = D, Rightarrow, shorten <= 4pt, shorten >= 4pt]{}{\ul{H}_{\fil}}
\end{tikzcd}
\]
such that each object functor $\quot{\ul{H}_{\fil}}{X}$ is a cohomological functor.
\end{proposition}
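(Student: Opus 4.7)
The plan is to build $\ul{H}_{\fil}$ object-locally first, using Proposition~\ref{Prop: Section Pseudocone Realizations: Beilinsion A5b} applied fibrewise, and then promote this collection of functors to a pseudonatural transformation by exploiting the pseudonaturality of the data ($\ul{\gr}_{\fil}^{n}$, $\ul{\omega}$, $\ul{s}$, $\ul{\sigma}_{\fil\leq n}$, $\ul{\sigma}_{\fil\geq n}$, and $\ul{\incl}_{\fil\leq n}$, $\ul{\incl}_{\fil\geq n}$) that went into producing the local functors. For each $X \in \Cscr_0$, set $\quot{\ul{H}_{\fil}}{X} := \quot{H_{\fil}}{X}$ to be the cohomological functor given by Proposition~\ref{Prop: Section Pseudocone Realizations: Beilinsion A5b} for the filtered triangulated category $F(X)$ over $T(X)$. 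In particular the $n$-th cochain component is $H^n \circ \quot{\ul{\gr}_{\fil}^{n}}{X}$, and the differential $\partial_n$ is obtained from the distinguished triangle of Proposition~\ref{Prop: Section Pseudocone Realizations: Dist triangle pseudocone version} by applying $\quot{j}{X} \circ \quot{s^{-n}}{X} \circ \quot{\Sigma_{\fil\leq n+1}}{X}(-)$ and then $H^n$.

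Next I would construct the pseudonatural commutativity witnesses. For any $f\colon X \to Y$ in $\Cscr$, the cochain-component functor $H^n \circ \quot{\ul{\gr}_{\fil}^{n}}{-}$ is a composite of components of pseudonatural transformations ($\ul{\gr}_{\fil}^{n}$, and the standard cohomology $H^n$ which assembles from $\ul{\tau}_{\leq 0}$, $\ul{\tau}_{\geq 0}$ supplied by Corollary~\ref{Cor: Section Pseudocone Realizations: Truncations are neato} on $T$); hence there is a canonical invertible $2$-cell
\[
\begin{tikzcd}
F(Y) \ar[rr, ""{name = U}]{}{F(f)} \ar[d, swap]{}{H^n \circ \quot{\ul{\gr}_{\fil}^{n}}{Y}} & & F(X) \ar[d]{}{H^n \circ \quot{\ul{\gr}_{\fil}^{n}}{X}} \\
T(Y)^{\heartsuit} \ar[rr, swap, ""{name = D}]{}{T^{\heartsuit}(f)} & & T(X)^{\heartsuit} \ar[from = U, to = D, Rightarrow, shorten <= 4pt, shorten >= 4pt]{}{}
\end{tikzcd}
\]
in degree $n$. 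Assembling these degreewise yields a candidate natural isomorphism $\quot{\ul{H}_{\fil}}{f}\colon \Ch^b(T^{\heartsuit}(f)) \circ \quot{\ul{H}_{\fil}}{Y} \Rightarrow \quot{\ul{H}_{\fil}}{X} \circ F(f)$; but to make this a morphism in $\Ch^b(T(X)^{\heartsuit})$ we must verify that these degreewise isomorphisms commute with the differentials $\partial_n$. This is the main obstacle.

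To discharge that obstacle I would trace the construction of $\partial_n$ through the pseudonatural framework already developed. The differential arises from the connecting morphism of the distinguished triangle of Proposition~\ref{Prop: Section Pseudocone Realizations: Dist triangle pseudocone version} applied to an object, after composing with $\quot{j}{X} \circ \quot{s^{-n}}{X} \circ \quot{\Sigma_{\fil\leq n+1}}{X}$; by the second half of Proposition~\ref{Prop: Section Pseudocone Realizations: Dist triangle pseudocone version}, these triangles are strictly compatible with the transition isomorphisms $\tau_f^{A}$ and the pasting data of the $\Sigma$-functors. Combining this with the pseudonaturality of $\ul{j}$, $\ul{s}^{-n}$, and $\ul{\omega}$ (Proposition~\ref{Prop: Section Pseudocone Realization: Pseudocone Version of Beil omega}), and the fact that $T^{\heartsuit}(f)$ is exact (hence preserves connecting morphisms in long exact cohomology sequences since $T$ is truncated and $T(f)$ is $t$-exact), the diagram
\[
\begin{tikzcd}
T^{\heartsuit}(f)\bigl(\quot{H_{\fil}}{Y}(A)^{n}\bigr) \ar[r]{}{\partial_n} \ar[d, swap]{}{\cong} & T^{\heartsuit}(f)\bigl(\quot{H_{\fil}}{Y}(A)^{n+1}\bigr) \ar[d]{}{\cong} \\
\quot{H_{\fil}}{X}(F(f)A)^{n} \ar[r, swap]{}{\partial_n} & \quot{H_{\fil}}{X}(F(f)A)^{n+1}
\end{tikzcd}
\]
commutes. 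Hence the degreewise isomorphisms constitute a genuine chain map $\quot{\ul{H}_{\fil}}{f}$. The cocycle/coherence identity then follows since each constituent pseudonatural transformation satisfies it; concretely, both pasting diagrams associated to a composable pair $X \xrightarrow{f} Y \xrightarrow{g} Z$ reduce, after the above compatibility check in each degree, to the compositors $\phi_{f,g}$ of the underlying pseudofunctor $F$. Finally, each $\quot{\ul{H}_{\fil}}{X} = \quot{H_{\fil}}{X}$ is cohomological by Proposition~\ref{Prop: Section Pseudocone Realizations: Beilinsion A5b}, which is exactly the last claim.
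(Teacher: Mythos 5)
Your proposal is correct and follows essentially the same route as the paper: define $\quot{\ul{H}_{\fil}}{X}$ fibrewise from Proposition~\ref{Prop: Section Pseudocone Realizations: Beilinsion A5b}, build each degree-$n$ component of the witness isomorphism by pasting the $2$-cells for $\ul{\gr}_{\fil}^{n}$ and $H^n_T$, and then verify compatibility with the differentials by comparing the two distinguished triangles furnished by Proposition~\ref{Prop: Section Pseudocone Realizations: Dist triangle pseudocone version} and invoking the $t$-exactness of $T(f)$ and the fact that $H^n_T$ is cohomological. You are slightly more explicit than the paper about the cocycle coherence for composable pairs, which the paper simply asserts as following from the constituent pseudonaturalities; the substance of the argument is otherwise identical.
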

\begin{proof}
For each object $X$ of $\Cscr$ define the functor $\quot{\ul{H}_{\fil}}{X}$ to be the cohomology functor
\[
\quot{H_{\fil}}{X}:F(X) \to \Ch^b\left(T(X)^{\heartsuit}\right)
\]
of Proposition \ref{Prop: Section Pseudocone Realizations: Beilinsion A5b}. To define the witness isomorphisms $\quot{\ul{H}_{\fil}}{f}$ for a morphism $f:X \to Y$ in $\Cscr$, we note that for every object $A$ of $T(Y)$, our desired natural isomorphism $(\quot{\ul{H}_{\fil}}{f})_{A}$ is an isomorphism in the category $\Ch^b(T(X)^{\heartsuit})$. As such, we must define isomorphisms $(\quot{\ul{H}_{\fil}}{f})_A^n$ for all $n \in \Z$ and then show that these commute with the differentials. To this end fix an $n \in \Z$ and define $(\quot{\ul{H}_{\fil}}{f})_{-}^n$ as follows. 

Consider that since $F$ is filtered over $T$, there is an invertible $2$-cell
\[
\begin{tikzcd}
F(Y) \ar[r, ""{name = U}]{}{\quot{\gr_{\fil}^n}{Y}} \ar[d, swap]{}{F(f)} & T(Y) \ar[d]{}{T(f)} \\
F(X) \ar[r, swap, ""{name = D}]{}{\quot{\gr_{\fil}^{n}}{X}} & T(X) \ar[from = U, to = D, Rightarrow, shorten <= 4pt, shorten >= 4pt]{}{\quot{\ul{\gr}_{\fil}^{n}}{f}}
\end{tikzcd}
\]
by Lemma \ref{Lemma: Section Pseudocone Realization: Graded PC functor}. Furthermore, since $T$ is truncated, there is an invertible $2$-cell
\[
\begin{tikzcd}
T(Y) \ar[r, ""{name = U}]{}{\quot{H_T^n}{Y}} \ar[d, swap]{}{T(f)} & T(Y)^{\heartsuit} \ar[d]{}{T(f)^{\heartsuit}} \\
T(X) \ar[r, swap, ""{name = D}]{}{\quot{H_T^n}{X}} & T(X)^{\heartsuit} \ar[from = U, to = D, Rightarrow, shorten <= 4pt, shorten >= 4pt]{}{\quot{H_T^n}{f}}
\end{tikzcd}
\]
because $T$ is truncated. Pasting the $2$-cells together gives the invertible $2$-cell:
\[
\begin{tikzcd}
F(Y) \ar[rr, ""{name = U}]{}{\quot{H_{T}^n}{Y} \circ \quot{\gr_{\fil}^n}{Y}} \ar[d, swap]{}{F(f)} & & T(Y)^{\heartsuit} \ar[d]{}{T(f)^{\heartsuit}} \\
F(X) \ar[rr, swap, ""{name = D}]{}{\quot{H_T^n}{X} \circ \quot{\gr_{\fil}^{n}}{X}} & & T(X)^{\heartsuit} \ar[from = U, to = D, Rightarrow, shorten <= 4pt, shorten >= 4pt]{}[description]{\quot{H_T^n}{f} \ast \quot{\gr_{\fil}^n}{f}}
\end{tikzcd}
\]
Now observe that we can rewrite the above $2$-cell as
\[
\begin{tikzcd}
	F(Y) \ar[rr, ""{name = U}]{}{(\quot{H_{\fil}}{Y})^n} \ar[d, swap]{}{F(f)} & & T(Y)^{\heartsuit} \ar[d]{}{T(f)^{\heartsuit}} \\
	F(X) \ar[rr, swap, ""{name = D}]{}{(\quot{H_{\fil}}{X})^n} & & T(X)^{\heartsuit} \ar[from = U, to = D, Rightarrow, shorten <= 4pt, shorten >= 4pt]{}[description]{\quot{H_T^n}{f} \ast \quot{\gr_{\fil}^n}{f}}
\end{tikzcd}
\]
which we then use to define the degree $n$-component of our candidate isomorphism: 
\[
(\quot{\ul{H}_{\fil}}{f})_{-}^n := \quot{H_T^n}{f} \ast \quot{\gr_{\fil}^n}{f}.
\]
Fix an object $A$ of $F(Y)$. Observe that by the construction of the distinguished triangles in the proof of Proposition \ref{Prop: Section Pseudocone Realizations: Dist triangle pseudocone version} used in the proof of Proposition \ref{Prop: Section Pseudocone Realizations: Beilinsion A5b}, the triangles
\[
\begin{tikzcd}
\left(\quot{\Sigma_{\fil \leq n+1}}{X} \circ \quot{\Sigma_{\fil \geq n+1}}{X}\right)\left(F(f)(A)\right) \ar[r]{}{} & \left(\quot{\Sigma_{\fil \leq n+1}}{X} \circ \quot{\Sigma_{\fil\geq n}}{X}\right)\left(F(f)(A)\right) \ar[d] \\ 
\left(\quot{\Sigma_{\fil \leq n+1}}{X} \circ \quot{\Sigma_{\fil \geq n+1}}{X}\right)\left(F(f)(A)\right)[1] & \left(\quot{\Sigma}{X}_{\fil \leq n} \circ \quot{\Sigma}{X}_{\fil\geq n}\right)\left(F(f)(A)\right) \ar[l]{}{\quot{d_{F(f)A}}{X}}
\end{tikzcd}
\]
and
\[
\begin{tikzcd}
	F(f)\left(\quot{\Sigma}{Y}_{\fil \leq n+1} \circ \quot{\Sigma}{Y}_{\fil \geq n+1}\right)(A) \ar[r]{}{} & \left(\quot{\Sigma}{Y}_{\fil \leq n+1} \circ \quot{\Sigma}{Y}_{\fil\geq n}\right)(A) \ar[d] \\ 
	\left(\quot{\Sigma}{Y}_{\fil \leq n+1} \circ \quot{\Sigma}{Y}_{\fil \geq n+1}\right)(A)[1] & \left(\quot{\Sigma}{Y}_{\fil \leq n} \circ \quot{\Sigma}{Y}_{\fil\geq n}\right)(A) \ar[l]{}{F(f)\quot{d_A}{Y}}
\end{tikzcd}
\]
are object-wise isomorphic with non-labeled arrows induced by the $\incl$ and $\sigma$ adjunctions, it follows that
%
%
the corresponding distinguished triangles are homotopic in $T(X)$ after applying the $\gr_{\fil}^n$ functors via $\quot{\gr_{\fil}^n}{f}$. However, this implies that because the functor $H_T^n$ is is a cohomology functor that we induce an isomorphism of long exact sequences and hence a commuting square:
\[
\begin{tikzcd}
\left(T(f) \circ \quot{H_{\fil}}{Y}\right)(A)^n \ar[rr]{}{\partial} \ar[d, swap]{}{(\quot{\ul{H}_{\fil}}{f})_A^n} & & \left(T(f) \circ \quot{H_{\fil}}{Y}\right)(A)^n \ar[d]{}{(\quot{\ul{H}_{\fil}}{f})^{n+1}_{A}} \\
\left(\quot{H_{\fil}}{X} \circ F(f)\right)(A)^n \ar[rr, swap]{}{\partial} & & \left(\quot{H_{\fil}}{X} \circ F(f)\right)(A)^{n+1}
\end{tikzcd}
\]
It thus follows that $\ul{H_{\fil}}$ is a pseudonatural transformation. That each corresponding functor $\quot{\ul{H}_{\fil}}{X}$ is a cohomological functor is the content of Proposition \ref{Prop: Section Pseudocone Realizations: Beilinsion A5b}.
\end{proof}
\begin{corollary}\label{Cor: Section Pseudocone Realization: The pseudocone equiv we want}
Let $F:\Cscr^{\op} \to \fCat$ be a filtered triangulated pseudofunctor over triangulated pseudofunctor $T:\Cscr^{\op} \to \fCat$ and assume that $T$ and $F$ are truncated with compatible $t$-structures. There is a pseudonatural equivalence
\[
\ul{H}_{\fil}|_{F^{\heartsuit}}:F^{\heartsuit} \Rightarrow \Ch^b\left(T^{\heartsuit}\right).
\]
\end{corollary}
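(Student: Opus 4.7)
The plan is to deduce this corollary directly from Proposition \ref{Prop: Section Pseudocone Realization: Cohomological functor but now its pseudonat} together with the equivalence statement in Proposition \ref{Prop: Section Pseudocone Realizations: Beilinsion A5b}: we restrict the pseudonatural transformation $\ul{H}_{\fil}$ to the sub-pseudofunctor $F^{\heartsuit}$ of $F$, observe that each object component of the restriction is already an equivalence of categories, and then invoke the standard bicategorical fact that a pseudonatural transformation between pseudofunctors is a pseudonatural equivalence if and only if each of its object components is an equivalence.

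First I would check that $F^{\heartsuit}$ is indeed a sub-pseudofunctor of $F$ to which $\ul{H}_{\fil}$ restricts. By Corollary \ref{Cor: Section Pseudocone Realizations: Truncations are neato} the pseudofunctor $F$ is truncated, so both $F^{t \leq 0}$ and $F^{t \geq 0}$ are pseudofunctors; because each fibre functor $F(f)$ is $t$-exact (and hence preserves the heart), the assignment $X \mapsto F(X)^{\heartsuit}$, $f \mapsto F(f)|_{F(Y)^{\heartsuit}}$ together with the restricted compositors inherited from $F$ defines a pseudofunctor $F^{\heartsuit}:\Cscr^{\op} \to \fCat$ which is a strictly full sub-pseudofunctor of $F$. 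Consequently the components $\quot{H_{\fil}}{X}$ and the commutativity witness isomorphisms $\quot{\ul{H}_{\fil}}{f}$ of Proposition \ref{Prop: Section Pseudocone Realization: Cohomological functor but now its pseudonat} restrict to a pseudonatural transformation $\ul{H}_{\fil}|_{F^{\heartsuit}}:F^{\heartsuit} \Rightarrow \Ch^b(T^{\heartsuit})$: the cocycle conditions descend automatically from the ambient pseudonatural transformation since $F^{\heartsuit}$ is embedded full-and-faithfully into $F$. By Proposition \ref{Prop: Section Pseudocone Realizations: Beilinsion A5b} each object component
\[
\quot{\ul{H}_{\fil}|_{F^{\heartsuit}}}{X} = \quot{H_{\fil}}{X}|_{F(X)^{\heartsuit}}:F(X)^{\heartsuit} \to \Ch^b\left(T(X)^{\heartsuit}\right)
\]
is an equivalence of categories.

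Finally I would promote this object-local data to a genuine pseudonatural equivalence in $\Bicat(\Cscr^{\op},\fCat)$. One picks, for every $X \in \Cscr_0$, a quasi-inverse $\quot{K}{X}$ to $\quot{\ul{H}_{\fil}|_{F^{\heartsuit}}}{X}$ together with unit and counit natural isomorphisms, and defines the commutativity witnesses for $\ul{K}$ by conjugating the isomorphisms $\quot{\ul{H}_{\fil}}{f}$ through these units and counits in the standard manner. The cocycle condition for $\ul{K}$ follows from that of $\ul{H}_{\fil}|_{F^{\heartsuit}}$ plus the triangle identities for each chosen adjoint equivalence. The main obstacle here is purely bookkeeping and in fact is already a known piece of $2$-categorical machinery, recorded (up to the obvious translation) in \cite[Theorem 4.1.17]{Monograph} and in more classical form in \cite{TwoDimCat}; invoking it closes the argument.
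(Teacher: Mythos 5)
Your proposal is correct and follows essentially the same route as the paper: restrict $\ul{H}_{\fil}$ to hearts using the $t$-exactness of each $F(f)$ (guaranteed by $F$ being truncated), invoke Proposition~\ref{Prop: Section Pseudocone Realizations: Beilinsion A5b} to get that each object component is an equivalence, and then conclude that a pseudonatural transformation with componentwise equivalences is a pseudonatural equivalence. The paper's own proof leaves the final promotion step implicit (``From the given equivalences of categories, the result follows''), whereas you spell out the construction of a quasi-inverse pseudonatural transformation $\ul{K}$; your appeal to \cite[Theorem 4.1.17]{Monograph} is a slight stretch since that result concerns adjunctions rather than equivalences per se, but the underlying two-dimensional machinery (adjoint equivalences assembling into a pseudonatural adjoint equivalence) is exactly what is needed and is standard, so the argument closes correctly.
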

\begin{proof}
Proposition \ref{Prop: Section Pseudocone Realizations: Beilinsion A5b} shows that for every object $X$ of $\Cscr$, the functor
\[
\quot{\ul{H}_{\fil}}{X}|_{F(X)^{\heartsuit}}:F(X)^{\heartsuit} \to \Ch^b\left(T(X)^{\heartsuit}\right)
\]
is an equivalence of categories. However, since $F$ is truncated and hence each fibre functor $F(f)$ is $t$-exact, it follows that the pseudonatural transformation $\ul{H}_{\fil}$ restricts to a pseudonatural transformation 
\[
\ul{H}_{\fil}|_{F^{\heartsuit}}:F^{\heartsuit} \Rightarrow \Ch^b\left(T^{\heartsuit}\right).
\]
From the given equivalences of categories, the result follows.
\end{proof}
\begin{corollary}\label{Cor: Section Pseudocone Realization: The cohomology boi}
Let $F:\Cscr^{\op} \to \fCat$ be a filtered triangulated pseudofunctor over triangulated pseudofunctor $T:\Cscr^{\op} \to \fCat$ and assume that $T$ and $F$ are truncated with compatible $t$-structures. There is a cohomological functor
\[
H_{\fil}:\PC(F) \to \PC\left(\Ch^b\left(T^{\heartsuit}\right)\right)
\]
which restricts to an equivalence of categories $\PC(F)^{\heartsuit} \simeq \PC(\Ch^b(T^{\heartsuit}))$.
\end{corollary}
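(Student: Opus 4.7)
The plan is to take $H_{\fil} := \PC(\ul{H}_{\fil})$, where $\ul{H}_{\fil}$ is the pseudonatural transformation produced in Proposition \ref{Prop: Section Pseudocone Realization: Cohomological functor but now its pseudonat} and $\PC$ is the pseudoconification $2$-functor of \cite[Lemma 4.1.13]{Monograph}. Since $\PC$ is (strictly) functorial on $1$-cells of $\Bicat(\Cscr^{\op},\fCat)$, this immediately defines an honest functor $\PC(F) \to \PC(\Ch^b(T^{\heartsuit}))$ whose object functors are the $\quot{\ul{H}_{\fil}}{X}$.

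To see that $H_{\fil}$ is cohomological, we appeal to the construction of the triangulation on $\PC(F)$ in \cite[Theorem 5.1.10]{Monograph}: a candidate triangle in $\PC(F)$ is distinguished precisely when each of its object-local components is distinguished in $F(X)$. Given any distinguished triangle in $\PC(F)$, applying $H_{\fil}$ and then taking cohomology in degree $n$ is computed object-locally by first applying $\quot{\ul{H}_{\fil}}{X}$ to a distinguished triangle in $F(X)$ and then applying $H^n$ to the resulting bounded chain complex in $T(X)^{\heartsuit}$. By Proposition \ref{Prop: Section Pseudocone Realization: Cohomological functor but now its pseudonat} each $\quot{\ul{H}_{\fil}}{X}$ is cohomological, so this produces a long exact sequence in $T(X)^{\heartsuit}$ for each $X \in \Cscr_0$. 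The pseudonaturality of $\ul{H}_{\fil}$ and the strict functoriality of $H^n$ ensure that these local long exact sequences are compatible with the transition isomorphisms, so they assemble into a long exact sequence in $\PC(T^{\heartsuit})$.

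For the equivalence on hearts, we combine three observations. First, by the Change of Heart Theorem \cite[Theorem 5.1.24]{Monograph}, invoked earlier in this section to show $D_G^b(\Per(X))^{\heartsuit} \simeq \Per_G(X)$, we have a triangulated equivalence $\PC(F)^{\heartsuit} \simeq \PC(F^{\heartsuit})$, and the restriction of $H_{\fil}$ to $\PC(F)^{\heartsuit}$ is identified with $\PC(\ul{H}_{\fil}|_{F^{\heartsuit}})$ along this equivalence. Second, Corollary \ref{Cor: Section Pseudocone Realization: The pseudocone equiv we want} provides a pseudonatural equivalence $\ul{H}_{\fil}|_{F^{\heartsuit}} : F^{\heartsuit} \Rightarrow \Ch^b(T^{\heartsuit})$. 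Third, $\PC$ is a $2$-functor, so it sends pseudonatural equivalences to equivalences of categories, giving $\PC(F^{\heartsuit}) \simeq \PC(\Ch^b(T^{\heartsuit}))$. Composing these yields the claimed equivalence $\PC(F)^{\heartsuit} \simeq \PC(\Ch^b(T^{\heartsuit}))$.

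The main obstacle is bookkeeping rather than conceptual: one must verify that the cohomology of $H_{\fil}(A)$, computed within the pseudocone category $\PC(\Ch^b(T^{\heartsuit}))$, coincides object-locally with $H^n(\quot{\ul{H}_{\fil}}{X}(\quot{A}{X}))$ together with the expected transition isomorphisms. This reduces to the pseudonaturality of the degree-$n$ components $(\quot{\ul{H}_{\fil}}{f})^n$ already established in the proof of Proposition \ref{Prop: Section Pseudocone Realization: Cohomological functor but now its pseudonat}, together with the elementary fact that $H^n$ commutes strictly with the prolongation of fibre functors $T(f)^{\heartsuit}$ to bounded chain complexes.
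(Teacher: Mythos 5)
Your proof is correct and takes essentially the same approach as the paper: both define $H_{\fil} := \PC(\ul{H}_{\fil})$, deduce the cohomological property from the object-local cohomological functors $\quot{H_{\fil}}{X}$, and obtain the heart equivalence from Corollary \ref{Cor: Section Pseudocone Realization: The pseudocone equiv we want} together with the Change of Heart Theorem. You have merely spelled out the bookkeeping the paper leaves implicit.
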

\begin{proof}
The existence of the functor $H_{\fil}$ follows by setting $H_{\fil} := \PC(\ul{H}_{\fil})$, where $\ul{H}_{\fil}$ is the pseudonatural transformation of Proposition \ref{Prop: Section Pseudocone Realization: Cohomological functor but now its pseudonat}. That this is a cohomological functor follows from the fact that each $\quot{H_{\fil}}{X}$ is a cohomological functor. Finally, that this restricts to an equivalence is routine from the fact that $\ul{H}_{\fil}|_{F^{\heartsuit}}$ is an equivalence by Corollary \ref{Cor: Section Pseudocone Realization: The pseudocone equiv we want} and from the Change of Heart Theorem (cf.\@ \cite[{Theorem 5.1.24}]{Monograph}) giving that $\PC(F^{\heartsuit}) = \PC(F)^{\heartsuit}$.
\end{proof}

We now can proceed as in the remainder of \cite{Beilinson} to construct the pseudocone realization functor. Let $F:\Cscr^{\op} \to \fCat$ be a filtered triangulated pseudofunctor over triangulated pseudofunctor $T:\Cscr^{\op} \to \fCat$ and assume that $T$ and $F$ are truncated with compatible $t$-structures. Let $I:\Ch^b(T^{\heartsuit}) \to F^{\heartsuit}$ be an inverse pseudonatural equivalence to $H_{\fil}|_{F^{\heartsuit}}$. Define the pseudonatural transformation
\[
	\begin{tikzcd}
	\Ch^b\left(T^{\heartsuit}\right) \ar[drr, swap]{}{\widetilde{\rea}} \ar[r]{}{I} & F^{\heartsuit} \ar[r]{}{\incl} & F \ar[d]{}{\omega} \\
	& & T
	\end{tikzcd}
\]
and consider the composite
\[
\begin{tikzcd}
\Ch^b(T^{\heartsuit}) \ar[r]{}{\widetilde{\rea}} & T \ar[r]{}{H_T} & T^{\heartsuit}
\end{tikzcd}
\]
in $\Bicat(\Cscr^{\op},\fCat)$. Now define the subpseudofunctor $Q$ of $\Ch^b(T^{\heartsuit})$ defined as follows:
\begin{itemize}
	\item For each object $X$ of $\Cscr$, the category $Q(X) := {}^{t}\mathsf{QIso}(\Ch^b(T(X)^{\heartsuit}))$ is the category of quasi-isomorphisms of $\Ch^b(T(X)^{\heartsuit})$, i.e., the category whose objects are the objects of $\Ch^b(T(X)^{\heartsuit})$ but whose morphisms are precisely the quasi-isomorphisms of $\Ch^b(T(X)^{\heartsuit})$.
	\item For each morphism $f:X \to Y$ of $\Cscr$, the fibre functor $Q(f)$ is defined to be the prolongation $\Ch^b(T(f))$ as restricted to $Q(X)$. Because $T$ is truncated and so each functor $T(f)$ is $t$-exact, it follows that $\Ch^b(T(f))$ preserves quasi-isomorphisms.
	\item For each pair of composable morphisms $f:X \to Y$ and $g:Y \to Z$, define the compositor $\quot{\phi_{f,g}}{Q}$ to simply be the restriction of the compositor of $\Ch^b(T^{\heartsuit})$ to $Q$; because it is comprised of isomorphisms it is stable under the formation of $Q$.
\end{itemize}
Because each functor $T(f)$ is exact, we can also apply \cite[{Proposition 4.1.27}]{Monograph} to deduce that there is a localization pseudofunctor $Q^{-1}\Ch^b(T^{\heartsuit}).$ It is routine to check that this is pseudonaturally isomorphic to the pseudofunctor $D^b(T^{\heartsuit})$.

Observe that for each object $X$ of $\Cscr_0$ the functor $H_T \circ \widetilde{\rea}$ carries quasi-isomorphisms of $\Ch^b(T^{\heartsuit})$ to isomorphisms in $T$. Consequently by \cite[{Theorem 4.1.29}]{Monograph} there is a unique pseudonatural transformation
\[
\ul{\rea}:Q^{-1}\Ch^b\left(T^{\heartsuit}\right) \Rightarrow T
\]
for which the diagram
\[
\begin{tikzcd}
\Ch^b(T^{\heartsuit}) \ar[rr]{}{\widetilde{\rea}} \ar[dr, swap]{}{\ul{\lambda}_Q} & & T \\
 & Q^{-1}\Ch^b\left(T^{\heartsuit}\right) \ar[ur, swap]{}{\ul{\rea}}
\end{tikzcd}
\]
commutes strictly in $\Bicat(\Cscr^{\op},\fCat)$. This produces a pseudofunctor realization pseudonatural translation and completes our pseudofunctorification of \cite[{Appendix A}]{Beilinson}, but not our justification of calling it a realization pseudonatural transformation  or the pseudoconification of \cite[{Appendix A}]{Beilinson}. 
\begin{Theorem}\label{Thm: Section Pseudocone Realizations: Pseudofunctor Realization}
Let $F:\Cscr^{\op} \to \fCat$ be a filtered triangulated pseudofunctor over triangulated pseudofunctor $T:\Cscr^{\op} \to \fCat$ and assume that $T$ and $F$ are truncated with compatible $t$-structures. Then if $Q$ is the quasi-isomorphism subpseudofunctor of $\Ch^b(T^{\heartsuit})$, there is a unique pseudonatural transformation $\ul{\rea}:Q^{-1}\Ch^b(T^{\heartsuit}) \to T$ for which the diagram
\[
\begin{tikzcd}
	\Ch^b(T^{\heartsuit}) \ar[rr]{}{\widetilde{\rea}} \ar[dr, swap]{}{\ul{\lambda}_Q} & & T \\
	& Q^{-1}\Ch^b\left(T^{\heartsuit}\right) \ar[ur, swap]{}{\ul{\rea}}
\end{tikzcd}
\]
commutes in $\Bicat(\Cscr^{\op},\fCat)$. Furthermore, each object functor $\quot{\ul{\rea}}{X}$ is $t$-exact for all obects $X$ of $\Cscr$.
\end{Theorem}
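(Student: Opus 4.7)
The plan is straightforward given how much of the construction has been unpacked in the discussion preceding the theorem: the task reduces to verifying two things, namely that the pre-realization pseudonatural transformation $\widetilde{\rea}$ inverts quasi-isomorphisms fibrewise, and that the induced fibrewise factorizations assemble into a pseudonatural transformation which is itself $t$-exact. Both of these ultimately descend from the classical Beilinson setting, with \cite[Theorem 4.1.29]{Monograph} doing the higher-categorical heavy lifting in between.

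First I would unpack the assertion that $\widetilde{\rea}$ inverts quasi-isomorphisms. For each $X \in \Cscr_0$, the object functor $\quot{\widetilde{\rea}}{X} = \quot{\omega}{X} \circ \quot{\incl}{X} \circ \quot{I}{X}$ is Beilinson's pre-realization for the filtered triangulated category $F(X)$ over $T(X)$. By Corollary \ref{Cor: Section Pseudocone Realization: The pseudocone equiv we want} the restriction $\quot{\ul{H}_{\fil}}{X}|_{F(X)^{\heartsuit}}$ is an equivalence, so $\quot{I}{X}$ is its inverse up to natural isomorphism, whence the fibrewise identity $\quot{H_T^n}{X} \circ \quot{\widetilde{\rea}}{X} \cong (\quot{H_{\fil}}{X})^n$ of \cite[Appendix A]{Beilinson} holds. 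This forces $\quot{\widetilde{\rea}}{X}$ to send quasi-isomorphisms of $\Ch^b(T(X)^{\heartsuit})$ to isomorphisms in $T(X)$, so it factors uniquely through the fibrewise localization by a functor $\quot{\ul{\rea}}{X}:Q(X)^{-1}\Ch^b(T(X)^{\heartsuit}) \to T(X)$.

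Next I would invoke \cite[Theorem 4.1.29]{Monograph} to globalize. The universal property of the pseudofunctor localization $Q^{-1}\Ch^b(T^{\heartsuit})$ states that any pseudonatural transformation out of $\Ch^b(T^{\heartsuit})$ which inverts $Q$ fibrewise factors uniquely through $\ul{\lambda}_Q$; applying this to $\widetilde{\rea}$ yields the desired $\ul{\rea}$ together with the strict commutativity of the displayed triangle in $\Bicat(\Cscr^{\op},\fCat)$. The uniqueness clause of the theorem is precisely the uniqueness clause of \cite[Theorem 4.1.29]{Monograph}; in particular, the coherence data (witness isomorphisms and compositors) for $\ul{\rea}$ is forced by those of $\widetilde{\rea}$ together with the fibrewise universal property, so no separate verification is needed.

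Finally, the $t$-exactness of each $\quot{\ul{\rea}}{X}$ is Beilinson's original realization theorem \cite[Appendix A]{Beilinson}: the compatibility of the $t$-structures on $F(X)$ and $T(X)$ built into Definition \ref{Defn: Section Pseudocone Realization: Truncated compat} is exactly the hypothesis under which the classical realization $D^b(T(X)^{\heartsuit}) \to T(X)$ is $t$-exact, because the $t$-structure on $F(X)$ is calibrated via $\gr_{\fil}^n$ so that $\quot{\omega}{X}$ intertwines the filtered and classical $t$-structures. The hard part, if there is one, is not in any single step but in making sure the $2$-categorical bookkeeping around \cite[Theorem 4.1.29]{Monograph} is clean; however this is where the universal property pays off, since the uniqueness of the induced factorization at each fibre forces the compositors of $\ul{\rea}$ uniquely and removes all choices from the construction.
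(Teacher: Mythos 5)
Your proposal follows the same route as the paper's: both establish that $\widetilde{\rea}$ inverts quasi-isomorphisms fibrewise, invoke \cite[Theorem 4.1.29]{Monograph} for the unique pseudonatural factorization through $Q^{-1}\Ch^b(T^{\heartsuit})$, and import $t$-exactness of each $\quot{\ul{\rea}}{X}$ from Beilinson's classical realization result at each fibre, noting $\quot{Q}{X}^{-1}\Ch^b(T(X)^{\heartsuit}) \simeq D^b(T(X)^{\heartsuit})$. (Minor note: your displayed identity $\quot{H_T^n}{X} \circ \quot{\widetilde{\rea}}{X} \cong (\quot{H_{\fil}}{X})^n$ has mismatched domains — the right-hand side should be the ordinary $n$-th cohomology functor $\Ch^b(T(X)^{\heartsuit}) \to T(X)^{\heartsuit}$, not a component of $H_{\fil}$ — but the intended Beilinson identity and its use are clear.)
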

\begin{proof}
Everything in the statement of the theorem prior to the word ``furthermore'' was shown prior to stating the theorem. For the final statment, because each category $\quot{Q}{X}^{-1}\Ch^b(T(X)^{\heartsuit})$ is uniquely isomorphic to $D^b(T(X)^{\heartsuit})$, $\quot{\rea}{X}$ is uniquely isomorphic to the realization functor and so the result follows from \cite[{Section A.6}]{Beilinson}.
\end{proof}

Note that because for each object $X$ in $\Cscr$ the $X$-local component of the diagram in Theorem \ref{Thm: Section Pseudocone Realizations: Pseudofunctor Realization} is induced by the diagram
\[
\begin{tikzcd}
\Ch^b(T(X)^{\heartsuit}) \ar[rr]{}{\quot{\widetilde{\rea}}{X}} \ar[dr, swap]{}{\quot{\lambda}{X}} & & T(X)\\
 & \quot{Q^{-1}}{X}\Ch^b(T(X)^{\heartsuit}) \ar[ur, swap]{}{\quot{\rea}{X}}
\end{tikzcd}
\]
where $\quot{\rea}{X}$ is the realization functor constructed in \cite[{Section A.6}]{Beilinson} and $\quot{\widetilde{\rea}}{X}$ is the corresponding chain complex realization. Thus upon taking psuedocones of the diagram in Theorem \ref{Thm: Section Pseudocone Realizations: Pseudofunctor Realization} we get the pseudocone realization functor.

\begin{Theorem}\label{Thm: Section Pseudocone Realization: Pseudocone Realization}
Let $F:\Cscr^{\op} \to \fCat$ be a filtered triangulated pseudofunctor over triangulated pseudofunctor $T:\Cscr^{\op} \to \fCat$ and assume that $T$ and $F$ are truncated with compatible $t$-structures. Then if $Q$ is the quasi-isomorphism subpseudofunctor of $\Ch^b(T^{\heartsuit})$, there is a unique pseudonatural transformation $\ul{\rea}:Q^{-1}\Ch^b(T^{\heartsuit}) \to T$ for which the diagram
\[
\begin{tikzcd}
	\PC(\Ch^b(T^{\heartsuit})) \ar[rr]{}{\PC(\widetilde{\rea})} \ar[dr, swap]{}{\PC(\ul{\lambda}_Q)} & & \PC(T) \\
	& \PC(D^b\left(T^{\heartsuit}\right)) \ar[ur, swap]{}{\PC(\ul{\rea})}
\end{tikzcd}
\]
commutes in $\Bicat(\Cscr^{\op},\fCat)$ with $\PC(\ul{\rea})$ $t$-exact. Furthermore, for any obejct $A$ of $\PC(\Ch^b(T^{\heartsuit}))$,
\[
\PC(\ul{\rea})(A) = \left\lbrace \quot{\ul{\rea}}{X}(\quot{A}{X}) \; : \; X \in \Cscr_0\right\rbrace
\]
where $\quot{\ul{\rea}}{X}$ is the realization $D^b(T(X)^{\heartsuit}) \to T(X).$
\end{Theorem}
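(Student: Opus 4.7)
The plan is to derive this pseudoconical result as a straightforward consequence of the pseudofunctor realization theorem (Theorem \ref{Thm: Section Pseudocone Realizations: Pseudofunctor Realization}) by applying the strict pseudoconification $2$-functor $\PC$, and then to verify the three additional assertions (commutativity of the displayed diagram, $t$-exactness, and the explicit object-wise description) separately.

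First I would invoke Theorem \ref{Thm: Section Pseudocone Realizations: Pseudofunctor Realization} to obtain the unique pseudonatural transformation $\ul{\rea}:Q^{-1}\Ch^b(T^{\heartsuit}) \Rightarrow T$ making the corresponding triangle of pseudonatural transformations commute in $\Bicat(\Cscr^{\op},\fCat)$. Applying the pseudoconification $2$-functor $\PC$ of \cite[{Lemma 4.1.13}]{Monograph} to this triangle yields a strictly commuting diagram of functors $\PC(\widetilde{\rea})$, $\PC(\ul{\lambda}_Q)$, and $\PC(\ul{\rea})$; strict commutativity is preserved because $\PC$ is a strict $2$-functor. The identification of $\PC(Q^{-1}\Ch^b(T^{\heartsuit}))$ with $\PC(D^b(T^{\heartsuit}))$ comes from the remark just before the theorem that $Q^{-1}\Ch^b(T^{\heartsuit})$ is pseudonaturally isomorphic to $D^b(T^{\heartsuit})$, so $\PC$ identifies their pseudocone categories up to a canonical equivalence.

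Next I would deduce uniqueness of the induced functor: if $G:\PC(D^b(T^{\heartsuit})) \to \PC(T)$ made the pseudocone triangle commute, then object-locally the functors $\quot{G}{X}$ would provide a family witnessing the universal property of the local localization in each fibre; by the local uniqueness of $\quot{\ul{\rea}}{X}$ (from \cite[{Section A.6}]{Beilinson}) we would then deduce $\quot{G}{X} \cong \quot{\ul{\rea}}{X}$ pseudonaturally, and consequently $G = \PC(\ul{\rea})$. For $t$-exactness of $\PC(\ul{\rea})$, each object functor $\quot{\ul{\rea}}{X}$ is $t$-exact by the last sentence of Theorem \ref{Thm: Section Pseudocone Realizations: Pseudofunctor Realization}, and the $t$-structures on $\PC(D^b(T^{\heartsuit}))$ and $\PC(T)$ are locally induced by the corresponding fibre $t$-structures via \cite[{Theorem 5.1.21}]{Monograph}; thus $\PC(\ul{\rea})$ inherits $t$-exactness fibre-wise.

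Finally, for the explicit formula, I would use the explicit combinatorial description of $\PC$ recalled in Remark \ref{Remark: Pseudocone Section: Pseudocone explicit}: an object $A \in \PC(\Ch^b(T^{\heartsuit}))_0$ is a family $\lbrace \quot{A}{X} \; : \; X \in \Cscr_0 \rbrace$ equipped with cocycle-compatible transition isomorphisms, and the functor $\PC(\ul{\rea})$ is defined componentwise by applying $\quot{\ul{\rea}}{X}$ at each $X$ and transporting the transition isomorphisms along the commutativity witnesses $\quot{\ul{\rea}}{f}$. Writing this out gives
\[
\PC(\ul{\rea})(A) = \left\lbrace \quot{\ul{\rea}}{X}\left(\quot{A}{X}\right) \; : \; X \in \Cscr_0 \right\rbrace
\]
as claimed. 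The main obstacle, modest as it is, lies in the uniqueness step: one must confirm that object-local uniqueness of the realization lifts to uniqueness of the induced pseudoconical functor, which follows because $\PC$ is faithful on pseudonatural transformations in the sense that the values of $\PC(\ul{\alpha})$ on pseudocones determine $\ul{\alpha}$ up to isomorphism, a fact already implicitly used throughout this section.
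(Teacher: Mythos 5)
Your proof follows essentially the same route as the paper: apply $\PC$ to the strictly commuting triangle supplied by Theorem \ref{Thm: Section Pseudocone Realizations: Pseudofunctor Realization}, transport along the pseudonatural isomorphism $Q^{-1}\Ch^b(T^{\heartsuit}) \cong D^b(T^{\heartsuit})$, and deduce $t$-exactness and the object-wise formula for $\PC(\ul{\rea})$ from the corresponding fibre-wise statements. One caveat worth flagging: your uniqueness step reasons about object functors $\quot{G}{X}$ of an arbitrary competitor $G:\PC(D^b(T^{\heartsuit})) \to \PC(T)$, but an arbitrary functor between pseudocone categories need not arise as $\PC$ of a pseudonatural transformation and hence need not have such object-local components; fortunately the theorem only asserts uniqueness of the pseudonatural transformation $\ul{\rea}$, which is already given by Theorem \ref{Thm: Section Pseudocone Realizations: Pseudofunctor Realization}, so that step is both slightly misaimed and unnecessary.
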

\begin{proof}
Because for every object $X$ in $\Cscr$ there is a unique natural isomorphism $\quot{Q^{-1}}{X}\Ch^b(T(X)^{\heartsuit}) \cong D^b(T(X)^{\heartsuit})$ we get a corresponding pseudonatural isomorphism $D^b(T^{\heartsuit}) \cong Q^{-1}\Ch^b(T^{\heartsuit})$. This allows us to construct the listed pseudocone realization functor as well as prove that it is $t$-exact, as each functor $\quot{\ul{\rea}}{X}$ is $t$-exact where
\[
\quot{\rea}{X}:D^b(T^{\heartsuit}) \to T
\]
is the realization functor of \cite{Beilinson}. Finally the statement regarding $\PC(\ul{\rea})$ follows from \cite[{Theorem 4.1.1}]{Monograph}.
\end{proof}
\begin{corollary}\label{Cor: Section Pseudocone Realizations: The equivalence is id on heart}
For all $X$ in $\Cscr$,
\[
\quot{\ul{\rea}}{X}|_{T(X)^{\heartsuit}} = \id_{T(X)^{\heartsuit}}
\]
and
\[
\PC(\ul{\rea})(A) = A
\]
for all objects $A$ of $\PC(T^{\heartsuit})$ as embedded in $\PC(D^b(T^{\heartsuit}))$.
\end{corollary}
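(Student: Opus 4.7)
The plan is to reduce both statements to the classical fact that Beilinson's realization functor, after restriction to the heart, agrees with the identity (see \cite[{Section A.6}]{Beilinson}). Once the object-local first statement is established, the second follows immediately from the explicit pseudocone formula supplied by Theorem \ref{Thm: Section Pseudocone Realization: Pseudocone Realization}.

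For the first statement, I would unwind the construction of $\quot{\ul{\rea}}{X}$: it is obtained by factoring
\[
\quot{H_T}{X} \circ \quot{\widetilde{\rea}}{X} = \quot{H_T}{X} \circ \quot{\omega}{X} \circ \quot{\incl}{X} \circ \quot{I}{X}
\]
through the localization at quasi-isomorphisms, and up to the canonical identification $\quot{Q^{-1}}{X}\Ch^b(T(X)^{\heartsuit}) \simeq D^b(T(X)^{\heartsuit})$ this is exactly the realization functor of \cite[{Section A.6}]{Beilinson}. Fixing $B \in T(X)^{\heartsuit}$ and viewing it as a complex concentrated in degree $0$, the inverse equivalence $\quot{I}{X}$ can be normalized so that $\quot{I}{X}(B) = \quot{i}{X}(B)$, i.e.\@ $B$ placed in filtered degree $0$; property (4) of Proposition \ref{Prop: Section Pseudocone Realization: Omega functors in Beilinson} together with $\quot{j}{X} \circ \quot{i}{X} \cong \id_{T(X)}$ then yields $\quot{\omega}{X}(\quot{i}{X}(B)) = B$. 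This is precisely the statement in \cite[{Section A.6}]{Beilinson} that the realization restricts to the identity on the heart.

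For the second statement, let $A \in \PC(T^{\heartsuit})$ viewed inside $\PC(D^b(T^{\heartsuit}))$. By Theorem \ref{Thm: Section Pseudocone Realization: Pseudocone Realization}, the pseudocone $\PC(\ul{\rea})(A)$ has object components $\lbrace \quot{\ul{\rea}}{X}(\quot{A}{X}) \rbrace_{X \in \Cscr_0}$; since $\quot{A}{X} \in T(X)^{\heartsuit}$, the first part gives $\quot{\ul{\rea}}{X}(\quot{A}{X}) = \quot{A}{X}$. The transition isomorphisms of $\PC(\ul{\rea})(A)$ are produced by conjugating $\tau_f^A$ by the pseudonaturality witnesses $\quot{\ul{\rea}}{f}$, so I need to check that these witnesses restrict to identities on the embedded heart. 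This is the main obstacle: it requires verifying that each of the intermediate pseudonatural transformations ($\ul{I}$, $\ul{\omega}$, and the inclusions) can be normalized so as to restrict to the identity on $T^{\heartsuit}$, which is a bookkeeping exercise using the coherence axioms in Definition \ref{Defn: Section Pseudocone Realization: Truncated compat} together with the uniqueness (up to unique isomorphism) of adjoints. Once this is in place, the transition isomorphisms of $\PC(\ul{\rea})(A)$ coincide with those of $A$, and the equality $\PC(\ul{\rea})(A) = A$ holds on the nose.
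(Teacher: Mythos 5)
Your proposal follows the same route as the paper: the first claim is an appeal to Beilinson's result that the realization functor restricts to the identity on the heart (\cite[Section~A.6]{Beilinson}), and the second is read off from the explicit object formula for $\PC(\ul{\rea})$ supplied by Theorem~\ref{Thm: Section Pseudocone Realization: Pseudocone Realization}. You are in fact more careful than the paper's one-line proof, which only addresses the object components of $\PC(\ul{\rea})(A)$; you correctly observe that the strict equality $\PC(\ul{\rea})(A)=A$ also requires the pseudonaturality witnesses $\quot{\ul{\rea}}{f}$ to restrict to identities on the embedded heart (so that the transition isomorphisms match), and you flag the needed normalization of $\ul{I}$, $\ul{\omega}$, and the inclusion transformations as the remaining step, though you leave it as a coherence check rather than carrying it out.
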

\begin{proof}
The first claim follows from the fact that by \cite[Section A.6]{Beilinson} we have $\quot{\ul{\rea}}{X}|_{T(X)^{\heartsuit}} = \id_{T(X)^{\heartsuit}}$ while the second follows from how we calculate the object assignment of $\PC(\ul{\rea})$; see \cite[Theorem 4.1.1]{Monograph} for details.
\end{proof}

\section{Equivariant Beilinson's Theorem and Ext Functors}
	
We now can give a short proof of an equivariant version of Beilinson's theorem by using the technology of Section \ref{Section: Pseudocone Realization Functors} and in particular Theorem \ref{Thm: Section Pseudocone Realization: Pseudocone Realization} above.
	
\begin{Theorem}\label{Thm: Equivariant Beilinson}
The following equivalences of categories hold:
\begin{enumerate}
	\item Let $K$ be a field, $G$ a smooth algebraic group over $K$, and $X$ a left $G$-variety. If $R$ is any finite ring with characteristic coprime to the characteristic of $K$ then there is an equivalence of categories
	\[
	D^b_G(X;\RMod) \simeq D^b_G(\Per(X;\RMod)
	\]
	where $\Per(X;\RMod)$ denotes the category of perverse sheaves with coefficients in $\RMod$, $D^b_G(\Per(X;\RMod))$ is the equivariant derived category of perverse sheaves of $R$-modules.
	\item Let $K$ be an algebraically closed field, $G$ an algebraic group over $K$, and $X$ a left $G$-variety. Then
	\[
	D_G^b(X;\overline{\Q}_{\ell}) \simeq D_G^b(\Per(X;\overline{\Q}_{\ell})).
	\]
	\item If $G$ is a complex algebraic group and $X$ is a left $G$-variety over $\C$ then for any field $K$ there is an equivalence
	\[
	D_G^b(X(\C),\KVect) \simeq D_G^b(\Per(X(\C),\KVect)).
	\]
	\item If $K = \Fbb_q$ is a finite field, $G$ is an algebraic group over $K$, and $X$ is a left $G$-variety then for any prime $\ell$ coprime to $q$,
	\[
	D_{G,m}^b(X,\overline{\Q}_{\ell}) \simeq D_G^b(\Per_{m}(X;\overline{\Q}_{\ell})),
	\]
	where $D_{G,m}^b(X,\overline{\Q}_{\ell})$ is the equivariant derived category of mixed $\ell$-adic sheaves (and similarly in the perverse case).
\end{enumerate}
\end{Theorem}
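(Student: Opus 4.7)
The plan is to deduce each of the four equivalences as an application of Theorem~\ref{Thm: Section Pseudocone Realization: Pseudocone Realization} combined with the classical (non-equivariant) Beilinson Theorem. I will sketch the argument for case (2); cases (1), (3), and (4) are structurally identical, differing only in the variant of Beilinson's Theorem invoked at the final step and in the underlying pseudofunctor of Definition~\ref{Defn: Section Recall: EDC}.

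First, for each $\Gamma \times X$ in $\SfResl_G(X)$, take $T(\Gamma \times X) := D_c^b(G \backslash (\Gamma \times X); \overline{\Q}_\ell)$ with the perverse $t$-structure, so that $T$ is the pseudofunctor underlying Flavour~(3) of Definition~\ref{Defn: Section Recall: EDC}. By the results recalled in Section~\ref{Section: }, $T$ is a truncated triangulated pseudofunctor with heart the pseudofunctor $\Per(G \backslash (-);\overline{\Q}_\ell)$; in particular $\PC(T) = D_G^b(X;\overline{\Q}_\ell)$ and $\PC(D^b(T^\heartsuit)) = D_G^b(\Per(X;\overline{\Q}_\ell))$. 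The next step would be to construct a filtered triangulated pseudofunctor $F : \SfResl_G(X)^{\op} \to \fCat$ over $T$ whose truncation structure is compatible with the filtration in the sense of Definition~\ref{Defn: Section Pseudocone Realization: Truncated compat}. On fibres I would take $F(\Gamma \times X)$ to be Beilinson's filtered derived category $DF_c^b(G \backslash (\Gamma \times X);\overline{\Q}_\ell)$ of \cite[Appendix~A]{Beilinson}, equipped with the filtered perverse $t$-structure; fibre functors are the filtered $t$-exact pullbacks along the induced morphisms of quotients. The shift-of-filtration automorphism $s$, the transformation $\alpha$, the equivalence $T(\Gamma \times X) \simeq F(\Gamma \times X)^{\fil\geq 0} \cap F(\Gamma \times X)^{\fil\leq 0}$, and the truncation and inclusion adjunctions are all manufactured at the level of filtered complexes, and since pullback commutes with each of these constructions up to canonical isomorphism, they assemble into the pseudonatural data required by Definitions~\ref{Defn: Section Pseudocone Realizations: Filtered Pseudofunctor}, \ref{Defn: Section Pseudocone Realizations: fpseudofunctor}, and~\ref{Defn: Section Pseudocone Realization: Truncated compat}.

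Having produced $F$, Theorem~\ref{Thm: Section Pseudocone Realization: Pseudocone Realization} yields a $t$-exact functor
\[
\PC(\ul{\rea}) : D_G^b(\Per(X;\overline{\Q}_\ell)) \longrightarrow D_G^b(X;\overline{\Q}_\ell)
\]
whose object-local components are precisely the classical Beilinson realization functors
\[
\quot{\ul{\rea}}{\Gamma \times X} : D^b(\Per(G \backslash (\Gamma \times X); \overline{\Q}_\ell)) \longrightarrow D_c^b(G \backslash (\Gamma \times X); \overline{\Q}_\ell).
\]
By \cite[Theorem~1.3]{Beilinson} each such local functor is an equivalence of triangulated categories, so $\ul{\rea}$ is a pseudonatural equivalence. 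Since $\PC$ is a $2$-functor by \cite[Lemma~4.1.13]{Monograph}, it sends pseudonatural equivalences to equivalences of categories, and this gives~(2). Cases (1), (3), and~(4) follow by the identical recipe, substituting the appropriate torsion-coefficient, classical-topology, or mixed $\ell$-adic version of Beilinson's Theorem at the last step.

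The main obstacle is producing the filtered pseudofunctor $F$ over $T$: although each fibre is entirely classical, one must verify that the shift, inclusion, heart-intersection equivalence, and truncation data assemble into pseudonatural transformations and modifications satisfying the coherence axioms of Definitions~\ref{Defn: Section Pseudocone Realizations: fpseudofunctor} and~\ref{Defn: Section Pseudocone Realization: Truncated compat}. This amounts to careful bookkeeping of the interaction between filtered pullback and the natural isomorphisms produced by Beilinson's adjoint calculus; once this is in place, the descent from the local Beilinson equivalence to the equivariant statement via $\PC$ is formal.
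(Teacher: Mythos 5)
Your proposal follows the same route as the paper: reduce to the existence of a filtered triangulated pseudofunctor (with compatible truncation) over the pseudofunctor $D^b_c(G\backslash(-);\overline{\Q}_\ell)$ determining $D_G^b(X;\overline{\Q}_\ell)$, invoke Theorems~\ref{Thm: Section Pseudocone Realizations: Pseudofunctor Realization} and~\ref{Thm: Section Pseudocone Realization: Pseudocone Realization} to obtain the pseudocone realization functor whose object-local components are the classical Beilinson realizations, and then conclude by \cite[Theorem~1.3]{Beilinson} and the $2$-functoriality of $\PC$. One genuine improvement in your write-up: you correctly identify the filtered triangulated pseudofunctor $F$ as $DF_c^b(G\backslash(-);\overline{\Q}_\ell)$ (Beilinson's filtered derived category), whereas the paper's proof somewhat confusingly phrases the condition as ``$D^b(\Per(G\backslash(-);\overline{\Q}_\ell))$ is a filtered triangulated pseudofunctor over $D^b_c(G\backslash(-);\overline{\Q}_\ell)$'' — but $D^b(\Per(G\backslash(-);\overline{\Q}_\ell))$ plays the role of $D^b(T^\heartsuit)$ (the \emph{domain} of the realization), not of $F$; what is filtered over $T$ in \cite[Example~A.2]{Beilinson} is the filtered derived category, which is the object your $F$ correctly supplies. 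Otherwise the two arguments are the same: the smoothness of $G\backslash(f\times\id_X)$ is what makes the fibre functors filtered and $t$-exact, and the remaining coherence checks are exactly the ``careful bookkeeping'' you flag.
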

\begin{proof}
We prove the theorem explicitly only for Case (2); each of the other cases follows mutatis mutandis to the proof we present. Write 
\[
D^b(\Per(G \backslash (-);\overline{\Q}_{\ell})):\SfResl_G(X)^{\op} \to \fCat
\] 
for the pseudofunctor defining $D_G^b(\Per(X;\overline{\Q}_{\ell}))$ and
\[
D^b_c(G \backslash(-); \overline{\Q}_{\ell}):\SfResl_G(X)^{\op} \to \fCat
\]
for the pseudofunctor defining $D_G^b(X;\overline{\Q}_{\ell})$. By Theorems if we can prove that $D^b_G(\Per(-;\overline{\Q}_{\ell}))$ is obtained from a filtered triangulatedd pseudofunctor over the corresponding pseudofunctor determining $D_G^b(X;\overline{\Q}_{\ell})$, then by Theorems \ref{Thm: Section Pseudocone Realizations: Pseudofunctor Realization} and \ref{Thm: Section Pseudocone Realization: Pseudocone Realization} we have a pseudocone realization functor 
\[
\rea_G := \PC(\ul{\rea}):D_G^b(\Per(X);\overline{\Q}_{\ell}) \to D_G^b(X;\overline{\Q}_{\ell}).
\] 
Furthermore, by \cite[{Theorem 1.3}]{Beilinson} we know that each object functor $\quot{\ul{\rea}}{X}$ is an equivalence, so upon promoting it to an adoint equivalence if necessary we get our desired equivalence of categories. Consequently we simply need to verify that $D^b(\Per(G \backslash (-); \overline{\Q}_{\ell}))$ is a filtered triangulated pseudofunctor over $D^b_c(G \backslash (-);\overline{\Q}_{\ell})$ then we are done. 

We first show that $D^b(\Per(G \backslash (-);\overline{\Q}_{\ell})$ is a filtered triangulated pseudofunctor. However, each category $D^b(\Per(G \backslash (\Gamma \times X);\overline{\Q}_{\ell}))$ is filtered by \cite[Example A.2]{Beilinson} and for each morphism $f\times \id_X:\Gamma \times X \to \Gamma^{\prime} \times X$ in $\SfResl_G(X)$ the corresponding pullback functor
\[
\begin{tikzcd}
D^b\left(\Per\left(G \backslash (\Gamma^{\prime} \times X);\overline{\Q}_{\ell}\right)\right) \ar[d]{}{{}^{p}\left(G \backslash(f \times \id_X)\right)^{\ast}} \\
D^b\left(\Per\left(G \backslash(\Gamma \times X);\overline{\Q}_{\ell}\right)\right)
\end{tikzcd}
\]
is an $f$-functor because $G \backslash (f \times \id_X)$ is smooth\footnote{This implies that if $d_{f}$ is the relative dimension of $G \backslash (f \times \id_X)$, then $(G \backslash (f \times \id_X))^{\ast} \cong {}^{p}(G \backslash (f \times \id_X))^{\ast}[-d_f]$ and so is exact at the derived categorical level.}.

We now show that $D^b(\Per(G \backslash (-);\overline{\Q}_{\ell})$ is filterd over $D^b_c(G \backslash (-); \overline{\Q}_{\ell})$. Observe that for each smooth free $G$-variety $\Gamma$, $D^b(\Per(G \backslash (\Gamma \times X);\overline{\Q}_{\ell}))$ is filtered over $D^b_c(G \backslash (\Gamma \times X);\overline{\Q}_{\ell}))$ by \cite{Beilinson}. As such there are equivalences
\[
\begin{tikzcd}
D_c^b\left(G \backslash (\Gamma \times X); \overline{\Q}_{\ell}\right) \ar[d]{}{\quot{i}{\Gamma \times X}} \ar[d, swap]{}{\simeq} \\
D^b\left(\Per\left(G \backslash (\Gamma \times X); \overline{\Q}_{\ell}\right)\right)^{\fil \geq 0} \cap D^b\left(\Per\left(G \backslash (\Gamma \times X);\overline{\Q}_{\ell}\right)\right)^{\fil\leq 0}
\end{tikzcd}
\]
for all objects $\Gamma \times X$ in $\SfResl_G(X)$. We now only need to verify that these equivalences vary pseudonaturally in $\SfResl_G(X)^{\op}$.

Fix an $\SfResl_G(X)$ morphism $f \times \id_X:\Gamma \times X \to \Gamma^{\prime} \times X$. Because each morphism
\[
G \backslash (f \times \id_X):G\backslash(\Gamma \times X) \rightarrow G \backslash (\Gamma^{\prime} \times X)
\]
is smooth it follows that the ${}^{p}(G \backslash (f \times \id_X))^{\ast}$ are $f$-functors and so preserve the filtered degree $0$ component. Consequently there is a natural isomorphism $\quot{i}{f \times \id_X}$ of the form
\[
\quot{i}{\Gamma \times X} \circ \left(G \backslash (f \times \id_X)\right)^{\ast} \xRightarrow{\cong} {}^{p}\left(G \backslash (f \times \id_X)\right)^{\ast} \circ \quot{i}{\Gamma^{\prime} \times X}
\]
Checking that these vary pseudonaturally is routine and shows that the functors $\quot{i}{\Gamma \times X}$ constitute the object functors of a pseudonatural equivalence:
\[
\begin{tikzcd}
D^b_c\left(G \backslash (-); \overline{\Q}_{\ell}\right) \ar[d]{}{\ul{i}} \\
D^b\left(\Per(G \backslash (-);\overline{\Q}_{\ell})\right)^{\fil\geq 0} \cap D^b\left(\Per(G \backslash (-); \overline{\Q}_{\ell})\right)^{\fil\leq 0}
\end{tikzcd}
\]
Thus we conclude that $D^b(\Per(G \backslash (-);\overline{\Q}_{\ell}))$ is filtered triangulated over the pseudofunctor $D^b_c(G \backslash (-);\overline{\Q}_{\ell})$. From here, because we have shown that the pseudofunctor $D^b(\Per(G \backslash (-); \overline{\Q}_{\ell}))$ is filtered triangulated over $D^b_c(G \backslash (-);\overline{\Q}_{\ell})$, the theorem follows by the process described at the start of the proof.
\end{proof}
\begin{remark}
We use the definition of mixed sheaves as given in \cite{BBD}, as this is the setting that \cite{Beilinson} proved the corresponding equivalence of categories. While mixed sheaves have been defined for varieties over non-finite fields (cf., for instance, \cite{Huber}) and the technique here does likely carry over to this case, it is beyond the scope and aim of this paper to do this extension here. 
\end{remark}
\begin{remark}
Similarly to the mixed sheaf case, we only consider the categories $D_G^b(X;\overline{\Q}_{\ell})$ for varieties $X$ over algebraically closed fields $X$; this is simply so as to line up with Beilinson's \cite{Beilinson}[{Theorem 1.3}]. The techniques used in the proof below carry over to the case where $X$ is a variety over any field $K$ (by using the extension of Deligne's category of $\ell$-adic sheaves and $\ell$-adic perverse sheaves to arbitrary varities by way of \cite{Ekedahl}) and give an equivalence $D_G^b(X;\overline{\Q}_{\ell}) \simeq D^b_G(\Per(X;\overline{\Q}_{\ell}))$ provided we assume that $G$ is a smooth algebraic group.
\end{remark}
\begin{remark}
The reader familiar with \cite{Beilinson} will note that we are missing the equivariant analogue of the last statement of \cite[Theorem 1.3]{Beilinson}: the (suggested) equivalence of the equivariant derived category of equivariant holonomic $D$-modules and the equivariant derived category of perverse sheaves for a characteristic zero field. While I conjecture such an equivalence holds and the proof of Theorem \ref{Thm: Equivariant Beilinson} may be adapted to even prove ssaid euivalence, it is nothing more than my ignorance of the $D$-module formalism and the nature or existence of a candidate pseudofunctor of derived categories of complexes of $D$-modules with bounded holonomic cohomology that led to the absence of such a result here.
\end{remark}
\begin{corollary}\label{Cor: Section Equiv Beilin: EDC is EDPer for all var}
Let $K$ be a field, $G$ a smooth algebraic group over $K$, and let $X$ be a left $G$-variety. Then there is an equivalence of categories
\[
D_G^b(X;\overline{\Q}_{\ell}) \simeq D^b_G\left(\Per(X;\overline{\Q}_{\ell})\right).
\]
\end{corollary}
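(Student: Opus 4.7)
The plan is to follow the exact proof strategy used in Theorem~\ref{Thm: Equivariant Beilinson}(2), adapting it to the case where the base field $K$ need not be algebraically closed. The key input that needs to be swapped out is Deligne's construction of the $\ell$-adic formalism (which assumes an algebraically closed ground field); in its place one invokes Ekedahl's construction of the bounded constructible $\ell$-adic derived category and corresponding perverse $t$-structure for varieties over arbitrary fields. With that substitution made, all of the local ingredients of the proof of Theorem~\ref{Thm: Equivariant Beilinson}(2), and in particular Beilinson's original equivalence $D^b(\Per(Y;\overline{\Q}_\ell)) \simeq D_c^b(Y;\overline{\Q}_\ell)$ applied to each quotient variety $Y = G \backslash (\Gamma \times X)$, remain available.

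First, I would define the two pseudofunctors $D^b(\Per(G \backslash (-);\overline{\Q}_\ell))$ and $D^b_c(G \backslash (-);\overline{\Q}_\ell)$ on $\SfResl_G(X)^{\op}$ exactly as in Definitions~\ref{Defn: Section Recall: Pseudofunctor of derived perverse sheaves}(2) and~\ref{Defn: Section Recall: EDC}(3), but using Ekedahl's extension of the formalism rather than Deligne's. The smoothness hypothesis on $G$ enters here critically: for any morphism $f \times \id_X: \Gamma \times X \to \Gamma' \times X$ in $\SfResl_G(X)$, the induced morphism $G \backslash (f \times \id_X)$ of quotient varieties is smooth, so the pullback $\,{}^{p}(G \backslash (f \times \id_X))^{\ast}$ is $t$-exact and hence an $f$-functor in the sense of Beilinson. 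This shows that $D^b(\Per(G \backslash (-);\overline{\Q}_\ell))$ is a filtered triangulated pseudofunctor in the sense of Definition~\ref{Defn: Section Pseudocone Realizations: Filtered Pseudofunctor}.

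Next, I would verify that this pseudofunctor is filtered over $D^b_c(G \backslash (-);\overline{\Q}_\ell)$ in the sense of Definition~\ref{Defn: Section Pseudocone Realizations: fpseudofunctor}. The local equivalences
\[
\quot{i}{\Gamma \times X}: D_c^b(G \backslash (\Gamma \times X);\overline{\Q}_\ell) \xrightarrow{\simeq} D^b(\Per(G \backslash (\Gamma \times X);\overline{\Q}_\ell))^{\fil\geq 0} \cap D^b(\Per(G \backslash (\Gamma \times X);\overline{\Q}_\ell))^{\fil\leq 0}
\]
are provided by Beilinson's theorem applied at each stage (using Ekedahl's extension so that the base field need not be algebraically closed). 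Their pseudonatural coherence follows from the fact that each smooth pullback ${}^{p}(G \backslash (f\times\id_X))^{\ast}$ preserves the filtration degree $0$ subcategory and commutes with $\quot{i}{-}$ up to a canonical natural isomorphism, giving rise to the witness $2$-cells $\quot{i}{f \times \id_X}$ exactly as in the proof of Theorem~\ref{Thm: Equivariant Beilinson}(2). The compatibility of truncations with the filtered structure likewise transfers from the local situation, again via smoothness.

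With these ingredients in hand, Theorems~\ref{Thm: Section Pseudocone Realizations: Pseudofunctor Realization} and~\ref{Thm: Section Pseudocone Realization: Pseudocone Realization} produce a pseudocone realization functor
\[
\rea_G := \PC(\ul{\rea}): D_G^b(\Per(X;\overline{\Q}_\ell)) \to D_G^b(X;\overline{\Q}_\ell)
\]
whose object-wise component at each $\Gamma \times X$ is the classical Beilinson realization $D^b(\Per(G \backslash (\Gamma \times X);\overline{\Q}_\ell)) \to D_c^b(G \backslash (\Gamma \times X);\overline{\Q}_\ell)$, which is an equivalence of categories. Promoting each object functor to an adjoint equivalence (and assembling these coherently), we conclude that $\rea_G$ itself is an equivalence. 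The only real obstacle is verifying that Ekedahl's formalism is sufficiently functorial and triangulated to supply the local inputs and that Beilinson's realization construction goes through in that setting; once this is granted, everything else is a direct application of the pseudoconical machinery developed in Section~\ref{Section: Pseudocone Realization Functors}.
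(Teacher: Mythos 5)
Your proposal takes essentially the same approach the paper intends: the paper justifies this corollary via the remark immediately preceding it, which says that the proof of Theorem~\ref{Thm: Equivariant Beilinson}(2) carries over to arbitrary base fields once Deligne's $\ell$-adic formalism is replaced by Ekedahl's extension, provided $G$ is smooth so that the quotient maps $G \backslash (f \times \id_X)$ remain smooth. Your write-up simply makes explicit the steps the paper's remark compresses, and correctly identifies both the substitution (Ekedahl for Deligne) and the role of the smoothness hypothesis.
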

	
Based on the theorems and corollaries above, we can use Theorem \ref{Thm: Equivariant Beilinson} and the description of morphisms in pseudocone categories to give an isomorphism
\[
D^b_{G}(\Per(X;\overline{\Q}_{\ell}))(\Fscr,\Gscr[n]) \cong D^b_G(X;\overline{\Q}_{\ell})(\Fscr,\Gscr[n])
\]
for any two equivariant perverse sheaves $\Fscr$ and $\Gscr$.

\begin{proposition}\label{Prop: The Exty boi}
Let $\Fscr$ and $\Gscr$ be two equivariant perverse sehaves and let $n \in \Z$. Then there is an isomorphism
\[
\Ext^n_{\Per_G(X;\overline{\Q}_{\ell})}(\Fscr,\Gscr) \cong \Ext^n_{D_G^b(X;\overline{\Q}_{\ell})}(\Fscr,\Gscr).
\]
\end{proposition}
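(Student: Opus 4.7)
The plan is to reduce the proposition to the classical identification of Yoneda $\Ext$ groups in an abelian category with Hom sets in its bounded derived category, by first transporting the right-hand side to the equivariant derived category of perverse sheaves using Theorem \ref{Thm: Equivariant Beilinson}. Concretely, denote by $\rea_G:D_G^b(\Per(X;\overline{\Q}_{\ell})) \xrightarrow{\simeq} D_G^b(X;\overline{\Q}_{\ell})$ the equivariant realization equivalence of Theorem \ref{Thm: Equivariant Beilinson}. By Theorem \ref{Thm: Section Pseudocone Realization: Pseudocone Realization} this functor is $t$-exact, and by Corollary \ref{Cor: Section Pseudocone Realizations: The equivalence is id on heart} its object assignment on the heart $\Per_G(X;\overline{\Q}_{\ell})$ is the identity. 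Because any triangulated equivalence commutes with the shift, this gives
\[
\Ext^n_{D_G^b(X;\overline{\Q}_{\ell})}(\Fscr,\Gscr) = D_G^b(X;\overline{\Q}_{\ell})(\Fscr,\Gscr[n]) \cong D_G^b(\Per(X;\overline{\Q}_{\ell}))(\Fscr,\Gscr[n]).
\]

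The remaining step is to identify the right-hand side with $\Ext^n_{\Per_G(X;\overline{\Q}_{\ell})}(\Fscr,\Gscr)$. By the construction of the realization, Theorem \ref{Thm: Section Pseudocone Realization: Pseudocone Realization} provides a $t$-exact equivalence of $D_G^b(\Per(X;\overline{\Q}_{\ell}))$ with (the pseudocone analogue of) the bounded derived category of its heart, and the Change of Heart Theorem together with Corollary \ref{Cor: Section Pseudocone Realization: The cohomology boi} identifies this heart with $\Per_G(X;\overline{\Q}_{\ell})$ itself. Under this identification the classical formula $\Hom_{D^b(\Acal)}(A,B[n]) \cong \Ext^n_{\Acal}(A,B)$, valid for any abelian category $\Acal$ and objects $A,B \in \Acal$ with $\Ext$ computed as Yoneda extension classes, yields the desired isomorphism.

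The main obstacle will be making precise the comparison between Yoneda $\Ext$ in the pseudocone abelian category $\Per_G(X;\overline{\Q}_{\ell})$ and morphisms in the pseudocone triangulated category $D_G^b(\Per(X;\overline{\Q}_{\ell}))$, since both are defined as $\PC$ of fibrewise constructions rather than directly as derived categories of abelian categories. The cleanest way around this is to assemble the Yoneda-to-derived comparison map pseudoconically from its fibre-local instances: for each object $\Gamma \times X$ of $\SfResl_G(X)$ the classical isomorphism $\Ext^n_{\Per(G \backslash (\Gamma \times X);\overline{\Q}_{\ell})}(-,-) \cong D^b(\Per(G \backslash (\Gamma \times X);\overline{\Q}_{\ell}))(-,-[n])$ is an isomorphism of functors which is pseudonatural in $\Gamma \times X$ (because the fibre pullback functors are $t$-exact and preserve Yoneda extension classes). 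Applying $\PC$ and evaluating at the pseudocones $\Fscr$ and $\Gscr$ then upgrades this fibrewise identification to the global one and completes the argument.
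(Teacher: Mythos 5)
Your proof is correct and takes essentially the same route as the paper: reduce via the realization equivalence (which is the identity on the heart and $t$-exact, so it identifies $D_G^b(X)(\Fscr,\Gscr[n])$ with $D_G^b(\Per(X))(\Fscr,\Gscr[n])$), and then assemble the fibrewise Yoneda-to-derived-Hom comparison pseudoconically. The paper traverses the same circle in the opposite order --- starting from an explicit equivariant $n$-extension, translating it into the compatible family of morphisms $\quot{\rho}{\Gamma \times X}$ in the local derived categories using the cocycle condition, assembling these into a pseudocone morphism, and only then invoking the realization equivalence --- but the ingredients are identical. One small caveat: the middle paragraph's appeal to a ``$t$-exact equivalence of $D_G^b(\Per(X;\overline{\Q}_\ell))$ with the bounded derived category of its heart'' does not follow from the pseudocone realization theorem as stated, since that realization lands in $\PC(T)$ (i.e.\ $D_G^b(X)$), not in $D^b(\Per_G(X))$; you flag this yourself, and the honest argument is the one in your final paragraph, which coincides with the explicit fibrewise bookkeeping the paper performs with the transition-isomorphism squares.
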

\begin{proof}
Fix a length $n$-extension
\[
\begin{tikzcd}
0 \ar[r] & \Gscr \ar[r] & \Pscr_n \ar[r] & \cdots \ar[r] & \Pscr_1 \ar[r] & \Fscr \ar[r] & 0
\end{tikzcd}
\]
of equivariant perverse sheaves. Because we have an equivalence of categories $\Per_G(X;\overline{\Q}_{\ell}) \simeq \PC(\Per(G \backslash (-);\overline{\Q}_{\ell}))$ (cf.\@, for instance, \cite{PramodBook} or \cite{Monograph}) we can regard each perverse sheaf above as an object in a pseudocone category. Consequently, because of how exact sequences are determined in the Abelian category $\PC(\Per(G \backslash(-);\overline{\Q}_{\ell}))$ (cf.\@ \cite[Theorem 3.1.3, Proposition 3.1.12]{Monograph}) we have, for all objects $\Gamma \times X$ of $\SfResl_G(X)$, exact sequences
\[
\begin{tikzcd}
	0 \ar[r] & \quot{\Gscr}{\Gamma \times X} \ar[r] & \quot{\Pscr_n}{\Gamma \times X} \ar[r] & \cdots \ar[r] & \quot{\Pscr_1}{\Gamma \times X} \ar[r] & \quot{\Fscr}{\Gamma \times X} \ar[r] & 0
\end{tikzcd}
\]
of perverse sheaves in $\Per(G \backslash (\Gamma \times X);\overline{\Q}_{\ell})$. Note also that these are compatible with the transition isomorphisms $\tau_f$ in the sense that for all morphisms $f \times \id_X:\Gamma \times X \to \Gamma^{\prime} \times X$ we have commuting diagrams
\[
\begin{tikzcd} \\
 {}^{p}(G \backslash (f \times \id_X))^{\ast}(\quot{\Gscr}{\Gamma^{\prime} \times X}) \ar[d, swap]{}{\tau_f^{\Gscr}} \ar[r, swap] & \cdots \ar[r] & {}^{p}(G \backslash (f \times \id_X))^{\ast}(\quot{\Fscr}{\Gamma^{\prime} \times X}) \ar[d]{}{\tau_f^{\Fscr}} \\
	\quot{\Gscr}{\Gamma \times X} \ar[r] & \cdots \ar[r] & \quot{\Fscr}{\Gamma \times X} 
\end{tikzcd}
\]
with each induced square a commuting square; note that the $0$'s are omitted from the diagram so the image fits within the page margins. We then can translate each such $n$-extension to a morphism
\[
\quot{\rho}{\Gamma \times X}:\Fscr \to \quot{\Gscr[n]}{\Gamma}
\]
in $D^b(\Per(G \backslash (\Gamma \times X);\overline{\Q}_{\ell}))$ and each such morphism arises from a different isomorphism class of extensions. Now note that the maps $\tau_f^{\Fscr}$ and $\tau_f^{\Gscr}$ then give rise to isomorphisms making each corresponding diagram
\[
\begin{tikzcd}
{}^{p}(G \backslash (f \times \id_X))^{\ast}(\quot{\Fscr}{\Gamma^{\prime} \times X})  \ar[d, swap]{}{\tau_f^{\Fscr}} \ar[rrrr, swap]{}{{}^{p}(G \backslash (f \times \id_X))^{\ast}(\quot{\rho}{\Gamma^{\prime} \times X})} & & & & {}^{p}(G \backslash (f \times \id_X))^{\ast}(\quot{\Gscr}{\Gamma^{\prime} \times X}[n]) \ar[d]{}{\tau_f^{\Gscr[n]}} \\
\quot{\Fscr}{\Gamma \times X} \ar[rrrr, swap]{}{\quot{\rho}{\Gamma}} & & & & \quot{\Gscr[n]}{\Gamma}
\end{tikzcd}
\]
commute in $D^b(\Per(G \backslash (\Gamma \times X);\overline{\Q}_{\ell}))$. But this in turn means that we have a morphism $P = \lbrace \quot{\rho}{\Gamma \times X} \; | \; \Gamma \times X \in \SfResl_G(X)_0 \rbrace$ from $\Fscr$ to $\Gscr[n]$ and so, by applying the realization equivalence of Theorem \ref{Thm: Equivariant Beilinson} we get an isomorphism
\[
D_G^b\left(\Per(X;\overline{\Q}_{\ell})\right)(\Fscr,\Gscr[n]) \cong D^b_G(X;\overline{\Q}_{\ell})(\PC(\rea)(\Fscr), \PC(\rea)(\Gscr)[n]).
\]
This further implies that, because $\quot{\ul{\rea}}{X}(\quot{\Fscr}{\Gamma \times X}) = \quot{\Fscr}{\Gamma \times X}$ by Corollary \ref{Cor: Section Pseudocone Realizations: The equivalence is id on heart}, for each object $\Gamma \times X$ of $\SfResl_G(X)$, the existence of a morphism 
\[
\quot{\widetilde{\rho}}{\Gamma \times X}:\quot{\Fscr}{\Gamma \times X} \rightarrow \quot{\Gscr[n]}{\Gamma \times X}
\] 
in $D_c^b(G \backslash (\Gamma \times X); \overline{\Q}_{\ell})$ which is suitably compatible with the transition isomorphisms of $\Fscr$ and $\Gscr[n]$. This further implies, upon once again walking back the derived-hom/extension correspondence, that such a family of compatible morphisms gives rise to compatible exact sequences
\[
\begin{tikzcd} \\
	(G \backslash (f \times \id_X))^{\ast}(\quot{\Gscr}{\Gamma^{\prime} \times X}) \ar[d, swap]{}{\tau_f^{\Gscr}} \ar[r, swap] & \cdots \ar[r] & (G \backslash (f \times \id_X))^{\ast}(\quot{\Fscr}{\Gamma^{\prime} \times X}) \ar[d]{}{\tau_f^{\Fscr}} \\
	\quot{\Gscr}{\Gamma \times X} \ar[r] & \cdots \ar[r] & \quot{\Fscr}{\Gamma \times X} 
\end{tikzcd}
\]
of perverse sheaves as embedded in the usual constructible derived category of the quotient variety. Upon lifting this to an object in the pseudocone category we then get an $n$-extension
\[
\begin{tikzcd}
	0 \ar[r] & \Gscr \ar[r] & \Pscr_n \ar[r] & \cdots \ar[r] & \Pscr_1 \ar[r] & \Fscr \ar[r] & 0
\end{tikzcd}
\]
as computed in $D_G^b(X;\overline{\Q}_{\ell})$. This process may be reversed and gives rise to the claimed isomorphism.
\end{proof}

\printbibliography
	
\end{document}